\definecolor{darkblue}{rgb}{0.0,0,0.7} 
\newcommand{\darkblue}{\color{darkblue}} 
\definecolor{darkred}{rgb}{0.7,0,0} 
\definecolor{lightgrey}{rgb}{0.7,0.7,0.7} 
\def\Z{\mathbb{Z}}
\def\R{\mathbb{R}}
\def\simp{\Delta}
\def\wa{\widetilde{W}}
\def\ac{\mathcal{A}}
\def\Q{\check{Q}}
\def\bijQtoW{\mathsf{crt}}
\def\HH{\mathcal{H}}
\def\ar{\widetilde{\Phi}}
\def\h{\mathrm{ht}}
\def\cw{\check{\omega}}
\def\rhog{\frac{\rho}{g}}
\def\rhoh{\frac{\check{\rho}}{h}}
\def\quadr{Q}
\def\rat{b}
\newcommand{\cwl}{\check{\Lambda}}
\newcommand{\wex}{\wa_{\mathrm{ex}}}
\newcommand{\cycl}{\Omega}
\newcommand{\Expt}[2]{\operatorname*{\mathbb{E}}\limits_{#1}(#2)}
\newcommand{\Var}[2]{\operatorname*{\mathbb{V}}\limits_{#1}(#2)}
\newcommand{\core}{{\sf core}}
\newcommand{\pent}{{\sf pent}}
\newcommand{\bino}{{\sf binom3}}
\newcommand{\size}{{\sf size}}
\newcommand{\sizeshift}{{\sf zise}}
\newcommand{\zise}{{\sf zise}}
\newcommand{\Sommers}{\mathcal{S}}
\newcommand{\hgt}{{\sf ht}}
\newcommand{\comp}{{\sf comp}}
\newcommand{\lcm}{{\sf lcm}}
\newcommand{\inv}{{\sf inv}}
\newcommand{\bijtocore}{{\sf core}_{\mathfrak{S}}}
\newcommand{\bijqtocore}{{\sf core}_{\check{Q}}}
\renewcommand{\mod}{\operatorname{mod}}
\newcommand{\waf}{\widetilde{w}}
\newcommand{\wf}{\waf_{\rat}}
\newcommand{\affr}{\tilde{\alpha}}
\newcommand{\amax}{\tilde{\alpha}}
\newtheorem{theorem}{Theorem}[section]
\newtheorem{proposition}[theorem]{Proposition}
\newtheorem{corollary}[theorem]{Corollary}
\newtheorem{lemma}[theorem]{Lemma}
\theoremstyle{definition}
\newtheorem{definition}[theorem]{Definition}
\newtheorem{example}[theorem]{Example}
\newtheorem{conjecture}[theorem]{Conjecture}
\newtheorem{openproblem}[theorem]{Open Problem}
\newtheorem{remark}[theorem]{Remark}
\newcommand{\defn}[1]{\emph{\darkblue #1}}
\title[Strange Expectations]
  {Strange Expectations}
\author[M.~Thiel]{Marko Thiel}
\address[M.~Thiel]{Fakult\"{a}t f\"{u}r Mathematik, Universit\"{a}t Wien, Vienna, Austria}
\email{marko.thiel@univie.ac.at}
\author[N.~Williams]{Nathan Williams}
\address[N.~Williams]{LaCIM, Universit\'e de Qu\'ebec \`a Mont\'eal \\
Montr\'eal (Qu\'ebec), Canada}
\email{nathan.f.williams@gmail.com}
\date{\today}
\keywords{}
\subjclass[2000]{Primary 05E45; Secondary 20F55, 13F60}
\begin{document}
   
\begin{abstract}
Let $\gcd(a,b)=1$.  J.~Olsson and D.~Stanton proved that the maximum number of boxes in a simultaneous $(a,b)$-core is
	\[\max_{\lambda \in \core(a,b)} (\size(\lambda)) = \frac{(a^2-1)(b^2-1)}{24},\]
and that this maximum was achieved by a unique core.  P.~Johnson combined Ehrhart theory with the polynomial method to prove D.~Armstrong's conjecture that the expected number of boxes in a simultaneous $(a,b)$-core is
	\[\Expt{\lambda \in \core(a,b)}{\size(\lambda)} = \frac{(a-1)(b-1)(a+b+1)}{24}.\]
We extend P.~Johnson's method to compute the variance to be
    \[\Var{\lambda \in \core(a,b)}{\size(\lambda)} = \frac{ab(a-1)(b-1)(a+b)(a+b+1)}{1440}.\]
By extending the definitions of ``simultaneous cores'' and ``number of boxes'' to affine Weyl groups, we give uniform generalizations of all three formulae above to simply-laced affine types.  We further explain the appearance of the number $24$ using the ``strange formula'' of H.~Freudenthal and H.~de Vries.


%
\end{abstract}

\maketitle

\section{Introduction}
 

\subsection{Motivation: Simultaneous Cores}
An \defn{$a$-core} is an integer partition with no hook-length divisible by $a$.  As a first example, observe that the $2$-cores are exactly those partitions of staircase shape.  According to the notes in G.~James and A.~Kerber~\cite{james1981representation}, cores were originally developed by T.~Nakayama in his study of the modular representation theory of the symmetric group~\cite{nakayama1940some}.\footnote{The relationship arises as follows.  The irreducible representations of $\mathfrak{S}_n$ over a field of characteristic zero are parametrized by integer partitions of $n$.  T.~Nakayama conjectured that the $p$-blocks for $\mathfrak{S}_n$ are in bijection with $p$-cores---more specifically, that the $p$-block corresponding to a $p$-core $\lambda$ contains exactly those representations whose indexing partitions have core $\lambda$~\cite{nakayama1940some}.  This conjecture was proven by R.~Brauer and G.~Robinson~\cite{brauer1947conjecture}.}  For $\lambda$ a partition of $k$, we write $\size(\lambda):=k$.  Let $\core(a)$ be the set of all $a$-cores, and define \[\core_k(a) := \{ \lambda \in \core(a) : \size(\lambda)=k \}.\]  The following identity relating integer partitions and $a$-cores is a fun exercise using the abacus:
\begin{theorem}[{Generating function for $\size$ on $\core(a)$;~\cite{james1981representation,garvan1990cranks}}]
	\[\sum_{k=0}^\infty |\core_k(a)| q^k=\prod_{i=1}^\infty \frac{(1-q^{ai})^a}{1-q^i}.\]
    \label{thm:total_number}
   \end{theorem}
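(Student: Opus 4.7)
The plan is to use the \emph{abacus} model of integer partitions, as the parenthetical remark suggests. First I would encode a partition $\lambda$ by its first-column hook lengths (equivalently, its $\beta$-numbers, after fixing a charge-zero convention) and display these beads on an abacus with $a$ runners indexed by residue modulo $a$. In this language, $\lambda$ is an $a$-core exactly when every runner is pushed flush, i.e.\ no bead sits above an empty position on its own runner.

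Next I would invoke the classical \emph{core--quotient bijection}: sliding the beads up independently on each of the $a$ runners produces an $a$-core $\mu$, while the $a$ sequences of slide-distances assemble into an $a$-tuple of ordinary partitions $(\lambda^{(0)}, \dots, \lambda^{(a-1)})$, the \emph{$a$-quotient} of $\lambda$. This gives a bijection between partitions and pairs $(\mu,(\lambda^{(0)},\dots,\lambda^{(a-1)}))$ with $\mu \in \core(a)$ and each $\lambda^{(j)}$ an ordinary partition; a direct count of bead-jumps yields the size identity
\[
\size(\lambda) \;=\; \size(\mu) \;+\; a \cdot \sum_{j=0}^{a-1} \size(\lambda^{(j)}).
\]

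The theorem then follows by equating two expressions for $\sum_\lambda q^{\size(\lambda)}$. Euler's classical product gives $\prod_{i \geq 1}(1-q^i)^{-1}$ on one hand; on the other hand, the core--quotient bijection together with the size identity above rewrites the generating function as
\[
\left(\sum_{\mu \in \core(a)} q^{\size(\mu)}\right) \cdot \prod_{i \geq 1}\frac{1}{(1-q^{ai})^a},
\]
since each of the $a$ quotient partitions contributes an independent factor of Euler's product with $q$ replaced by $q^a$. Dividing out the quotient factor gives exactly the claimed product.

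The main obstacle is not combinatorial ingenuity but careful bookkeeping on the abacus. One must pin down a normalization for the $\beta$-sets, verify that independently sliding the runners preserves membership in the class of valid $\beta$-sets (so the slid configuration still represents a genuine partition), and count the bead-jumps exactly to prove the size identity. Once these abacus foundations are laid, the theorem collapses to Euler's product formula.
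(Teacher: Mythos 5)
Your proof is correct and is exactly the ``fun exercise using the abacus'' that the paper alludes to: the paper itself gives no proof, citing~\cite{james1981representation,garvan1990cranks}, whose argument is precisely your core--quotient bijection with the size identity $\size(\lambda)=\size(\mu)+a\sum_j\size(\lambda^{(j)})$ and a comparison with Euler's product. The bookkeeping issues you flag (normalizing $\beta$-sets, checking that independent runner slides yield a valid configuration, counting bead jumps) are the genuine content and are handled in the cited references.
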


An \defn{$(a,b)$-core} is a partition that is both an $a$-core and a $b$-core.  We denote the set of $(a,b)$-cores by $\core(a,b)$.  When $a$ and $b$ are coprime, J.~Anderson proved the surprising fact that there are only finitely many $(a,b)$-cores.

\begin{theorem}[{Number of simultaneous $(a,b)$-cores; J.~Anderson~\cite{anderson2002partitions}}]
$ $\newline
	For $\gcd(a,b)=1$, \[\left|\core(a,b)\right| = \frac{1}{a+b}\binom{a+b}{b}.\]
\label{thm:anderson_number}
\end{theorem}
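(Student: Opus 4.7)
The plan is to prove the theorem via a two-step bijection --- from $\core(a,b)$ to order ideals of a finite planar poset, and thence to rational Dyck paths --- followed by the cycle lemma.

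First, I would use the abacus encoding to identify each partition $\lambda$ with a Maya diagram $M(\lambda) \subset \mathbb{Z}$, and invoke the standard fact that $\lambda \in \core(a)$ if and only if $M(\lambda)$ is closed under subtracting $a$. Hence $\lambda \in \core(a,b)$ if and only if $M(\lambda)$ is closed under subtracting both $a$ and $b$. Since $\gcd(a,b) = 1$, Sylvester--Frobenius then forces $M(\lambda)$ to agree with $\mathbb{Z}_{\le 0}$ outside a finite ``exceptional'' subset contained in the Sylvester gaps $N(a,b) := \mathbb{Z}_{>0} \setminus \{as + bt : s, t \ge 0\}$, which has cardinality $(a-1)(b-1)/2$. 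Thus $(a,b)$-cores biject with order ideals of the natural poset on $N(a,b)$ generated by the cover relations ``$+a$'' and ``$+b$''.

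Second, this poset admits a planar drawing: placing the integer $ab - ai - bj$ at the lattice point $(i, j)$ identifies $N(a,b)$ with the interior lattice points of the right triangle cut by the line $ai + bj = ab$. Under this identification, the cover relations ``$+a$'' and ``$+b$'' become the two coordinate directions, and order ideals correspond bijectively --- via their staircase boundaries --- to rational $(a,b)$-Dyck paths, i.e.\ lattice paths from $(0,0)$ to $(b,a)$ using unit $N$ and $E$ steps that stay weakly above the diagonal.

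Finally, I would apply the cycle lemma of Dvoretzky--Motzkin. Since $\gcd(a,b) = 1$, the cyclic rotation action on the $\binom{a+b}{a}$ binary words with $a$ $N$'s and $b$ $E$'s partitions them into orbits of size exactly $a + b$, and a standard argument shows each orbit contains exactly one Dyck path. The total count is therefore $\binom{a+b}{a} / (a+b) = \frac{1}{a+b} \binom{a+b}{b}$, as desired.

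The main obstacle is Step 2: establishing the bijection between order ideals of the planar Sylvester-gap poset and rational Dyck paths. One must verify both that the cover relations ``$+a$'' and ``$+b$'' correspond to the two lattice directions under the planar drawing, and that the staircase boundary of any such order ideal is a lattice path of the required Dyck form. This is essentially Anderson's original argument, and while conceptually clean, it requires careful bookkeeping with the planar placement of gaps and the ``weakly above'' diagonal condition.
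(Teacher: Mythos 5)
Your outline is correct and is essentially Anderson's original proof: the abacus/beta-set characterization of $a$-cores, the identification of $(a,b)$-cores with order ideals of the poset of Sylvester gaps $N(a,b)$, the planar realization of that poset via the labels $ab-ai-bj$ inside the $a\times b$ triangle, and finally the cycle lemma applied to the $\binom{a+b}{a}$ words, whose orbits all have size $a+b$ because $\gcd(a,b)=1$. One bookkeeping caveat for Step 1: in the charge-zero Maya normalization your claim is not literally true --- for $a=2$, $b=3$ the core $\lambda=(1)$ has bead set $\{0,-2,-3,\dots\}$, which has a gap at $-1\notin N(a,b)$; the clean statement is obtained by encoding $\lambda$ by its finite set $S$ of first-column hook lengths (beads at $S\cup\mathbb{Z}_{<0}$), for which closure under subtracting $a$ and $b$ within $\mathbb{Z}_{\geq 0}$ forces $S\subseteq N(a,b)$ to be a down-set. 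This is exactly the $\lambda$-dependent shift hiding in your ``careful bookkeeping'' remark and it does not affect the rest of the argument. Your route is, however, genuinely different from the paper's treatment: there the statement is quoted from Anderson, and within the paper's own framework the count is recovered Lie-theoretically --- $(a,b)$-cores are identified with the coroot points $\Sommers_\Phi(b)\cap\check{Q}$, the rigid motion of \Cref{thm:wf} carries these to $b\ac\cap\check{Q}$, the coweight points of $b\ac$ are the $\binom{a+b-1}{b}$ nonnegative integer solutions of $\sum_{i=0}^{n}x_i=b$, and \Cref{thm:cycl_action_and_Q} (each $b\cycl$-orbit contains exactly one coroot point) divides by the index of connection $f=a$ to give $\frac{1}{a}\binom{a+b-1}{b}=\frac{1}{a+b}\binom{a+b}{b}$, with \Cref{thm:haiman_number} as the uniform generalization. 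Your approach buys an elementary, purely combinatorial proof together with an explicit bijection to rational $(a,b)$-Dyck paths, which is useful for transporting statistics; the paper's approach buys uniformity across all affine types at the cost of the affine Weyl group machinery.
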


In part due to their connection with rational Dyck paths (and hence diagonal harmonics and the zeta map)~\cite{sulzgruber2014type,ceballos2015combinatorics,bergeron2014compositional} and rational Catalan combinatorics~\cite{armstrong2013rational,gorsky2014affine,armstrong2014rational}, simultaneous cores have recently attracted attention.  Furthermore, the study of simultaneous cores has now transcended these original motivations, and they have become combinatorial objects worthy of study in their own right~\cite{nath2008t,aukerman2009simultaneous,fayers2011t,amdeberhan2014multi,yang2014enumeration, nath2014symmetry,chen2014average, aggarwal2014converse,fayers2014generalisation,xiong2014largest,aggarwal2015does,fayers2015s}.  In this direction, there are two main results on the statistic $\size$.

\begin{theorem}[{Maximum $\size$ of an $(a,b)$-core; J.~Olsson and D.~Stanton~\cite{olsson2007block}}]
	For $\gcd(a,b)=1$, \[\max_{\lambda \in \core(a,b)}(\size(\lambda)) = \frac{(a^2-1)(b^2-1)}{24}.\]  This maximum is attained by a unique $(a,b)$-core.
\label{thm:olsson_stanton_max_size}
\end{theorem}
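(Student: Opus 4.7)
The strategy is to identify the unique maximum-size $(a,b)$-core explicitly via the classical $\beta$-set encoding and the gap set of the numerical semigroup $\langle a, b\rangle$.

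First I would recall that a partition $\lambda$ with $\ell$ nonzero parts has $\beta$-set $\beta(\lambda) := \{\lambda_i + \ell - i : 1 \le i \le \ell\} \subset \mathbb{Z}_{>0}$, with $\size(\lambda) = \sum_{x \in \beta(\lambda)} x - \binom{\ell}{2}$, and that $\lambda$ is an $a$-core if and only if $\beta(\lambda)$ is closed under $x \mapsto x - a$ whenever $x > a$. Hence $(a,b)$-cores correspond to finite subsets $S \subset \mathbb{Z}_{>0}$ closed under subtracting both $a$ and $b$. Since $\gcd(a,b) = 1$, no positive element of $\langle a, b\rangle := \{ra + sb : r, s \in \mathbb{Z}_{\ge 0}\}$ can lie in $S$ (iteratively subtracting $a$'s and $b$'s would reach $0 \notin S$), so $S \subseteq N(a,b) := \mathbb{Z}_{>0} \setminus \langle a, b\rangle$. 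Conversely $N(a,b)$ is itself closed under these subtractions, so it is the $\beta$-set of an $(a,b)$-core $\lambda^*$. More generally, $(a,b)$-cores correspond bijectively to order ideals of the poset $P(a,b)$ on $N(a,b)$ with covers $y \lessdot y+a$ and $y \lessdot y+b$, and $\lambda^*$ corresponds to the top ideal.

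Next I would verify that $\lambda^*$ uniquely maximizes $\size$ by proving the partition containment $\lambda \subseteq \lambda^*$ for every $(a,b)$-core $\lambda$. The key structural fact is that the Frobenius number $ab - a - b$ is the unique maximal element of $P(a,b)$: any maximal $y \in N(a,b)$ must satisfy $y + a, y + b \in \langle a, b\rangle$, and using $\gcd(a,b) = 1$ this forces $y = ab - a - b$. Padding both $\beta$-sets to the common level $\ell^* := |N(a,b)|$ via $\beta(\lambda, \ell^*) = (\beta(\lambda) + c) \cup \{0, 1, \dots, c-1\}$ for $c = \ell^* - |\beta(\lambda)|$, one then checks the entrywise domination $\beta(\lambda, \ell^*)_i \le \beta(\lambda^*, \ell^*)_i$ for all $i$ using the order-ideal structure on $\beta(\lambda)$. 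Since $\lambda \ne \lambda^*$ forces the containment to be strict, $\size(\lambda) < \size(\lambda^*)$.

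Finally I would compute $\size(\lambda^*)$ using Sylvester's formula $|N(a,b)| = \tfrac{(a-1)(b-1)}{2}$ and the Brown-Shiue identity $\sum_{n \in N(a,b)} n = \tfrac{(a-1)(b-1)(2ab - a - b - 1)}{12}$. Substituting into
\[
\size(\lambda^*) = \sum_{n \in N(a,b)} n - \binom{|N(a,b)|}{2}
\]
and simplifying yields $\tfrac{(a^2-1)(b^2-1)}{24}$.

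The main obstacle is the strict-maximality step: $\size$ is not monotone under set containment of $\beta$-sets taken at their natural levels (adding a small element when $|S|$ is large can decrease $\size$), so the comparison must be routed through partition containment at the common level $\ell^*$. The entrywise inequality there is not immediate from $\beta(\lambda) \subseteq N(a,b)$ alone — a naive bound based only on set containment would be too weak — and one has to exploit the order-ideal structure of $\beta(\lambda)$ in $P(a,b)$ together with the existence of the unique maximum $ab - a - b$ in order to close the gap.
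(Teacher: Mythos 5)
Your route is genuinely different from the paper's: here Theorem~\ref{thm:olsson_stanton_max_size} is obtained as the type $\widetilde{A}_{a-1}$ case of Theorem~\ref{thm:max_size}, which is proved by transporting $\size$ through $\wf$ to the strictly convex quadratic form $\zise$ on the dilated alcove $b\ac$, locating the maximum at the vertex $0$ (the unique vertex in $\check{Q}$), and evaluating it with the strange formula; you instead work with $\beta$-sets and the numerical semigroup $\langle a,b\rangle$, which is essentially the Olsson--Stanton/Vandehey route. Much of your skeleton is sound: $\beta$-sets of $(a,b)$-cores are exactly the order ideals of $P(a,b)$ on the gap set $N(a,b)$, the Frobenius number $ab-a-b$ is the unique maximal element (your coprimality argument does force $y=s_1b-a=r_2a-b$ and hence $y=ab-a-b$), and the closing computation with $|N(a,b)|=\tfrac{(a-1)(b-1)}{2}$ and $\sum_{n\in N(a,b)}n=\tfrac{(a-1)(b-1)(2ab-a-b-1)}{12}$ does simplify to $\tfrac{(a^2-1)(b^2-1)}{24}$. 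One small repair: the abacus criterion must be ``if $x\ge a$ then $x-a\in\beta(\lambda)$,'' not ``$x>a$''; with your version $a$ itself could be a $\beta$-number (e.g.\ $\lambda=(2)$, $a=2$ would pass), and your argument that $S$ avoids $\langle a,b\rangle$ breaks at the final subtraction reaching $0$.

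The genuine gap is at the central step, the padded entrywise domination $t_i\ge s_i+c$ (with $t_1>t_2>\cdots$ the gaps, $s_1>s_2>\cdots$ the elements of $S=\beta(\lambda)$, and $c=|N(a,b)\setminus S|$), i.e.\ the containment $\lambda\subseteq\lambda^*$. This is exactly Vandehey's theorem, which the paper records as a \emph{strictly stronger} statement than the one you are proving, yet you present it as something ``one then checks'' from the order-ideal structure plus the unique maximum; those two ingredients do not close it as stated. Concretely, the natural induction (adjoin a minimal element $u$ of the complement $U$, so the complement shrinks to size $c-1$) only yields $t_i\ge s_i+(c-1)$ at the indices with $s_i>u$, one short of what is needed, and it cannot be bootstrapped by monotonicity because $\size$ is not strictly monotone along the ideal lattice: for $(a,b)=(3,4)$ the ideals $\{2\}$ and $\{1,2\}$ give the cores $(2)$ and $(1,1)$, both of size $2$. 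Unwinding your inequality, what must actually be proved is that for every $s\in S$ the window $[s,s+c)$ contains at most $\left|\{u\in N(a,b)\setminus S:u\ge s\}\right|$ gaps, and neither the ideal property of $S$ nor the uniqueness of the maximal element $ab-a-b$ yields this without a further injection/Hall-type (or Olsson--Stanton-style) argument. Until that argument is supplied, your write-up computes $\size(\lambda^*)$ correctly but does not establish that it is the maximum, nor that the maximizer is unique.
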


A stronger statement is actually true: J.~Vandehey proved that the diagram of this unique $(a,b)$-core maximizing $\size$ contains the diagrams of all other $(a,b)$-cores~\cite{vandehey2008general,fayers2011t}~\cite[Remark 4.11]{olsson2007block}.

D.~Armstrong conjectured the following attractive formula~\cite{armstrong2011conjecture,armstrong2014results}, which was proven for $b=a+1$ by R.~Stanley and F.~Zanello~\cite{stanley2013catalan}; for $b=ma+1$ by A.~Aggarwal~\cite{aggarwal2014armstrong}; and in full generality by P.~Johnson~\cite{johnson2015lattice}.

\begin{theorem}[{Expected $\size$ of an $(a,b)$-core; P.~Johnson~\cite{johnson2015lattice}}]
$ $\newline
	For $\gcd(a,b)=1$, \[\Expt{\lambda \in \core(a,b)}{\size(\lambda)} = \frac{(a-1)(b-1)(a+b+1)}{24}.\]
\label{thm:johnson_expected_size}
\end{theorem}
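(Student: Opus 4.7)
The plan is to follow the outline hinted at in the abstract: combine an Ehrhart-theoretic description of $\core(a,b)$ with the polynomial method. First I would use the abacus/Anderson bijection between $a$-cores and the coroot lattice $\check{Q}$ of type $\widetilde{A}_{a-1}$, which identifies an $a$-core $\lambda$ with an integer vector $\mathbf{n}=(n_0,\dots,n_{a-1})$ summing to zero. Under this bijection, $\size(\lambda)$ becomes an explicit inhomogeneous quadratic polynomial $Q(\mathbf{n})$ in these coordinates, and the extra condition of being a $b$-core cuts the coroot lattice down to $b\mathcal{A}\cap\check{Q}$, where $\mathcal{A}$ is the fundamental alcove of the affine Weyl group $\widetilde{A}_{a-1}$. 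Thus
\[
\sum_{\lambda\in\core(a,b)} \size(\lambda) \;=\; \sum_{\mathbf{n}\,\in\, b\mathcal{A}\,\cap\,\check{Q}} Q(\mathbf{n}).
\]

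Next I would invoke a weighted form of Ehrhart's theorem: since $Q$ has degree two and $\mathcal{A}$ is an $(a-1)$-dimensional rational simplex, the right-hand side is a quasipolynomial in the dilation factor $b$ of degree at most $a+1$. Combined with Theorem~\ref{thm:anderson_number}, which exhibits $|\core(a,b)|$ for fixed $a$ as the polynomial $(b+1)(b+2)\cdots(b+a-1)/a!$ of degree $a-1$ in $b$, this presents the expectation $\Expt{\lambda\in\core(a,b)}{\size(\lambda)}$ as a ratio of a quasipolynomial of degree $\le a+1$ and a polynomial of degree $a-1$.

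To finish via the polynomial method, I would argue that this ratio is in fact a genuine polynomial in $b$ of degree at most $2$; this can be attempted either by analyzing the Ehrhart constituents on each residue class modulo the period and checking that they agree, or by exploiting the $a\leftrightarrow b$ symmetry of $\core(a,b)$ to force constancy of the leading behavior. Once polynomiality is established, it suffices to verify the target formula $(a-1)(b-1)(a+b+1)/24$ at a few coprime pairs $(a,b)$ for each fixed $a$, and the infinite family $b = ma+1$ already handled by Aggarwal supplies more than enough interpolation points. Symmetry in $a$ then pins down the full two-parameter identity.

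The main obstacle I expect is the first step: pinning down a clean quadratic expression for $\size(\lambda)$ in Anderson coordinates and identifying precisely the simplex cut out by the $b$-core condition so that the dilation structure in $b$ is manifest. A secondary obstacle is proving that the Ehrhart quasipolynomial actually collapses to a single polynomial in $b$; without this collapse the polynomial method requires extra work to reconcile the constituents on different residue classes modulo $a$, and that reconciliation is probably where the nontrivial arithmetic of the problem gets concentrated.
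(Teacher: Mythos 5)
Your overall setup is the same as the paper's (and Johnson's): identify $(a,b)$-cores with coroot-lattice points of type $\widetilde{A}_{a-1}$ in a simplex, express $\size$ as a quadratic form, and finish by weighted Ehrhart theory plus interpolation. But the step you defer at the end is not a loose end you can postpone---it is the key missing idea, and without it your endgame fails. The simplex you sum over is rational but \emph{not} integral with respect to $\check{Q}$: its nonzero vertices are fundamental coweights, whose coordinates in the coroot basis have denominators dividing $a$, so \Cref{thm:generized_ehrhart} only gives a quasipolynomial in $b$ of period dividing $a$. All of your proposed interpolation data (Aggarwal's family $b=ma+1$, and Stanley--Zanello's $b=a+1$) lie in the single residue class $1 \bmod a$, so they pin down only that one constituent; for every other residue class coprime to $a$ you have no evaluation points at all, and neither of your suggested fixes closes this: ``checking that the constituents agree'' is precisely the nontrivial content to be proved, and the $a\leftrightarrow b$ symmetry constrains the final answer but does not show that distinct constituents of the quasipolynomial coincide. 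There is also the imprecision you flagged yourself: the $b$-core condition does not cut out $b\ac\cap\check{Q}$ with a \emph{fixed} quadratic weight; it cuts out $\Sommers_\Phi(b)\cap\check{Q}$ (whose defining hyperplanes change with $b\bmod a$), and only after the rigid motion $\wf$ of \Cref{thm:wf} does one land in $b\ac$, at the price that the weight becomes the $b$-dependent statistic $\zise$ of \Cref{cor:size_on_ba}, which must be expanded into fixed-weight pieces with polynomial coefficients in $b$ before Ehrhart--Euler--Maclaurin theory applies.

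The paper closes exactly this gap by changing lattices rather than reconciling constituents: in type $\widetilde{A}_{a-1}$ all marks $c_i$ equal $1$, so $b\ac$ is integral for the coweight lattice $\check{\Lambda}$ (\Cref{prop:barational}) and $\sum_{x\in b\ac\cap\check{\Lambda}}\zise(x)$ is an honest polynomial in $b$ of degree $a+1$; then the free action of $b\cycl$ (order $a$), which preserves $\zise$ and meets $\check{Q}$ exactly once per orbit (\Cref{thm:cycl_action_and_Q} and \Cref{thm:cycl_action_and_quad}), gives $\sum_{b\ac\cap\check{Q}}\zise=\frac{1}{a}\sum_{b\ac\cap\check{\Lambda}}\zise$ simultaneously for all $b$ coprime to $a$---a single polynomial identity valid in every coprime residue class. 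With that in hand you would not even need Aggarwal as an input: Ehrhart reciprocity together with the fact that $e_i\ac^\circ$ contains no lattice points for the exponents $e_i=1,\dots,a-1$ yields zeroes at $b=-1,\dots,-(a-1)$; the trivial evaluation at $b=1$ (only the empty core) and reciprocity give two further zeroes at $b=1$ and $b=-a-1$; and the constant term at $b=0$ equals $\zise(0)=-\frac{(a-1)(a+1)}{24}$ by the strange formula. These data determine the degree-$(a+1)$ polynomial completely, and dividing by $\frac{1}{a!}\prod_{i=1}^{a-1}(b+i)$ gives the stated expectation. So either import the coweight-lattice symmetry argument to justify single-polynomiality (after which your interpolation works, though it leans on Aggarwal's theorem), or adopt the paper's self-contained interpolation scheme.
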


Our first new result is to extend P.~Johnson's technique\footnote{This is of course the well-known \emph{Paulynomial} method.} to compute the variance of $\core(a,b)$.

\begin{theorem}[{Variance of $\size$ on $(a,b)$-cores}] 
$ $\newline
For $\gcd(a,b)=1$, \[\Var{\lambda \in \core(a,b)}{\size(\lambda)} = \frac{ab(a-1)(b-1)(a+b)(a+b+1)}{1440}.\]
\label{thm:us_variance}
\end{theorem}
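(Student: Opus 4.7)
The proof plan is to extend P.~Johnson's lattice-point technique from Theorem~\ref{thm:johnson_expected_size} to a second-moment computation. Since
\[\Var{\lambda \in \core(a,b)}{\size(\lambda)} = \Expt{\lambda \in \core(a,b)}{\size(\lambda)^2} - \left(\Expt{\lambda \in \core(a,b)}{\size(\lambda)}\right)^2,\]
and the second term is determined by Theorem~\ref{thm:johnson_expected_size}, the task reduces to evaluating the unnormalized second moment $\sum_{\lambda \in \core(a,b)} \size(\lambda)^2$, then dividing by $|\core(a,b)|$ supplied by Theorem~\ref{thm:anderson_number}.

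Johnson's method identifies $\core(a,b)$ with the lattice points of a certain $(a-1)$-dimensional simplex (arising from the abacus / affine-symmetric-group description of cores), under which $\size(\lambda)$ becomes a quadratic polynomial $Q$ in the coordinates. He then computes $\sum Q$ by an Ehrhart-style polynomial manipulation that accounts for the coprimality $\gcd(a,b)=1$. The plan is to run the same argument with $Q^2$ replacing $Q$: expand $Q^2$ into monomials and reduce the sum to a finite collection of lattice-point power sums $\sum x_1^{i_1} \cdots x_{a-1}^{i_{a-1}}$ with $i_1 + \cdots + i_{a-1} \le 4$, each of which is amenable to Faulhaber-style evaluation inside Johnson's framework. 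Carrying this out gives a quasi-polynomial in $a,b$ whose relevant coefficients assemble into the closed form claimed in Theorem~\ref{thm:us_variance}.

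The principal obstacle is computational: the degree-$4$ expansion of $Q^2$ produces substantially more monomials and cross-terms than the single quadratic Johnson had to handle, and the Bernoulli-type constants accumulating from the monomial sums must be combined very carefully. Useful consistency checks along the way are that intermediate expressions be symmetric in $a \leftrightarrow b$, that the final answer vanish at $a=1$ or $b=1$ (where $|\core(a,b)|=1$), and that it match direct computation in small cases such as $(a,b)=(2,3),(3,4),(3,5),(4,5)$. Once the degree-$4$ lattice sum is in hand, subtracting $|\core(a,b)|$ times the square of the expectation from Theorem~\ref{thm:johnson_expected_size} and dividing by $|\core(a,b)|$ produces the factored form $\frac{ab(a-1)(b-1)(a+b)(a+b+1)}{1440}$; the factorization can be forced by matching the zero locus at $a,b\in\{0,1\}$ and at $a+b=0$, leaving only the rational constant to be pinned down from the leading coefficient of the Ehrhart sum.
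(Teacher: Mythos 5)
Your skeleton agrees with the paper's: both reduce the variance to the unnormalized second moment via $\mathbb{V}=\mathbb{E}(X^2)-\mathbb{E}(X)^2$, identify $\core(a,b)$ with lattice points of a simplex on which $\size$ is a quadratic form, and invoke weighted Ehrhart/Euler--Maclaurin theory together with \Cref{thm:johnson_expected_size,thm:anderson_number}. But the engine you propose is genuinely different from what Johnson does and what the paper does in \Cref{sec:type_a_variance}. You plan to expand $Q^2$ into monomials and evaluate all the resulting degree-$\le 4$ lattice power sums over the dilated simplex by Faulhaber-type computations; that is a direct summation in both $a$ and $b$, which is workable in principle but is exactly the heavy computation the polynomial method is designed to avoid (note also that the coefficients of the linear part of $Q$ depend on the coordinate index, so these are not clean symmetric power sums). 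The paper instead uses only the \emph{degree} bound from \Cref{thm:generized_ehrhart}: in type $\widetilde{A}_n$ the weighted enumerator $\ac^{\check{\Lambda}}_{\sizeshift^2}(b)$ is an honest polynomial in $b$ of degree $n+4$; Ehrhart reciprocity at the exponents gives $n$ zeroes, the evaluation at $b=1$ (all vertices of $\ac$ have $\zise=0$) and its reciprocal partner $b=-h-1$ give two more, the constant term at $b=0$ is $\bigl(\tfrac{n(h+1)}{24}\bigr)^2$, and the only explicit lattice computation needed is the single small dilation $b=2$ (done uniformly in $n$, separately for $n$ even and odd), whose reciprocal value at $b=-(n+3)$ then pins down the remaining quadratic factor in \Cref{eq:poly_a_var}. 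Finally, the passage from coweight to coroot sums is handled by the index-$(n+1)$ symmetry group preserving $\zise$, so the division by $a$ is built in rather than absorbed into your coordinates.

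Two cautions on your write-up as stated. First, the closing claim that the factored form ``can be forced by matching the zero locus at $a,b\in\{0,1\}$ and at $a+b=0$, leaving only the rational constant'' is not an argument: a two-variable polynomial is not determined by vanishing on a few lines plus a leading coefficient without explicit degree control in both variables, so either carry out the full degree-$4$ summation (in which case this step is redundant) or replace it by the interpolation scheme above, where the degree in $b$ is known and the number of required data points is exact. Second, be explicit that the quantity you sum is the second moment over the \emph{coroot} points (or equivalently Johnson's zero-sum coordinates), since the naive coweight count overcounts by a factor of $a$; this is where $\left|\core(a,b)\right|=\frac{1}{a+b}\binom{a+b}{b}$ reenters when you normalize.
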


With more effort, we also compute the third moment, which was conjectured by D.~Armstrong in 2013~\cite{drewprivate}.

\begin{theorem}[{Third moment of $\size$ on $(a,b)$-cores}] 
$ $\newline
For $\gcd(a,b)=1$, let $\mu:=\Expt{\lambda \in \core(a,b)}{\size(\lambda)}$.  Then \tiny\[\sum_{\lambda \in \core(a,b)} \left(\size(\lambda)-\mu\right)^3 = \frac{ab(a-1) (b-1) (a+b) (a+b+1) \left(2 a^2 b-3 a^2+2 a b^2-3 a b-3 b^2-3 \right)}{60480}.\]\normalsize
\label{thm:us_third_moment}
\end{theorem}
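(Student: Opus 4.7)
The plan is to extend the Ehrhart/polynomial method of P.~Johnson one degree higher than in the variance computation. By Johnson's bijection~\cite{johnson2015lattice} the set $\core(a,b)$ is identified with the lattice points of a certain rational simplex $P = P_{a,b}$, and on $P$ the statistic $\size$ is realized by an explicit \emph{quadratic} polynomial $Q = Q_{a,b}$ in the simplex coordinates. Consequently $(\size-\mu)^3$ is a polynomial of degree six, and the target sum
\[
\sum_{\lambda \in \core(a,b)} (\size(\lambda)-\mu)^3 \;=\; \sum_{x \in P \cap \Z^N} \bigl(Q(x)-\mu\bigr)^3
\]
(with $N$ the dimension of $P$) reduces, after expanding the cube, to the three raw power sums $S_k := \sum_x Q(x)^k$ for $k=1,2,3$, since $|\core(a,b)|$ and $\mu$ are already known from Theorems~\ref{thm:anderson_number} and~\ref{thm:johnson_expected_size}.

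The sums $S_1$ and $S_2$ are precisely the data underlying Theorems~\ref{thm:johnson_expected_size} and~\ref{thm:us_variance}, so the genuinely new input is $S_3$, the sum over $P \cap \Z^N$ of a polynomial of degree six. Following Johnson, I would promote $a$ and $b$ to formal parameters: $S_3$ is then a quasi-polynomial in $(a,b)$ in the regime $\gcd(a,b)=1$, whose leading term is the continuous integral $\int_P Q^3$, evaluable in closed form by a direct change of variables. The lower-order corrections are controlled by the Euler--Maclaurin / Brion--Vergne expansion for Ehrhart sums, and are further cut down by the $a \leftrightarrow b$ symmetry of the left-hand side (since $\core(a,b) = \core(b,a)$), by the natural degree bounds in $a$ and $b$, and by vanishing at trivial specializations such as $a = 1$. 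What remains is a finite list of unknown coefficients, to be fixed by direct numerical verification at enough small coprime pairs.

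Assembling $S_1, S_2, S_3$ and substituting into the expansion of $(Q-\mu)^3$ then produces the stated closed form. As a sanity check on the final answer, the inner factor $2a^2 b - 3a^2 + 2ab^2 - 3ab - 3b^2 - 3 = 2ab(a+b) - 3(a^2+ab+b^2) - 3$ is symmetric in $a$ and $b$, as demanded by $\core(a,b) = \core(b,a)$.

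The main obstacle is arithmetic rather than structural: relative to the degree-four variance sum, a degree-six sum carries many more monomials, and the Euler--Maclaurin correction terms now involve Bernoulli numbers up to $B_6$ together with intricate combinations of the facet-length denominators $a$ and $b$. Keeping this bookkeeping under control, while aggressively leveraging coprimality and the $a \leftrightarrow b$ symmetry to collapse free coefficients, is where the ``more effort'' advertised in the statement actually lives; the conceptual framework is identical to that used for Theorem~\ref{thm:us_variance}.
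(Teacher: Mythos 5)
Your general framework---identify $\core(a,b)$ with lattice points in a rational simplex, realize $\size$ as an explicit quadratic form, and pin down a weighted Ehrhart enumerator from finitely many pieces of data---is indeed the framework of the paper. The gap is in the step where you promote $a$ and $b$ to formal parameters and declare that $S_3$ is ``a quasi-polynomial in $(a,b)$'' whose remaining unknown coefficients can be fixed by numerical verification at small coprime pairs. Weighted Ehrhart/Euler--Maclaurin theory (\Cref{thm:generized_ehrhart}) gives quasi-polynomiality only in the dilation parameter $b$ for a \emph{fixed} polytope, i.e.\ for fixed $a$; the dimension of the polytope grows with $a$, and the raw power sums $S_k=\sum_x Q(x)^k$ are certainly not quasi-polynomial in the two variables jointly (already $S_0=|\core(a,b)|=\tfrac{1}{a+b}\binom{a+b}{a}$ is not, and for fixed $a$ the degree of $S_3$ in $b$ is $a+5$, which grows with $a$). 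So there is no a priori finite-dimensional space of candidate answers in which ``fit the coefficients from enough small cases'' constitutes a proof: the fact that the final centered answer has bounded degree in $a$ is itself part of what must be proved, and neither the $a\leftrightarrow b$ symmetry nor vanishing at $a=1$ supplies it. The Euler--Maclaurin/Bernoulli bookkeeping you anticipate is also not where the paper's effort goes; it is never needed there.

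What the paper does instead (\Cref{sec:automation}, building on \Cref{sec:type_a_variance}) is: for each fixed rank $n=a-1$, the centered sum $\ac^{\cwl}_{(\zise-\mu)^3}(b)$ is an honest polynomial in $b$ of degree $n+6$ (type $\widetilde{A}_n$ has period $1$); $n+2$ of its values are forced uniformly in $n$ by the exponents together with Ehrhart reciprocity ($b\mapsto -h-b$, \Cref{rem:points_for_free}) and the easy evaluations at $b=0,1$; and the handful of remaining unknowns are determined by evaluating the sum at the small dilations $b=2,3,4,5$ \emph{symbolically, uniformly in $n$}, by parametrizing the coweight points of $b\ac$ by compositions of $b$ and using the explicit inverse Cartan matrix (\Cref{prop:inv_of_cartan,prop:rho_difference}); one then passes from $\cwl$ to $\Q$ by dividing by the index of connection (\Cref{thm:cycl_action_and_Q}). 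That uniform-in-$n$ evaluation at finitely many small $b$, combined with the reciprocity zeroes, is exactly the ingredient that controls the $a$-dependence and that your sketch is missing; to repair your argument you would either need to prove an explicit two-variable structure theorem with degree bounds, or fall back on the paper's route of exact symbolic evaluations valid for all $a$ at once.
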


D.~Armstrong also conjectured a formula for the fourth cumulant on the basis of extensive computations.  We will not state or prove his conjecture, but the interested reader might enjoy~\Cref{sec:integrals}.

\medskip

\subsection{Simply-Laced Generalizations}

The main purpose of this paper is to give generalizations of~\Cref{thm:total_number,thm:anderson_number,thm:olsson_stanton_max_size,thm:johnson_expected_size,thm:us_variance} for all simply-laced types.  This simply-laced requirement arises from a simplification that only happens in those types, and is explained in~\Cref{sec:simply_laced}.   

To this end, we fix the following notation, which is fully reviewed in~\Cref{sec:affine_weyl_groups}. Let $\Phi$ be an irreducible crystallographic root system of rank $n$ with ambient space $V$ and Weyl group $W$.
Let $\ar$ be the set of affine roots and denote the affine Weyl group by $\wa$. Choose a set of \defn{simple roots} $\Delta$ for $\Phi$ and let $\Phi^+$ be the corresponding set of \defn{positive roots}. Say $\Phi$ has \defn{exponents} $e_1 \leq e_2 \leq \cdots \leq e_n$, \defn{Coxeter number} $h:=e_n+1$, and \defn{dual Coxeter number} $g$.  \Cref{thm:total_number,thm:anderson_number,thm:olsson_stanton_max_size,thm:johnson_expected_size,thm:us_variance} will be recovered in this notation by specializing to type $A_{a-1}$, in which case $n=a-1$, $h=a$ and $\widetilde{W}=\widetilde{\mathfrak{S}}_{a}$.

A useful analogue of a core for $\widetilde{W}$ turns out to be a point of the coroot lattice $\check{Q}$, which we emphasize with the notation \[ \core(\widetilde{W}) := \check{Q}.\] In~\Cref{sec:affine_sym_and_cores}, we recall how this definition recovers $a$-cores when $\widetilde{W}=\widetilde{\mathfrak{S}}_{a}$.

\medskip

In order to generalize~\Cref{thm:total_number,thm:olsson_stanton_max_size,thm:johnson_expected_size,thm:us_variance}, we require a notion of the statistic $\size$, defined combinatorially for $\widetilde{\mathfrak{S}}_{a}$ using the Ferrers diagram of a core.  For the purposes of the introduction, we pull this out of a hat (but see~\Cref{def:size_on_elements} and~\Cref{ex:heights}): for any point $x \in V$---and in particular for any point in the coroot lattice $\check{Q}$---let

\[\size(x):=\frac{g}{2}||x||^2-\langle x,\rho\rangle,\]
where $\rho:=\frac{1}{2}\sum_{\alpha\in\Phi^+}\alpha$ is half the sum of all positive roots.

The statistic $\size$ on $\core(\widetilde{\mathfrak{S}}_{a})$ recovers the number of boxes in the corresponding core in $\core(a)$ (\Cref{def:size_and_quadr} and~\Cref{prop:size_in_type_a}).

\medskip

Define \[\core_k(\widetilde{W}) := \{ \lambda \in \core(\widetilde{W}) : \size(\lambda)=k \}.\]  By specializing his character formula at a primitive $h$th root of unity, I.~G.~MacDonald has uniformly generalized~\Cref{thm:total_number} to all simply-laced types.

\begin{theorem}[\protect{\cite[Theorem 8.16]{macdonald1971affine}}]
For $\widetilde{W}$ the affine Weyl group of a simply-laced irreducible crystallographic root system $\Phi$ with Weyl group $W$, let $f(q)$ be the characteristic polynomial of a Coxeter element in $W$ (in the reflection representation). Then\footnote{\label{footnote:macdonald}The last equality in~\cite[Theorem 8.16]{macdonald1971affine} appears to have a small typo.}

\[\sum_{k=0}^\infty \left|\core_k(\widetilde{W})\right|q^k = \prod_{i=1}^\infty \left(f(q^i)(1-q^{hi})^n\right).\] 
\label{thm:macdonald_number}
\end{theorem}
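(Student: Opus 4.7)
The plan is to recognize the generating function
\[\sum_{k\geq 0} |\core_k(\wa)|q^k = \sum_{\lambda \in \Q} q^{\frac{g}{2}\|\lambda\|^2 - \langle \lambda, \rho \rangle}\]
as a theta series on the coroot lattice $\Q$ and derive its product expansion as a specialization of the Weyl--Kac denominator identity for the affine root system $\ar$. The simply-laced hypothesis enters through $g = h$, which turns the quadratic form appearing in $\size$ into the standard one on $\Q$ rescaled by the Coxeter number.

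First I would complete the square, writing
\[\tfrac{g}{2}\|\lambda\|^2 - \langle \lambda, \rho \rangle = \tfrac{h}{2}\left\|\lambda - \tfrac{\rho}{h}\right\|^2 - \frac{\|\rho\|^2}{2h},\]
and pull the constant outside the sum. The Freudenthal--de Vries strange formula yields $\|\rho\|^2/(2h) = n(h+1)/24$ in the simply-laced case, so the task reduces to proving that the shifted theta series $\Theta_{\rho/h}(q) := \sum_{\lambda \in \Q} q^{\frac{h}{2}\|\lambda - \rho/h\|^2}$ equals $q^{n(h+1)/24}\prod_{i=1}^\infty f(q^i)(1-q^{hi})^n$.

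Next I would obtain this product expansion from the affine Weyl--Kac denominator identity
\[\sum_{\waf \in \wa}\mathrm{sgn}(\waf)\,e^{\waf(\widetilde{\rho}) - \widetilde{\rho}} = \prod_{\widetilde{\alpha} \in \ar^+}(1-e^{-\widetilde{\alpha}})^{\mathrm{mult}(\widetilde{\alpha})},\]
using the decomposition $\wa = W \ltimes \Q$ and applying the principal specialization that sends each simple affine root $\alpha_i$ to $q$. The translation factor produces precisely the theta series $\Theta_{\rho/h}$, while the summation over $W$ produces a finite Weyl-type sum. On the product side, the imaginary affine roots contribute $\prod_{i\geq 1}(1-q^{hi})^n$, and the real affine roots contribute an additional product over finite positive roots and $i \geq 1$ which I would repackage along the exponent--height correspondence.

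The main obstacle is identifying the finite Weyl-group contribution with $\prod_{i=1}^\infty f(q^i)$. I would derive this by combining the classical principal specialization of the Weyl denominator (due to Macdonald and Kostant), which produces $\prod_{j=1}^n (1-q^{e_j+1})$ factors, with Coxeter's theorem that the eigenvalues of $c$ on the reflection representation are $e^{2\pi i e_j/h}$ for $j=1,\ldots,n$. This gives a product formula for $f(q)$ in terms of the exponents that matches the specialization of the real-root contribution level-by-level in $i$. Carefully bookkeeping the signs, the shift by $\rho/h$, and the $q^{-n(h+1)/24}$ normalization then assembles the claimed identity.
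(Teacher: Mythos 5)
The paper itself contains no proof of this statement: it is quoted from Macdonald (Theorem 8.16 of \emph{Affine root systems and Dedekind's $\eta$-function}), where it is obtained by a specialization of his affine identity at a primitive $h$th root of unity, so there is no internal argument to compare yours against. Your opening reduction is correct: since $\core_k(\wa)=\{\lambda\in\check{Q}:\size(\lambda)=k\}$, completing the square together with the strange formula (using $g=h$ and $\rho=\check\rho$ in the simply-laced case) reduces the theorem to the identity $\sum_{\lambda\in\check{Q}}q^{\frac{h}{2}\|\lambda-\rho/h\|^2-\frac{n(h+1)}{24}}=\prod_{i\geq 1}f(q^i)(1-q^{hi})^n$.

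The gap is in the central step. Under the principal specialization $e^{-\alpha_i}\mapsto q$ for all affine simple roots, the sum side of the Weyl--Kac denominator identity does \emph{not} factor as a shifted theta series of $\check{Q}$ times a finite Weyl-group sum: writing $\waf=t_{\gamma}w$ with $\gamma\in\check{Q}$ and $w\in W$, the weight $\waf(\widetilde\rho)-\widetilde\rho$ contains the cross term $\langle w(\rho),\gamma\rangle$ in its $\delta$-coordinate, so after specialization the exponent of $q$ couples $w$ and $\gamma$ and the double sum cannot be split as you assert. What the principal specialization of the denominator identity actually produces is the Macdonald $\eta$-power identity, a lattice sum weighted by products of linear forms, which is a different statement from the unweighted series $\sum_{\lambda\in\check{Q}}q^{\size(\lambda)}$ you need. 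To obtain the stated identity you need an additional input that realizes the unweighted theta series: either (i) the level-one basic representation, via the Frenkel--Kac character formula $\mathrm{ch}\,L(\Lambda_0)=e^{\Lambda_0}\bigl(\sum_{\gamma\in\check{Q}}e^{\gamma-\frac{1}{2}\|\gamma\|^2\delta}\bigr)\prod_{i\geq1}(1-e^{-i\delta})^{-n}$ (valid exactly in the simply-laced case), combined with the theorem on principally specialized characters --- this is where the factor $\prod_{i\geq1}f(q^i)$ genuinely comes from; or (ii) Macdonald's own route, a specialization at a primitive $h$th root of unity rather than the principal specialization, in which case the real work is to determine which terms of the alternating sum survive and to regroup the surviving real-root factors into $\det(1-qc)$-type products. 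As written, your plan to identify the ``finite Weyl-group contribution'' with $\prod_{i\geq1}f(q^i)$ by combining the finite principal specialization $\prod_{j}(1-q^{e_j+1})$ with Coxeter's eigenvalue theorem never addresses the coupling above, and that identification is precisely the crux rather than bookkeeping; so the proposal as it stands does not establish the theorem.
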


\medskip

Having generalized the notion of core and the statistic $\size$, we still require a definition of simultaneous cores.  In~\Cref{sec:dilations_and_cores}, for any positive integer $b$ that is relatively prime to $h$, we define the \defn{Sommers region}
\begin{multline*}
 \Sommers_\Phi(b):=\{x\in V:\langle x,\alpha\rangle\geq-t\text{ for all }\alpha\in \Phi_r\text{ and }
 \langle x,\alpha\rangle\leq t+1\text{ for all }\alpha\in \Phi_{h-r}\}.
\end{multline*}
This is the region in $V$ bounded by all the affine hyperplanes corresponding to affine roots of height $b$.

M.~Haiman has uniformly proven (for \emph{all} affine Weyl groups) the following generalization of~\Cref{thm:anderson_number}~\cite{haiman1994conjectures}, which we state in terms of $\Sommers_{\Phi}(b)$ using a result of E.~Sommers~\cite[Theorem 5.7]{sommers2005b}.  We remark that R.~Suter has also (presumably independently) observed essentially the same formula type-by-type~\cite{suter1998number}.

\begin{theorem}[{Number of $(\widetilde{W},b)$-cores; M.~Haiman~\cite{haiman1994conjectures}}]
$ $\newline
	For $\gcd(h,b)=1$, \[\left|\core(\widetilde{W},b)\right| = \frac{1}{|W|} \prod_{i=1}^n (b+e_i).\]
\label{thm:haiman_number}
\end{theorem}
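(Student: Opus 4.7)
The plan is to combine M.~Haiman's original uniform enumeration \cite{haiman1994conjectures} with E.~Sommers' geometric translation \cite[Theorem 5.7]{sommers2005b}, which together identify the cardinality of $\check{Q}\cap\Sommers_\Phi(b)$ with the advertised product.

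First, I would invoke Sommers' bijection. When $\gcd(h,b)=1$, the bounded simplex $\Sommers_\Phi(b)$ is cut out by the $n+1$ affine hyperplanes of height $b$ and serves as a fundamental domain for the action of a natural reflection subgroup of $\widetilde{W}$. Sommers then shows that each $W$-orbit on the finite torus $\check{Q}/b\check{Q}$ has a unique representative in $\Sommers_\Phi(b)\cap\check{Q}$. The coprimality hypothesis is essential: it guarantees that no coroot lattice point lies on a wall of $\Sommers_\Phi(b)$, so the bijection is clean and preserves cardinalities.

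Second, I would apply Haiman's formula, which counts the $W$-orbits on the finite torus $\check{Q}/b\check{Q}$ to equal $\frac{1}{|W|}\prod_{i=1}^n(b+e_i)$. Haiman's argument is a uniform character-theoretic computation; the coprimality $\gcd(h,b)=1$ forces the relevant stabilizers in $W$ to be trivial, so Burnside's lemma reduces the orbit count to a product over the exponents that is extracted from the Weyl denominator identity. Composing these two ingredients yields the formula.

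The main obstacle is that both inputs are themselves substantial: Haiman's uniform enumeration rests on a delicate character-theoretic identity, and Sommers' bijection requires a careful analysis of the walls of $\Sommers_\Phi(b)$ in terms of both positive and negative affine roots of height $b$, together with the verification of the claimed fundamental-domain property. Since the theorem is attributed to these authors, the appropriate ``proof'' here is the assembly just described rather than a fresh derivation. An alternative, more self-contained route, closer in spirit to the Johnson-style Ehrhart/polynomial method used later in this paper, would be to compute $|\check{Q}\cap\Sommers_\Phi(b)|$ as an Ehrhart-type expression in $b$ and use $\gcd(h,b)=1$ to collapse the quasi-polynomial to the required honest polynomial in $b$; however, identifying the resulting coefficients with the elementary symmetric functions of the exponents $e_1,\dots,e_n$ would require essentially the same insight as Haiman's original proof.
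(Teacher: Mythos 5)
Your strategy is essentially the paper's own: the paper gives no independent proof of this theorem, but attributes it to Haiman, states it in terms of $\Sommers_\Phi(b)$ via Sommers' Theorem 5.7, and only sketches an alternative, type-by-type count (map $\Sommers_\Phi(b)$ to $b\ac$ by the rigid motion $\wf$, count coweight points by the generating function $\prod_{i=0}^n(1-q^{c_i})^{-1}$, and divide by $f$ using the free $b\cycl$-action), so the citation assembly you describe is the intended ``proof.'' Two of your glosses are inaccurate, though neither breaks the assembly. First, the $W$-action on $\check{Q}/b\check{Q}$ is \emph{not} free, and coprimality does not make stabilizers trivial---if it did, the orbit count would be $b^n/|W|$ rather than $\frac{1}{|W|}\prod_{i=1}^n(b+e_i)$; Haiman's actual mechanism is Burnside/P\'olya counting together with the fixed-point count $|\mathrm{Fix}(w)|=b^{\dim V^w}$ and the Shephard--Todd--Solomon identity $\sum_{w\in W}t^{\dim V^w}=\prod_{i=1}^n(t+e_i)$ (not the Weyl denominator identity), with Dirichlet's theorem invoked to handle the primes dividing the coefficients of the highest root, as the paper's footnote records. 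Second, coroot points can certainly lie on walls of the closed region $\Sommers_\Phi(b)$: in type $A_1$ with $b=3$ one has $t=1$, and the point $\check{\alpha}$ satisfies $\langle\check{\alpha},\alpha\rangle=2=t+1$, yet it is one of the two points counted by the formula; the cleanliness of the correspondence with $W$-orbits on the finite torus is part of Sommers' theorem itself rather than a consequence of lattice points avoiding the walls.
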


We now state generalizations of~\Cref{thm:olsson_stanton_max_size,thm:johnson_expected_size,thm:us_variance}.

\begin{theorem}[{Maximum $\size$ of a $(\widetilde{W},b)$-core}]
$ $\newline
	For $\widetilde{W}$ a simply-laced affine Weyl group with $\gcd(h,b)=1$, \[\max_{\lambda \in \core(\widetilde{W},b)}(\size(\lambda)) = \frac{n(b^2-1)(h+1)}{24}.\]  This maximum is attained by a unique $\lambda \in \core(\widetilde{W},b)$.
\label{thm:max_size}
\end{theorem}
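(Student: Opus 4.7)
Since $\Phi$ is simply-laced, $g=h$, and completing the square gives
\[\size(x) = \tfrac{h}{2}\|x-\rho/h\|^2 - \tfrac{\|\rho\|^2}{2h}.\]
This exhibits $\size$ as a strictly convex quadratic on $V$, so its maximum over the compact simplex $\Sommers_\Phi(b)$ is attained only at vertices. The strategy is to compute $\size$ on vertices, show that its maximum value over vertices equals the claimed formula, and then argue that exactly one vertex lies in the coroot lattice $\check{Q}$.

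For any affine root $\alpha+k\delta$ of height $b$, the Euclidean distance from $\rho/h$ to the hyperplane $\{\langle x,\alpha\rangle=-k\}$ equals $(b/h)/\|\alpha\|$; in simply-laced type this is independent of $\alpha$, so $\rho/h$ is the incenter of $\Sommers_\Phi(b)$. Let $\beta_0,\ldots,\beta_n$ denote the finite roots underlying the $n+1$ bounding affine roots, satisfying $\sum m_i\beta_i=0$ for marks $m_i$ of the affine Dynkin diagram of $\widetilde{\Phi}$. Solving $\langle v_i-\rho/h,\beta_j\rangle=-b/h$ for $j\neq i$ characterizes the vertex $v_i$ opposite facet $i$ as
\[v_i - \rho/h = -\tfrac{b}{h}\,\rho^{(i)},\]
where $\rho^{(i)}$ is the half-sum of positive roots of the rank-$n$ subsystem $\Phi^{(i)}\subseteq\Phi$ with simple system $\{\beta_j\}_{j\neq i}$; hence $\size(v_i) = \frac{b^2\|\rho^{(i)}\|^2-\|\rho\|^2}{2h}$. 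Removing a mark-$1$ node recovers the Dynkin diagram of $\Phi$, so $\Phi^{(i)}\cong\Phi$ and $\|\rho^{(i)}\|=\|\rho\|$; removing a mark-$\geq 2$ node gives a strictly smaller subsystem (e.g.\ $4A_1$ at the central node of $\widetilde{D}_4$) with $\|\rho^{(i)}\|<\|\rho\|$, which can be established uniformly via the strange formula applied to each component and the convexity of $h\mapsto h(h+1)$. The maximum vertex value is therefore $\frac{(b^2-1)\|\rho\|^2}{2h}$, and the Freudenthal--de Vries strange formula $\|\rho\|^2=nh(h+1)/12$ (valid in simply-laced type with $\|\alpha\|^2=2$) simplifies this to $\frac{n(h+1)(b^2-1)}{24}$.

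For uniqueness, I would invoke the classical fact that the number of mark-$1$ nodes in the simply-laced affine Dynkin diagram of $\widetilde{\Phi}$ equals $|\check{P}/\check{Q}|$, and that these mark-$1$ vertices form a single free orbit under $\check{P}/\check{Q}$ acting by diagram automorphisms (equivalently, via the center of the extended affine Weyl group). Exactly one coset is $\check{Q}$ itself, so exactly one mark-$1$ vertex is a coroot lattice point, and strict convexity of $\size$ forces it to be the unique maximizer on $\core(\widetilde{W},b)$. The main obstacle, I anticipate, is establishing the strict inequality $\|\rho^{(i)}\|<\|\rho\|$ at mark-$\geq 2$ nodes in a uniform way across all simply-laced types---a case-by-case verification using the explicit marks of $\widetilde{A}_n$, $\widetilde{D}_n$, and $\widetilde{E}_{6,7,8}$ is straightforward but inelegant, and finding a purely combinatorial argument is the interesting bit.
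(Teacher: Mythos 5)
Your proposal is correct and follows essentially the same route as the paper's proof: there, $\size$ is transferred via $\wf$ to the statistic $\zise(x)=\frac{h}{2}\left\|x-b\rhoh\right\|^2-\frac{n(h+1)}{24}$ on $b\ac$, strict convexity reduces the problem to the vertices, the vertex differences are identified with the Weyl vectors $\rho_i$ of the subsystems obtained by deleting a node of the affine diagram (citing Macdonald's Proposition 7.3, which is exactly your $v_i-\rho/h=-\frac{b}{h}\rho^{(i)}$ computation done after rescaling), and the strange formula gives the closed form, with the key inequality $\|\rho_i\|^2\leq\|\rho\|^2$ verified case-by-case just as in your fallback. The only point you gloss over is that your uniqueness argument needs the $b$-scaled mark-one vertices to still represent distinct cosets of $\cwl/\Q$, i.e.\ $\gcd(b,f)=1$, which follows from $\gcd(b,h)=1$ exactly as in \Cref{thm:cycl_action_and_Q}.
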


In~\Cref{conj:w_b_is_maximal_in_weak}, we conjecture an analogue of J.~Vandehey's result for $(a,b)$-cores, using the inversion sets of the dominant affine elements corresponding to $(\widetilde{W},b)$-cores.

\begin{theorem}[{Expected $\size$ of a $(\widetilde{W},b)$-core}]
$ $\newline
	For $\widetilde{W}$ a simply-laced affine Weyl group with $\gcd(h,b)=1$, \[\Expt{\lambda \in \core(\widetilde{W},b)}{\size(\lambda)} = \frac{n(b-1)(h+b+1)}{24}.\]
\label{thm:expected_size}
\end{theorem}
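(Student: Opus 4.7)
The plan is to extend P.~Johnson's proof of \Cref{thm:johnson_expected_size} to all simply-laced types. Since the denominator $|\core(\widetilde{W},b)|$ is already supplied by M.~Haiman's \Cref{thm:haiman_number}, the real work is to compute the total-size numerator
\[S(b) := \sum_{\lambda \in \core(\widetilde{W},b)} \size(\lambda)\]
as an explicit polynomial in $b$, and then divide.

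First I would set up the Ehrhart-theoretic framework. The Sommers region $\Sommers_\Phi(b)$ is a rational simplex whose facet normals are the affine roots of height $b$, and whose defining inequalities scale linearly with $b$. Since $\size$ is a quadratic polynomial on $V$, weighted Ehrhart theory guarantees that $S(b)$ is a quasi-polynomial in $b$ of degree $n+2$, which becomes an honest polynomial when $b$ is restricted to integers coprime to $h$. The conjectured right-hand side equals $\frac{n(b-1)(h+b+1)}{24}\cdot |\core(\widetilde{W},b)|$, which is itself a polynomial in $b$ of degree $n+2$ by \Cref{thm:haiman_number}, so it suffices to match the two sides at $n+3$ suitably chosen values of $b$ coprime to $h$.

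Second, to actually carry out the sum I would complete the square,
\[\size(x) = \frac{h}{2}\left\| x - \frac{\rho}{h}\right\|^2 - \frac{\|\rho\|^2}{2h},\]
using $g=h$ in simply-laced types. Writing $S(b)$ as a leading Riemann-sum approximation to $\int_{\Sommers_\Phi(b)} \size(x)\, dx$ plus lower-order Ehrhart corrections reduces the problem to computing the centroid and second moment of the Sommers simplex---whose vertices, by E.~Sommers' description, are explicit rational multiples of the fundamental coweights that scale with $b$---and then controlling the correction terms via Ehrhart--Macdonald reciprocity at $b \mapsto -b$.

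Finally, I would pin down the polynomial by checking enough explicit values. The cheapest is $b=1$: $\Sommers_\Phi(1)$ is the fundamental alcove, whose only coroot-lattice point is the origin, so $S(1)=0$, which matches the vanishing $(b-1)$ factor on the right. Further small values, combined with reciprocity at negative $b$, close the interpolation. The main obstacle is uniform control of the lower-order Ehrhart corrections across simply-laced types: in type $A_{a-1}$, P.~Johnson exploited the cyclic symmetry of the affine Dynkin diagram to simplify the computation, whereas in types $D$ and $E$ one must either invoke a Weyl-group averaging argument, or work directly from the explicit vertex description of $\Sommers_\Phi(b)$.
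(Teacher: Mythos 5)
Your plan assumes away the two obstacles that the paper spends most of its machinery resolving. The central gap is your first step: weighted Ehrhart theory (\Cref{thm:generized_ehrhart}) applies to the dilates $b\mathcal{P}$ of a \emph{fixed} rational polytope, but $\Sommers_\Phi(b)$ is not such a family. Writing $b=th+r$, its defining inequalities are $\langle x,\alpha\rangle\geq -t$ for $\alpha\in\Phi_r$ and $\langle x,\alpha\rangle\leq t+1$ for $\alpha\in\Phi_{h-r}$, so the facet normals themselves change with the residue of $b$ modulo $h$; the inequalities do not ``scale linearly with $b$,'' and neither the quasipolynomiality of $S(b)$ nor your degree count follows from the theorem you cite. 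The paper's route is to first conjugate by the explicit element $\wf\in\wa$ carrying $\Sommers_\Phi(b)$ to the genuine dilation $b\ac$ (\Cref{thm:wf}), and then to use the simply-laced hypothesis ($\rho=\check\rho$, $g=h$, and $W$-invariance of the quadratic form) to show the transported statistic is $\zise(x)=\frac h2\|x-\frac{b\rho}{h}\|^2-\frac{n(h+1)}{24}$ (\Cref{thm:simple_zise}); only after this does Euler--Maclaurin theory apply. Your completion of the square is for $\size$ on the Sommers region and never performs this transport. A second unjustified claim is that restricting to $\gcd(b,h)=1$ yields an ``honest polynomial'': even for $b\ac$ the weighted enumerator is a quasipolynomial of period $\lcm(c_1,\dots,c_n)$ ($2$ in type $\widetilde D_n$; $6,12,60$ in types $\widetilde E_{6,7,8}$), and integers coprime to $h$ need not fall in a single residue class modulo that period (in $\widetilde E_6$, $h=12$ and the period is $6$). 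So interpolation at $n+3$ values of $b$ coprime to $h$ does not determine $S(b)$; one must fix a residue class and work component by component, and the eventual polynomiality of the combined answer is a conclusion of the computation, not an input.

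Even granting the framework, your interpolation data is too thin. ``Leading Riemann-sum plus lower-order corrections controlled by reciprocity'' is not an argument: the corrections are exactly what must be computed. The paper's inputs are (i) Johnson's exponent trick --- reciprocity plus the fact that $e_i\ac$ has no interior coweight points gives zeroes at $b=-e_i$, but only exponents in the admissible residue class count (in $\widetilde D_n$ only the $n-1$ odd exponents; the remaining exponent $n-1$ is repeated or even and unusable), (ii) the evaluations at $b=1$ and, by reciprocity, $-h-1$, and (iii) a genuinely nontrivial explicit evaluation at one more admissible value: the hand enumeration of the coweight points of $3\ac$ in type $\widetilde D_n$ (\Cref{sec:type_d_expected}), and Normaliz computations of the full quasipolynomials in types $\widetilde E_{6,7,8}$ (\Cref{sec:e_series}). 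You supply only $S(1)=0$. Finally, you sum over $\Sommers_\Phi(b)\cap\Q$ directly; the paper instead passes to the coweight lattice $\cwl$, with respect to which $\ac$ is rational, and uses the free $b\cycl$-action preserving $\zise$ (\Cref{thm:cycl_action_and_Q}, \Cref{thm:cycl_action_and_quad}) to divide by the index of connection $f$ --- some substitute for this step is needed before any of the explicit evaluations become tractable.
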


The appearence of the number $24$ in~\Cref{thm:max_size,thm:expected_size} is explained by~\Cref{thm:explicit_quadr} and~\Cref{def:size_and_quadr}, where we relate the statistic $\size$ to an easily-computed quadratic form $\quadr$ whose value at $0$ is $-\frac{\langle \rho,\rho \rangle}{2g}$.  By the ``strange formula'' of H.~Freudenthal and H.~de Vries (\Cref{thm:identities}),  \[-\frac{\langle \rho,\rho \rangle}{2g}=-\frac{n(h+1)}{24},\] which accounts for the constant term in both theorems.

We also compute a uniform formula for the variance $\mathbb{V}$ of the statistic $\size$ over $\core(\widetilde{W},b)$.

\begin{theorem}[{Variance of $\size$ on $(\widetilde{W},b)$-cores}]
$ $\newline
	For $\widetilde{W}$ a simply-laced affine Weyl group with $\gcd(h,b)=1$, \[\Var{\lambda \in \core(\widetilde{W},b)}{\size(\lambda)} = \frac{nhb(b-1)(h+b)(h+b+1)}{1440}.\]
\label{thm:variance}
\end{theorem}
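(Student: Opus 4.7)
The plan is to extend the Ehrhart-theoretic ``polynomial method'' used to establish~\Cref{thm:johnson_expected_size,thm:expected_size}. Writing
\[
\Var{\lambda\in\core(\widetilde{W},b)}{\size(\lambda)} = \Expt{\lambda\in\core(\widetilde{W},b)}{\size(\lambda)^2} - \left(\frac{n(b-1)(h+b+1)}{24}\right)^2,
\]
and invoking~\Cref{thm:haiman_number,thm:expected_size}, the problem reduces to evaluating $T_2(b):=\sum_{\lambda\in\core(\widetilde{W},b)}\size(\lambda)^2$. Since $\size(x)=\tfrac{g}{2}\|x\|^2-\langle x,\rho\rangle$ is a degree-two polynomial in $x$ and $\Sommers_\Phi(b)$ is (up to translation) the $b$-dilation of a fixed rational simplex, the sum $T_2(b)$ is a quasi-polynomial in $b$ of degree at most $n+4$; the hypothesis $\gcd(h,b)=1$ pins it to one residue class, so $T_2(b)$ is a genuine polynomial in $b$ and is therefore determined by finitely many values.

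First I would expand $\size(x)^2=\tfrac{g^2}{4}\|x\|^4-g\|x\|^2\langle x,\rho\rangle+\langle x,\rho\rangle^2$ and use the $W$-action on the ambient space to symmetrize. The Weyl-invariant term $\|x\|^4$, summed over $\core(\widetilde{W},b)$, can be pushed to a sum over the larger $W$-symmetric region $W\cdot \Sommers_\Phi(b)$, which is a $b$-dilation of a fixed $W$-symmetric polytope and hence amenable to Ehrhart computation. The two non-invariant pieces are handled by averaging $\rho$ over $W$: we have $\sum_{w\in W}w(\rho)=0$, and by Schur's lemma applied to the irreducible reflection representation, $\sum_{w\in W}\langle x,w(\rho)\rangle^2$ is a positive scalar multiple of $\|x\|^2$, the scalar being $\tfrac{|W|\,\|\rho\|^2}{n}$. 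This reduces $T_2(b)$ to a linear combination of the Ehrhart-style sums $\sum\|x\|^4$, $\sum\|x\|^2$, and $\sum 1$ over the lattice points of the relevant dilated polytope.

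The main obstacle is the uniform evaluation of $\sum_{x\in \check{Q}\cap bP} \|x\|^4$ across simply-laced types. I would handle this by leveraging the explicit parametrization of $\core(\widetilde{W},b)$ by minimal-length coset representatives in $\widetilde{W}$ used in the proof of~\Cref{thm:expected_size}, so that $\|x\|^2$ and $\|x\|^4$ become explicit polynomial functions of these coordinates whose sums can be evaluated using power-sum identities involving the exponents $e_1\leq\cdots\leq e_n$. The ``strange formula'' $\|\rho\|^2=\tfrac{gn(h+1)}{12}$ then produces the factor of $24$ in the denominator exactly as it does in~\Cref{thm:expected_size}, while a further factor of $60$ arises from the fourth-power Ehrhart sums over a dilated simplex, together accounting for the denominator $1440$. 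Matching the resulting polynomial in $b$ to $\tfrac{nhb(b-1)(h+b)(h+b+1)}{1440}$ completes the proof.
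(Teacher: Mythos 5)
There is a genuine gap---in fact several---at the points where your outline asserts the structure that makes the polynomial method work. First, $\Sommers_\Phi(b)$ is \emph{not} ``up to translation'' the $b$-dilation of a fixed rational simplex: as $b \bmod h$ varies the defining hyperplanes (and hence the orientation) of $\Sommers_\Phi(b)$ change, and the correct statement is that $\Sommers_\Phi(b)$ is carried onto $b\ac$ by a rigid motion $\wf = t_\mu w \in \wa$ involving a nontrivial Weyl group element (\Cref{thm:wf}). One must then transport the statistic along $\wf$, and it is only in the simply-laced case that the transported statistic simplifies to $\zise(x)=\frac{h}{2}\|x-b\rhoh\|^2-\frac{n(h+1)}{24}$ (\Cref{thm:simple_zise}); your argument never uses $g=h$, $\rho=\check\rho$ at all, which is a warning sign, since this simplification is precisely the origin of the simply-laced hypothesis in the statement. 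Second, the symmetrization step fails: $\Sommers_\Phi(b)\cap\check{Q}$ is not $W$-stable, every $W$-image $w\,\Sommers_\Phi(b)$ contains $0$ and a large common region, so the $W$-translates overlap massively and you cannot push $\sum\|x\|^4$ to a sum over $W\cdot\Sommers_\Phi(b)$, nor kill the cross term by averaging $\rho$ over $W$ (the Schur-lemma identity for $\sum_w\langle x,w\rho\rangle^2$ is true but unusable here because the point set being summed over is not $W$-invariant). Third, $\gcd(h,b)=1$ does not pin $b$ to a single residue class modulo the Ehrhart period $m(\widetilde{W})=\lcm(c_1,\dots,c_n)$ in general (e.g.\ in type $\widetilde{E}_6$ the period is $6$ while $b$ coprime to $h=12$ may be $1$ or $5$ mod $6$), so the jump from quasipolynomial to polynomial needs justification; the paper only obtains it in the $E$ types a posteriori from a finite computation.

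Beyond these incorrect steps, the proposal is missing the data that actually determines the degree-$(n+4)$ polynomial. The paper's route is: pass to coweight points of $b\ac$ and recover coroot points by the free $b\cycl$-action, which preserves $\zise$ (\Cref{thm:cycl_action_and_Q,thm:cycl_action_and_quad}); harvest zeroes of $\ac^{\check{\Lambda}}_{\zise^2}(b)$ at the exponents coprime to $h$ via Ehrhart reciprocity, plus the pair $b=1$ and $b=-h-1$ (\Cref{rem:points_for_free}) and the constant term; and then supply the remaining values by explicit evaluation at small dilations ($b=2$ in type $\widetilde{A}_n$, $b=3,5$ in type $\widetilde{D}_n$, and a finite \textsf{Normaliz} check in the $\widetilde{E}$ types). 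Your suggestion to evaluate $\sum\|x\|^4$ uniformly ``using power-sum identities involving the exponents'' is asserted rather than demonstrated, and the paper explicitly emphasizes that these computations are type-dependent (and that already the third moment admits no uniform formula), so this step cannot be waved through. As written, the proposal does not yield a proof; to repair it you would need to re-introduce $\wf$, the simply-laced simplification of $\zise$, the $b\cycl$ reduction from $\check{\Lambda}$ to $\check{Q}$, and concrete interpolation data for each type.
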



\begin{remark}
For affine types outside of $\widetilde{A}_n$, we did not compute any moments beyond the second (but see~\Cref{sec:integrals}).  Our justification is that we verified that there is no possible assignment of $a \mapsto \{n+1,h\}$ in~\Cref{thm:us_third_moment}---where each factor of $a$ is assigned independently---that results in a uniform product formula simultaneously valid for all simply-laced affine Weyl groups.  This leaves open the possibility that there are ``hidden'' factors of powers of $\frac{n+1}{h}$, though we suspect that this is not the case.
\label{rem:no_uniform}
\end{remark}

We stress that although the statements of~\Cref{thm:max_size,thm:expected_size,thm:variance} are uniform for simply-laced types, many of our proofs (especially the computations in~\Cref{sec:proofs}) are very much type-dependent.   It would be desirable to have uniform proofs.

\medskip

\subsection{Proof Strategy and Summary}
\label{sec:technical_difficulties}

We outline here the two technical difficulties (both already present in the type $A_n$ case studied in~\cite{johnson2015lattice}), the explanation and resolution of which will occupy much of~\Cref{sec:hyp_and_weyl,sec:dilations_and_cores,sec:statistics}.  Given a vector space $V$ and an $n$-dimensional polytope $P$ in $V$ whose vertices are elements of a lattice $L$---that is, $P$ is an \defn{integer polytope} with respect to $L$---Ehrhart theory tells us that the number of lattice points of $L$ inside the $b$-th dilation of $P$ is given by a polynomial $\mathcal{P}^L(b)$ of degree $n$ in $b$.  Ehrhart theory extends to Euler-Maclaurin theory (see~\Cref{sec:weighted_ehrhart}), which says that given a polynomial $p$ on $V$ of degree $m$, the sum
\[\mathcal{P}^L_p(b):=\sum_{x \in b \mathcal{P} \cap L} p(x).\]
over these lattice points gives a polynomial $\mathcal{P}^L_p(b)$ of degree $n+m$ in $b$.

To prove~\Cref{thm:us_third_moment,thm:expected_size,thm:variance}, we wish to use Ehrhart theory combined with the polynomial method to determine \[\sum_{\lambda \in \Sommers_\Phi(b) \cap \check{Q}} \size^i(\lambda) \text{ for } i=1,2,3.\]
  
  The trouble is that Ehrhart theory manifestly does not apply: $\Sommers_\Phi(b)$ is \emph{neither} the dilation of a polytope, \emph{nor} are its vertices in the coroot lattice $\check{Q}$ for general values of $b$.
  
The first obstacle is that $\Sommers_\Phi(b)$ is not the \emph{dilation} of a polytope---as the residue class of $b \mod h$ changes, so does the orientation of $\Sommers_\Phi(b)$. We therefore first translate the study of $\Sommers_\Phi(b)$ to the study of $b\ac$---the $b$-fold dilation of the fundamental alcove---which remains in a fixed orientation as $b$ varies:
     \begin{itemize}
		\item \Cref{thm:wf} uniformly proves that $\Sommers_\Phi(b)$ may be mapped bijectively to $b\ac$ via an explicit rigid motion $\wf$ (filling a gap in the literature); and
         \item    Using the rigid motion $\wf$, we translate the statistic $\size$ on $\core(\widetilde{W},b)$ onto a statistic $\sizeshift$ on $b\ac \cap \check{Q}$ in~\Cref{cor:size_on_ba}.
     \end{itemize}

The second obstacle is that $\Sommers_\Phi(b)$---and therefore also $b\ac$---is not an \emph{integer polytope} with respect to the coroot lattice $\check{Q}$.  Following P.~Johnson, Ehrhart theory extends to rational polytopes, at the cost of trading polynomiality for quasipolynomiality (with an explicit period).  It is somewhat easier to translate the study of the coroots $b\ac\cap \check{Q}$ to the study of the coweights $b\ac \cap \check{\Lambda}$:
    \begin{itemize}
        \item We recall in~\Cref{prop:barational} that the polytope $bA_0$ is a rational polytope in the coweight lattice $\check{\Lambda}$;
        \item The coroot lattice $\check{Q}$ is a lattice of index $f \in \mathbb{N}$ (the \defn{index of connection}) inside $\check{\Lambda}$.  We define the group $\cycl=\check{\Lambda}/\check{Q}$ in~\Cref{sec:symmetry_of_digram}, and we prove in~\Cref{thm:cycl_action_and_Q} that each $b\cycl$-orbit of $b\ac \cap\check{\Lambda}$ contains exactly one point of $b\ac \cap \check{Q}$; and
        \item We show in~\Cref{thm:cycl_action_and_quad} that the action of $b\cycl$ preserves $\size$.
    \end{itemize}
\medskip

The remainder of this paper is structured as follows.  In~\Cref{sec:affine_weyl_groups} we review the basic notions of finite and affine Weyl groups.  In~\Cref{sec:affine_sym_and_cores}, we review how $a$-cores fit into the framework of affine Weyl groups as the special case $\widetilde{W}=\widetilde{\mathfrak{S}}_a$.  In~\Cref{sec:dilations_and_cores}, we generalize $a$-cores to $\widetilde{W}$ using the Sommers region $\Sommers_\Phi(b)$, and we relate $\Sommers_\Phi(b)$ and $b\ac$.  We also recall M.~Haiman's~\Cref{thm:haiman_number} and prove~\Cref{thm:max_size}.  In~\Cref{sec:statistics}, we generalize the statistic $\size$ to $\Sommers_\Phi(b)$ for all affine Weyl groups, and we study how it transforms to a statistic on $b\ac$.  In~\Cref{sec:proofs} for $b$ coprime to $h$, we compute the relevant residue classes of the Ehrhart quasipolynomial $\ac^{\check{\Lambda}}_{\zise^i}(b)$ to conclude~\Cref{thm:us_variance,thm:us_third_moment,thm:expected_size,thm:variance}.  In~\Cref{sec:open_problems}, we state some open problems and conjectures regarding higher moments, non-simply-laced types, and combinatorial models.

\section{Affine Weyl Groups}
\label{sec:affine_weyl_groups}

In this section, we introduce finite and affine root systems (\Cref{sec:root_systems,sec:affine_roots}) and associated data.  We also define their associated hyperplane arrangements and Weyl groups (\Cref{sec:hyp_and_weyl}).  Finally, we define the abelian group $\cycl$, which allows us to relate the coroot and coweight lattices in~\Cref{thm:cycl_action_and_Q}.

\subsection{Root Systems}
\label{sec:root_systems}
Let $\Phi$ be an irreducible crystallographic root system of rank $n$ with ambient space $V$.
Define the \defn{root lattice} $Q$ of $\Phi$ as the lattice in $V$ generated by $\Phi$.
Let $\Phi^+$ be a system of \defn{positive roots} for it and let $\simp=\{\alpha_1,\alpha_2,\ldots,\alpha_n\}$ be the corresponding system of \defn{simple roots}.
Then $\Phi$ is the disjoint union of $\Phi^+$ and $-\Phi^+$, and $\simp$ is a basis for $V$.

For $\alpha\in\Phi$, we may write $\alpha$ in the basis of simple roots as $\alpha=\sum_{i=1}^n a_i\alpha_i$, where the coefficients $a_i$ are either all nonnegative or all nonpositive.  We define the \defn{height} of $\alpha$ as the sum of the coefficients:
$\h(\alpha):=\sum_{i=1}^n a_i$.
Notice that $\h(\alpha)>0$ if and only if $\alpha\in\Phi^+$ and $\h(\alpha)=1$ if and only if $\alpha\in\simp$.
There is a unique root \[\amax =\sum_{i=1}^n c_i\alpha_i \in\Phi\] of maximal height, which we call the \defn{highest root} of $\Phi$. We choose to normalize the inner product $\langle\cdot,\cdot\rangle$ on $V$ in such a way that $\|\amax\|^2=2$.
We define the \defn{Coxeter number} of $\Phi$ as $h:=1+\h(\amax)=1+\sum_{i=1}^n c_i$.
Let $\rho:=\frac{1}{2}\sum_{\alpha\in\Phi^+}\alpha$.

For a root $\alpha\in\Phi$, define its \defn{coroot} as $\check{\alpha}:=\frac{2\alpha}{\|\alpha\|^2}$.
Define the \defn{dual root system} of $\Phi$ as $\Phi^{\vee}:=\left\{\check{\alpha}:\alpha\in\Phi\right\}$. It is itself an irreducible crystallographic root system.
We say that $\Phi$ is \defn{simply-laced} if all roots $\alpha\in\Phi$ satsify $\|\alpha\|^2=2$. So in this case $\check{\alpha}=\alpha$ for all $\alpha\in\Phi$ and thus $\Phi=\Phi^{\vee}$.

Define the \defn{coroot lattice} $\Q$ of $\Phi$ as the lattice in $V$ generated by $\Phi^{\vee}$.
Let $\check{\rho}:=\frac{1}{2}\sum_{\alpha\in\Phi^+}\check{\alpha}$.

Finally, let $(\cw_1,\cw_2,\ldots,\cw_n)$ be the basis that is dual to the basis $(\alpha_1,\alpha_2,\ldots,\alpha_n)$ of $V$ consisting of the simple roots, so that $\langle \cw_i,\alpha_j\rangle=\delta_{i,j}$.
Then $\cw_1,\cw_2,\ldots,\cw_n$ are the \defn{fundamental coweights}. They are a basis of the coweight lattice 
\[\cwl:=\{x\in V:\langle x,\alpha\rangle\in\Z\text{ for all }\alpha\in\Phi\}\]
of $\Phi$. We also have
\[\check{\rho}=\sum_{i=1}^n\cw_i,\]
so that $\langle\check{\rho},\alpha\rangle=1$ for all $\alpha\in\simp$ and thus $\langle\check{\rho},\alpha\rangle=\h(\alpha)$ for all $\alpha\in\Phi$.

We can write the highest root $\amax$ (which is its own coroot) in terms of the coroots corresponding to the simple roots:
\[\amax=\sum_{i=1}^n d_i\check{\alpha_i}.\]
Then we define the \defn{dual Coxeter number} of $\Phi$ as $g:=1+\sum_{i=1}^nd_i$.

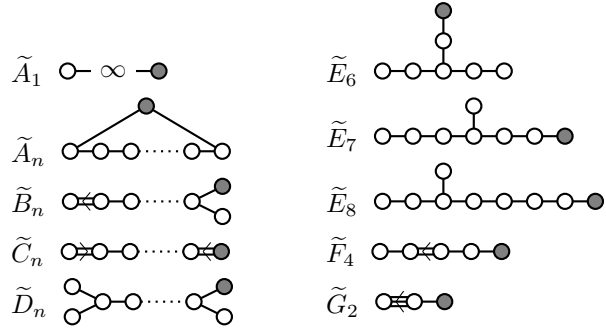
\begin{figure}[tbp]
\begin{center}
\begin{tabular}{cc}
 \parbox[t]{.3\textwidth}{
  \begin{tikzpicture}[scale=.2]
    \draw (-1,0) node[anchor=east]  {$\widetilde{A}_1$};
    \draw[thick] (0 cm,0) -- (6 cm,0) node [midway,fill=white] {$\infty$};
    \draw[thick,solid,fill=white] (0cm,0) circle (.5cm);
    \draw[thick,solid,fill=gray] (6cm,0) circle (.5cm);
  \end{tikzpicture}
  
  \begin{tikzpicture}[scale=.2]
    \draw (-1,0) node[anchor=east]  {$\widetilde{A}_n$};
    \draw[thick] (0 cm,0) -- (4 cm,0);
    \draw[dotted,thick] (4 cm,0) -- (8 cm,0);
    \draw[thick] (8 cm,0) -- (10 cm,0);
    \draw[thick] (0 cm,0) -- (5 cm,3);
    \draw[thick] (5 cm,3) -- (10 cm,0);
    \draw[thick,solid,fill=white] (0cm,0) circle (.5cm);
    \draw[thick,solid,fill=white] (2cm,0) circle (.5cm);
    \draw[thick,solid,fill=white] (4cm,0) circle (.5cm);
    \draw[thick,solid,fill=white] (8cm,0) circle (.5cm);
    \draw[thick,solid,fill=white] (10cm,0) circle (.5cm);
    \draw[thick,solid,fill=gray] (5cm,3) circle (.5cm);
  \end{tikzpicture}
  
  \begin{tikzpicture}[scale=.2]
    \draw (-1,0) node[anchor=east]  {$\widetilde{B}_n$};
    \draw[thick] (0 cm,-.2) -- (2 cm,-.2);
    \draw[thick] (0 cm,.2) -- (2 cm,.2);
    \draw[thin] (1.3 cm,.6) -- (.8 cm,0);
    \draw[thin] (1.3 cm,-.6) -- (.8 cm,0);
    \draw[thick] (2 cm,0) -- (4 cm,0);
    \draw[dotted,thick] (4 cm,0) -- (8 cm,0);
    \draw[thick] (8 cm,0) -- (10 cm,1);
    \draw[thick] (8 cm,0) -- (10 cm,-1);
    \draw[thick,solid,fill=white] (0cm,0) circle (.5cm);
    \draw[thick,solid,fill=white] (2cm,0) circle (.5cm);
    \draw[thick,solid,fill=white] (4cm,0) circle (.5cm);
    \draw[thick,solid,fill=white] (8cm,0) circle (.5cm);
    \draw[thick,solid,fill=gray] (10cm,1) circle (.5cm);
    \draw[thick,solid,fill=white] (10cm,-1) circle (.5cm);
  \end{tikzpicture}
  
  \begin{tikzpicture}[scale=.2]
    \draw (-1,0) node[anchor=east]  {$\widetilde{C}_n$};
    \draw[thick] (0 cm,-.2) -- (2 cm,-.2);
    \draw[thick] (0 cm,.2) -- (2 cm,.2);
    \draw[thin] (.7 cm,.6) -- (1.2 cm,0);
    \draw[thin] (.7 cm,-.6) -- (1.2 cm,0);
    \draw[thick] (2 cm,0) -- (4 cm,0);
    \draw[dotted,thick] (4 cm,0) -- (8 cm,0);
    \draw[thick] (8 cm,-.2) -- (10 cm,-.2);
    \draw[thick] (8 cm,.2) -- (10 cm,.2);
    \draw[thin] (9.3 cm,.6) -- (8.8 cm,0);
    \draw[thin] (9.3 cm,-.6) -- (8.8 cm,0);
    \draw[thick,solid,fill=white] (0cm,0) circle (.5cm);
    \draw[thick,solid,fill=white] (2cm,0) circle (.5cm);
    \draw[thick,solid,fill=white] (4cm,0) circle (.5cm);
    \draw[thick,solid,fill=white] (8cm,0) circle (.5cm);
    \draw[thick,solid,fill=gray] (10cm,0) circle (.5cm);
  \end{tikzpicture}

  \begin{tikzpicture}[scale=.2]
    \draw (-1,0) node[anchor=east]  {$\widetilde{D}_n$};
    \draw[thick] (0 cm,-1) -- (2 cm,0);
    \draw[thick] (0 cm,1) -- (2 cm,0);
    \draw[thick] (2 cm,0) -- (4 cm,0);
    \draw[dotted,thick] (4 cm,0) -- (8 cm,0);
    \draw[thick] (8 cm,0) -- (10 cm,1);
    \draw[thick] (8 cm,0) -- (10 cm,-1);
    \draw[thick,solid,fill=white] (0cm,-1) circle (.5cm);
    \draw[thick,solid,fill=white] (0cm,1) circle (.5cm);
    \draw[thick,solid,fill=white] (2cm,0) circle (.5cm);
    \draw[thick,solid,fill=white] (4cm,0) circle (.5cm);
    \draw[thick,solid,fill=white] (8cm,0) circle (.5cm);
    \draw[thick,solid,fill=gray] (10cm,1) circle (.5cm);
    \draw[thick,solid,fill=white] (10cm,-1) circle (.5cm);
  \end{tikzpicture}}
  & 
  \parbox[t]{.3\textwidth}{   \begin{tikzpicture}[scale=.2]
    \draw (-1,0) node[anchor=east]  {$\widetilde{E}_6$};
    \draw[thick] (0 cm,0) -- (8 cm,0);
    \draw[thick] (4 cm,0) -- (4 cm,4);
    \draw[thick,solid,fill=white] (0cm,0) circle (.5cm);
    \draw[thick,solid,fill=white] (2cm,0) circle (.5cm);
    \draw[thick,solid,fill=white] (4cm,0) circle (.5cm);
    \draw[thick,solid,fill=white] (6cm,0) circle (.5cm);
    \draw[thick,solid,fill=white] (8cm,0) circle (.5cm);
    \draw[thick,solid,fill=white] (4cm,2) circle (.5cm);
    \draw[thick,solid,fill=gray] (4cm,4) circle (.5cm);
  \end{tikzpicture}
  
    \begin{tikzpicture}[scale=.2]
    \draw (-1,0) node[anchor=east]  {$\widetilde{E}_7$};
    \draw[thick] (0 cm,0) -- (12 cm,0);
    \draw[thick] (6 cm,0) -- (6 cm,2);
    \draw[thick,solid,fill=white] (0cm,0) circle (.5cm);
    \draw[thick,solid,fill=white] (2cm,0) circle (.5cm);
    \draw[thick,solid,fill=white] (4cm,0) circle (.5cm);
    \draw[thick,solid,fill=white] (6cm,0) circle (.5cm);
    \draw[thick,solid,fill=white] (8cm,0) circle (.5cm);
    \draw[thick,solid,fill=white] (10cm,0) circle (.5cm);
    \draw[thick,solid,fill=gray] (12cm,0) circle (.5cm);
    \draw[thick,solid,fill=white] (6cm,2) circle (.5cm);
  \end{tikzpicture}
  
      \begin{tikzpicture}[scale=.2]
    \draw (-1,0) node[anchor=east]  {$\widetilde{E}_8$};
    \draw[thick] (0 cm,0) -- (14 cm,0);
    \draw[thick] (4 cm,0) -- (4 cm,2);
    \draw[thick,solid,fill=white] (0cm,0) circle (.5cm);
    \draw[thick,solid,fill=white] (2cm,0) circle (.5cm);
    \draw[thick,solid,fill=white] (4cm,0) circle (.5cm);
    \draw[thick,solid,fill=white] (6cm,0) circle (.5cm);
    \draw[thick,solid,fill=white] (8cm,0) circle (.5cm);
    \draw[thick,solid,fill=white] (10cm,0) circle (.5cm);
    \draw[thick,solid,fill=white] (12cm,0) circle (.5cm);
    \draw[thick,solid,fill=gray] (14cm,0) circle (.5cm);
    
    \draw[thick,solid,fill=white] (4cm,2) circle (.5cm);
  \end{tikzpicture}

     \begin{tikzpicture}[scale=.2]
    \draw (-1,0) node[anchor=east]  {$\widetilde{F}_4$};
    \draw[thick] (0 cm,0) -- (2 cm,0);
    \draw[thick] (2 cm,-.2) -- (4 cm,-.2);
    
    \draw[thin] (3.3 cm,.6) -- (2.8 cm,0);
    \draw[thin] (3.3 cm,-.6) -- (2.8 cm,0);
    \draw[thick] (2 cm,.2) -- (4 cm,.2);

    \draw[thick] (4 cm,0) -- (8 cm,0);
    \draw[thick,solid,fill=white] (0cm,0) circle (.5cm);
    \draw[thick,solid,fill=white] (2cm,0) circle (.5cm);
    \draw[thick,solid,fill=white] (4cm,0) circle (.5cm);
    \draw[thick,solid,fill=white] (6cm,0) circle (.5cm);
    \draw[thick,solid,fill=gray] (8cm,0) circle (.5cm);
  \end{tikzpicture}
  
    \begin{tikzpicture}[scale=.2]
    \draw (-1,0) node[anchor=east]  {$\widetilde{G}_2$};
    \draw[thick] (2 cm,0) -- (4 cm,0);
    \draw[thick] (0 cm,-.3) -- (2 cm,-.3);
    \draw[thick] (0 cm,.3) -- (2 cm,.3);
    \draw[thick] (0 cm,0) -- (2 cm,0);
    \draw[thin] (1.3 cm,.7) -- (.8 cm,0);
    \draw[thin] (1.3 cm,-.7) -- (.8 cm,0);
    \draw[thick,solid,fill=white] (0cm,0) circle (.5cm);
    \draw[thick,solid,fill=white] (2cm,0) circle (.5cm);
    \draw[thick,solid,fill=gray] (4cm,0) circle (.5cm);
  \end{tikzpicture}}
  \end{tabular}
  
\end{center}
\caption{The finite and affine Dynkin diagrams (the affine node is marked in gray).}
\label{fig:dynkin}
\end{figure}

\subsection{Weyl Groups}
\label{sec:hyp_and_weyl}
For $\alpha\in\Phi$ and $k\in\Z$, define the affine hyperplane
\[H_{\alpha}^k:=\{x\in V:\langle x,\alpha\rangle=k\}\]
and let 
\[s_{\alpha}^k:x\mapsto x-\frac{2\langle x,\alpha\rangle-k}{\langle\alpha,\alpha\rangle}\]
be the reflection through $H_{\alpha}^k$.  We write $H_{\alpha}$ for the hyperplane $H_{\alpha}^0$ and $s_{\alpha}$ for the reflection $s_{\alpha}^0$.

\medskip

Let $W$ be the group generated by $\{s_{\alpha}:\alpha\in\Phi\}$, called the \defn{Weyl group} of $\Phi$.
It acts on $\Phi$ and is minimally generated by the set $S:=\{s_{\alpha_1},s_{\alpha_2},\ldots,s_{\alpha_n}\}$ of \defn{simple reflections} of $\Phi$.  A \defn{Coxeter element} is a product of the simple reflections in any order, each appearing exactly once.

The \defn{Coxeter arrangement} of $\Phi$ is the central hyperplane arrangement in $V$ given by all the hyperplanes $H_{\alpha}$ for $\alpha\in\Phi$.
The complement $V \setminus \{H_\alpha\}_{\alpha \in \Phi}$ falls apart into connected components, which we call \defn{chambers}. The Weyl group $W$ acts simply transitively on the set of chambers, so we define the \defn{dominant chamber}
\[C:=\{x\in V:\langle x,\alpha\rangle>0\text{ for all }\alpha\in\simp\}\]
and write any chamber as $wC$ for a unique $w\in W$.

\medskip

Let $\wa$ be the group generated by $\{s_{\alpha}^k:\alpha\in\Phi\text,\text{ }k\in\Z\}$, called the affine \defn{Weyl group} of $\Phi$.
It is minimally generated by the set $\widetilde{S}:=S\cup\{s_{\tilde{\alpha}}^1\}$ of \defn{affine simple reflections} of $\Phi$. So we may write any $\waf\in\wa$ as a word in the generators on $\widetilde{S}$. The minimal length of such a word is called the \defn{length} $l(\waf)$ of $\waf$.
It is not hard to see that $\wa$ acts $\Q$. To any $y\in V$, there is an associated translation 
\begin{align*}
 t_{y}:V&\rightarrow V\\
 x&\mapsto x+y.
\end{align*}
If we identify $\Q$ with the corresponding group of translations acting on the affine space $V$, then we may write $\wa=W\ltimes\Q$ as a semidirect product.

The \defn{affine Coxeter arrangement} of $\Phi$ is the affine hyperplane arrangement in $V$ given by all the affine hyperplanes $H_{\alpha}^k$ for $\alpha\in\Phi$ and $k\in\Z$.
Its complement falls apart into connected components, which we call \defn{alcoves}. The affine Weyl group $\wa$ acts simply transitively on the set of alcoves, so we define the (closed) \defn{fundamental alcove} as
\[\ac:=\{x\in V:\langle x,\alpha\rangle\geq 0\text{ for all }\alpha\in\simp\text{ and }\langle x,\tilde{\alpha}\rangle\leq 1\}\]
and write any alcove as $\waf\ac^\circ$ for a unique $\waf\in \wa$, where $\ac^\circ$ is the interior of $\ac$.  We call $\waf$ \defn{dominant} if $\waf \ac^\circ \subseteq C.$

\subsection{Affine Root Systems}
\label{sec:affine_roots}

We may also understand $\wa$ in terms of its action on the set of \defn{affine roots} $\ar$ of $\Phi$. To do this, let $\delta$ be a formal variable and define $\widetilde{V}:=V\oplus\R\delta$.
Define the set of affine roots as
\[\ar:=\{\alpha+k\delta:\alpha\in\Phi\text{ and }k\in\Z\}.\]
If $\waf\in\wa$, write it as $\waf=wt_{\mu}$ for unique $w\in W$ and $\mu\in\Q$ and define
\[\waf(\alpha+k\delta)=w(\alpha)+(k-\langle\mu,\alpha\rangle)\delta.\]
This defines an action of $\wa$ on $\ar$. It imitates the action of $\wa$ on the half-spaces of $V$ defined by the hyperplanes of the affine Coxeter arrangement.
To see this, define the half-space
\[\HH_{\alpha}^k:=\{x\in V:\langle x,\alpha\rangle>-k\}.\]
Then for $\waf\in\wa$ we have $\waf(\HH_{\alpha}^k)=\HH_{\beta}^l$ if and only if $\waf(\alpha+k\delta)=\beta+l\delta$.
Define the set of \defn{positive affine roots} as
\[\ar^+:=\{\alpha+k\delta:\alpha\in\Phi^+\text{ and }k\geq0\}\cup\{\alpha+k\delta:\alpha\in-\Phi^+\text{ and }k>0\},\]
the set of affine roots corresponding to half-spaces that contain $\ac^\circ$. So $\ar$ is the disjoint union of $\ar^+$ and $-\ar^+$.

Define the set of \defn{simple affine roots} as
\[\widetilde{\simp}:=\simp\cup\{-\tilde{\alpha}+\delta\},\]
the set of affine roots corresponding to half-spaces that contain $\ac^\circ$ and share one of its defining inequalities. We will also write $\alpha_0:=-\tilde{\alpha}+\delta$.

For $\waf\in\wa$, we say that $\alpha+k\delta\in\ar^+$ is an \defn{inversion} of $\waf$ if $\waf^{-1}(\alpha+k\delta)\in-\ar^+$, and we write
\begin{align*}
\inv(\waf)&:=\{\alpha+k\delta\in\ar^+:\waf^{-1}(\alpha+k\delta)\in-\ar^+\}\\
&=\ar^+\cap\waf(-\ar^+)
\end{align*}
as the set of inversions of $\waf$. 
\begin{theorem}\label{inv}
 The positive affine root $\alpha+k\delta\in\ar^+$ is an inversion of $\waf$ if and only if the hyperplane $H_{\alpha}^{-k}$ separates $\waf\ac^\circ$ from $\ac^\circ$.
 \begin{proof}
  If $\alpha+k\delta\in\ar^+$ is an inversion of $\waf$, then $\ac^\circ\subseteq\HH_{\alpha}^k$ and $\ac^\circ\nsubseteq \waf^{-1}(\HH_{\alpha}^k)$.
  Thus $\waf\ac^\circ\nsubseteq\HH_{\alpha}^k$ and therefore $H_{\alpha}^{-k}$ separates $\waf\ac^\circ$ from $\ac^\circ$.
  
  Conversely, if $\alpha+k\delta\in\ar^+$ and $H_{\alpha}^{-k}$ separates $\waf\ac^\circ$ from $\ac^\circ$, then $\ac^\circ\subseteq\HH_{\alpha}^k$ and $\waf\ac^\circ\nsubseteq\HH_{\alpha}^k$.
  Therefore $\ac^\circ\nsubseteq \waf^{-1}(\HH_{\alpha}^k)$ and thus $\waf^{-1}(\alpha+k\delta)\in-\ar^+$. So $\alpha+k\delta$ is an inversion of $\waf$.
 \end{proof}

\end{theorem}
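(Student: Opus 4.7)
The plan is to translate the algebraic condition ``$\waf^{-1}(\alpha+k\delta)\in-\ar^+$'' into a geometric condition on half-spaces using the correspondence established just before the theorem: for $\waf\in\wa$, $\waf(\HH_\alpha^k)=\HH_\beta^l$ if and only if $\waf(\alpha+k\delta)=\beta+l\delta$. Once this dictionary is in place, the theorem becomes a matter of careful bookkeeping.

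First I would unpack the definitions. By the definition of $\ar^+$, the affine root $\alpha+k\delta$ is positive precisely when the open half-space $\HH_\alpha^k=\{x\in V:\langle x,\alpha\rangle>-k\}$ contains $\ac^\circ$. Dually, $\waf^{-1}(\alpha+k\delta)\in-\ar^+$ means that $\waf^{-1}(\HH_\alpha^k)$ is the half-space opposite one containing $\ac^\circ$, i.e.\ $\ac^\circ\not\subseteq\waf^{-1}(\HH_\alpha^k)$. Applying $\waf$ to both sides of this non-inclusion (and using that $\waf$ is a bijection of $V$) converts it to the condition $\waf\ac^\circ\not\subseteq\HH_\alpha^k$.

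Now I would combine the two ingredients. Being an inversion is the conjunction of $\alpha+k\delta\in\ar^+$ and $\waf^{-1}(\alpha+k\delta)\in-\ar^+$, which by the previous paragraph is equivalent to $\ac^\circ\subseteq\HH_\alpha^k$ together with $\waf\ac^\circ\not\subseteq\HH_\alpha^k$. Since $\waf\ac^\circ$ is an (open) alcove and therefore connected and disjoint from every affine hyperplane, the second condition forces $\waf\ac^\circ$ to lie entirely in the opposite open half-space. Thus $\ac^\circ$ and $\waf\ac^\circ$ lie on opposite sides of the boundary hyperplane $H_\alpha^{-k}=\{x:\langle x,\alpha\rangle=-k\}$, which is exactly the statement that $H_\alpha^{-k}$ separates them. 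Every step is an equivalence, so the converse direction requires no extra work.

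The only real subtlety — and the one place I would go slowly — is the sign convention: the hyperplane bounding $\HH_\alpha^k$ is $H_\alpha^{-k}$ rather than $H_\alpha^k$, because of the ``$-k$'' inside the definition of $\HH_\alpha^k$. As long as one is consistent here, and uses connectedness of alcoves to upgrade ``not contained in'' to ``lies on the opposite side,'' the proof is essentially a one-line unwinding of the $\wa$-equivariance of the correspondence between affine roots and half-spaces.
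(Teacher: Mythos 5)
Your proposal is correct and follows essentially the same route as the paper's proof: unwinding the positivity of $\alpha+k\delta$ as $\ac^\circ\subseteq\HH_{\alpha}^k$, translating $\waf^{-1}(\alpha+k\delta)\in-\ar^+$ into $\waf\ac^\circ\nsubseteq\HH_{\alpha}^k$ via the equivariant correspondence between affine roots and half-spaces, and concluding separation by $H_{\alpha}^{-k}$. Your explicit appeal to connectedness of the open alcove to upgrade ``not contained in'' to ``lies on the opposite side'' is a welcome clarification the paper leaves implicit, but it is the same argument.
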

Define the \defn{height} of an affine root $\alpha+k\delta$ as $\h(\alpha+k\delta)=\h(\alpha)+kh$.
So $\h(\alpha+k\delta)>0$ if and only if $\alpha+k\delta\in\ar^+$ and $\h(\alpha+k\delta)=1$ if and only if $\alpha+k\delta\in\widetilde{\simp}$.\\
\\
For an integer $l$ with $-h<l<h$, let $\Phi_l$ be the set of roots in $\Phi$ of height $l$. 
Similarly, for any positive integer $b$, let $\ar_b$ be the set of affine roots in $\ar$ of height $b$.
If we write $b=th+r$ with $t,r\in\Z$ and $0\leq r<h$, then
\[\ar_b=\{\alpha+t\delta:\alpha\in \Phi_r\}\cup\{\alpha+(t+1)\delta:\alpha\in \Phi_{r-h}\}.\]


\subsection{Symmetry of the Affine Diagram}
\label{sec:symmetry_of_digram}

Define $\wex:=W\ltimes \cwl$ to be the \defn{extended affine Weyl group} of $\Phi$.
Let 
\[\cycl:=\{\waf\in\wex:\waf\ac=\ac\}.\]
Then $\cycl\cong\wex/\wa\cong\cwl/\Q$ is an abelian group of order $f$, the \defn{index of connection} of $\Phi$. It can be thought of as a group of symmetries of the fundamental alcove $\ac$, or---dually---as a group of symmetries of the affine Dynkin diagram.  The structure of $\cycl$ in simply-laced types is given in~\Cref{fig:cycl}.

\begin{figure}
\[\begin{array}{c|cccccc}
\widetilde{W} & \widetilde{A}_n & \widetilde{D}_{2n} & \widetilde{D}_{2n+1} & \widetilde{E}_6 & \widetilde{E}_7 & \widetilde{E}_8 \\ \hline
\cycl & \mathbb{Z}_{n+1} & \mathbb{Z}_4 & \mathbb{Z}_2 \times \mathbb{Z}_2 & \mathbb{Z}_3 & \mathbb{Z}_2 & \mathbb{Z}_1
\end{array}.\]
\caption{The structures of the abelian groups $\cycl\cong \cwl/\Q$ in affine simply-laced types~\cite{iwahori1965some}, where $\mathbb{Z}_m:=\mathbb{Z}/m\mathbb{Z}$.  Compare with the symmetries of~\Cref{fig:dynkin}.}
\label{fig:cycl}
\end{figure}

\begin{proposition}[{B.~Kostant~\cite[Lemma 3.4.1]{kostant1976macdonald}}]
If $M$ is the Cartan matrix and $c$ is a Coxeter element of $W$, then \[\left|{\sf det}(M)\right|={\sf det}(1-c)=|\cycl|=f.\]
\end{proposition}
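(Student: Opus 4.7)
The plan is to establish the three equalities in turn. The identity $|\cycl|=f$ is immediate from Section~2.4: $\cycl\cong\wex/\wa\cong\cwl/\Q$, so $|\cycl|=[\cwl:\Q]=f$. For $|\det M|=f$, the Cartan matrix $M_{ij}=\langle\check{\alpha}_i,\alpha_j\rangle$ is the integer change-of-basis matrix expressing the $\Q$-basis $\{\check{\alpha}_i\}$ in the $\cwl$-basis $\{\cw_j\}$: the duality $\langle\cw_k,\alpha_j\rangle=\delta_{kj}$ forces $\check{\alpha}_i=\sum_j M_{ij}\cw_j$, so $|\det M|=[\cwl:\Q]=f$.

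The substantive identity is $\det(1-c)=|\det M|$. Fix a reduced expression $c=s_{i_1}s_{i_2}\cdots s_{i_n}$ where $(i_1,\ldots,i_n)$ is a permutation of $(1,\ldots,n)$. The telescoping identity
\[1-c=\sum_{k=1}^n s_{i_1}\cdots s_{i_{k-1}}(1-s_{i_k})\]
together with the rank-one formula $(1-s_{i_k})(v)=\langle v,\check{\alpha}_{i_k}\rangle\alpha_{i_k}$ (obtained from the reflection formula of Section~2.2) gives
\[(1-c)(v)=\sum_{k=1}^n\langle v,\check{\alpha}_{i_k}\rangle\,\gamma_k,\qquad\gamma_k:=s_{i_1}\cdots s_{i_{k-1}}(\alpha_{i_k}).\]
The key claim is that $\{\gamma_1,\ldots,\gamma_n\}$ forms a $\mathbb{Z}$-basis of the root lattice $Q$: since $s_j(\alpha)\equiv\alpha\pmod{\mathbb{Z}\alpha_j}$ for every root $\alpha$, one has $\gamma_k\equiv\alpha_{i_k}\pmod{\mathbb{Z}\alpha_{i_1}+\cdots+\mathbb{Z}\alpha_{i_{k-1}}}$, so the transition matrix from $(\alpha_{i_1},\ldots,\alpha_{i_n})$ to $(\gamma_1,\ldots,\gamma_n)$ is upper-unitriangular and hence unimodular.

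Writing $1-c:V\to V$ with $\{\alpha_j\}$ on the domain and $\{\gamma_k\}$ on the codomain, the resulting matrix has $(k,j)$-entry $\langle\alpha_j,\check{\alpha}_{i_k}\rangle$, i.e.\ a row-permutation of the Cartan matrix. Composing with the unimodular change-of-basis $\{\gamma_k\}\to\{\alpha_j\}$ yields $|\det(1-c)|=|\det M|=f$. To drop the absolute value, note the eigenvalues of $c$ on the reflection representation are $\{e^{2\pi ie_j/h}\}_{j=1}^n$, which pair into complex conjugates via $e_j+e_{n+1-j}=h$ (with at worst a single real eigenvalue $-1$ when $h/2$ is an exponent), making $\det(1-c)=\prod_j(1-e^{2\pi ie_j/h})$ a product of nonnegative reals and thus positive.

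The main obstacle is the matrix bookkeeping that identifies $1-c$ with $M$ in the chosen bases; the telescoping formula and the unimodular-basis claim are clean, but one must track that the implicit row/column permutations contribute only signs, which are absorbed at the end by the positivity argument.
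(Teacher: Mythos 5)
Your proof is correct: the telescoping identity $1-c=\sum_k s_{i_1}\cdots s_{i_{k-1}}(1-s_{i_k})$, the unitriangular (hence unimodular) change of basis to $\gamma_k=s_{i_1}\cdots s_{i_{k-1}}(\alpha_{i_k})$, the identification $[\check{\Lambda}:\check{Q}]=|\det M|$ via $\check{\alpha}_i=\sum_j M_{ij}\check{\omega}_j$, and the positivity of $\det(1-c)$ from the eigenvalues $e^{2\pi i e_j/h}$ all check out. The paper offers no proof of this proposition—it simply cites Kostant—and your argument is essentially the classical one from that source, so there is nothing further to compare.
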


We next relate $\cwl$ to a subset of the finite Weyl group $W$.
To do this, we first need a lemma due to Kostant. 
\begin{lemma}[{\cite[Lemma 3.6]{lam2012alcoved}}]
\label{uniquerhoh}
 Every alcove $\waf\ac$ contains exactly one point in $\frac{1}{h}\cwl$. For the fundamental alcove $\ac$, this point is $\rhoh$.
\end{lemma}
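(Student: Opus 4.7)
The plan is to prove the claim first for the fundamental alcove $\ac$ and then propagate it to every other alcove through the simply transitive action of $\wa$ on alcoves. The base case splits into verifying that $\rhoh$ lies in the interior of $\ac$ and then showing it is the only point of $\frac{1}{h}\cwl$ in that interior; the transfer step then follows from the fact that $\wa$ preserves $\frac{1}{h}\cwl$.

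For the first piece, using the identity $\langle \check\rho,\alpha\rangle = \h(\alpha)$ recorded in \Cref{sec:root_systems}, one has $\langle \rhoh,\alpha_i\rangle = 1/h > 0$ for each simple root $\alpha_i$ and $\langle \rhoh,\tilde\alpha\rangle = (h-1)/h < 1$, so all of the defining inequalities of $\ac$ hold strictly at $\rhoh$.

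Next I would prove uniqueness by an integer-coefficient analysis. For any $x \in \ac^\circ \cap \frac{1}{h}\cwl$, the vector $hx$ lies in $\cwl$ and expands uniquely as $hx = \sum_{i=1}^n n_i \cw_i$ with $n_i \in \mathbb{Z}$. Writing the highest root as $\tilde\alpha = \sum_i c_i \alpha_i$, the strict defining inequalities of $\ac^\circ$ become $n_i > 0$ and $\sum_i c_i n_i < h$. Since the $n_i$ are integers and $h = 1+\sum_i c_i$, these inequalities give $n_i \geq 1$ and $\sum_i c_i (n_i - 1) \leq 0$, a sum of non-negative terms, forcing $n_i = 1$ for all $i$, i.e.\ $x = \rhoh$. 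The bootstrap to an arbitrary alcove is the observation that $\wa = W \ltimes \Q$ preserves the lattice $\frac{1}{h}\cwl$, since $W$ stabilizes $\cwl$ and $\Q \subseteq \cwl$; hence multiplication by any $\waf \in \wa$ bijects $\ac \cap \frac{1}{h}\cwl$ with $\waf\ac \cap \frac{1}{h}\cwl$ (and interior with interior), so the unique lattice point in the interior of $\waf\ac$ is $\waf\rhoh$.

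The main obstacle is the integer inequality analysis at the heart of the uniqueness step: one has to squeeze the coefficients $n_i$ between the strict positivity required at the $n$ walls through the origin and the height bound at the affine wall, and the essential input is the identity $h = 1 + \sum_i c_i$ together with the positivity of the marks $c_i$, without which the constraints would admit many integer solutions.
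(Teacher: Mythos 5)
Your proposal is correct and follows essentially the same route as the paper: check that $\rhoh$ satisfies the defining inequalities of $\ac$ strictly, prove uniqueness by noting that the coweight-basis coefficients of a point of $\ac^\circ\cap\frac{1}{h}\cwl$ are positive integers $n_i$ with $\sum_i c_i n_i < h = 1+\sum_i c_i$, forcing $n_i=1$, and then transfer to all alcoves via the $\wa$-action on $\frac{1}{h}\cwl$. The only cosmetic difference is that the paper verifies $\langle\rhoh,\alpha\rangle\in(0,1)$ for all positive roots rather than just the walls of $\ac$, and leaves the integer squeeze implicit, while you spell it out.
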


\begin{proof}
  We have $\check{\rho}=\sum_{i=1}^n\cw_i\in\cwl$, so $\rhoh\in\frac{1}{h}\cwl$. 
  We also have that $\langle\rhoh,\alpha\rangle=\h(\alpha)/h\in(0,1)$ for all $\alpha\in\Phi^+$.
  Thus $\rhoh$ lies in $\ac^\circ$---in fact, it is the only element in $\ac^\circ\cap\frac{1}{h}\cwl$.
  
  Indeed, suppose that $\nu\in\ac^\circ\cap\frac{1}{h}\cwl$. Then for all $\alpha_i\in\simp$ we have $\langle\nu,\alpha_i\rangle= a_i/h$ for some $a_i\in\Z_{+}$.
  But we also have $\langle\nu,\tilde{\alpha}\rangle=(\sum_{i=1}^na_ic_i)/h<1$, so $a_i=1$ for all $i\in[n]$ and thus $\nu=\rhoh$.
  
  Since $\wa$ acts on $\frac{1}{h}\cwl$, there is exactly one element of $\frac{1}{h}\cwl$ in any alcove $\waf\ac^\circ$.
 \end{proof}

\begin{theorem}
 The point $\rhoh$ is a fixed point of the action of $\cycl$. Any element $\waf$ of $\cycl$ can be written as $\waf=t_{\rhoh}wt_{-\rhoh}$ for a unique $w\in W$.
\end{theorem}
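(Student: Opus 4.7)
The plan is to exploit \Cref{uniquerhoh}: inside the fundamental alcove $\ac$ there is exactly one point of $\tfrac{1}{h}\cwl$, namely $\rhoh$. The group $\wex = W \ltimes \cwl$ acts on $\tfrac{1}{h}\cwl$ (translations by $\cwl$ preserve it since $\cwl \subseteq \tfrac{1}{h}\cwl$, and $W$ preserves $\cwl$ linearly, hence also $\tfrac{1}{h}\cwl$), and by definition each $\waf\in\cycl\subseteq\wex$ preserves $\ac$ setwise. So $\waf$ permutes $\ac \cap \tfrac{1}{h}\cwl$; but this set is the singleton $\{\rhoh\}$, and hence $\waf$ fixes $\rhoh$. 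This proves the first assertion.

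For the second assertion, I would work in the group $\operatorname{Aff}(V)$ of affine isometries of $V$, inside which $\wex$ sits. For any $\waf\in\cycl$, conjugate by the translation $t_{\rhoh}$ (which lives in $\operatorname{Aff}(V)$ even though $\rhoh$ need not lie in $\cwl$) and set
\[
 w := t_{-\rhoh}\,\waf\,t_{\rhoh}.
\]
By the first part, $\waf(\rhoh)=\rhoh$, so a direct evaluation shows $w(0)=0$. Thus $w\in\operatorname{Aff}(V)$ fixes the origin and so equals its own linear part. But the linear part of $\waf$ is by definition the projection of $\waf\in\wex=W\ltimes\cwl$ onto $W$, and conjugation by the translation $t_{\rhoh}$ does not change the linear part. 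Hence $w\in W$, and rearranging gives $\waf=t_{\rhoh}\,w\,t_{-\rhoh}$.

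For uniqueness, if $\waf=t_{\rhoh}w_1t_{-\rhoh}=t_{\rhoh}w_2t_{-\rhoh}$, then $w_1=w_2$ by cancelling the translations; equivalently, $w$ is forced to be the linear part of $\waf$, which is determined by $\waf$ alone via the semidirect product decomposition $\wex=W\ltimes\cwl$.

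I don't anticipate a real obstacle here: the only subtlety is verifying that conjugation by the non-lattice translation $t_{\rhoh}$ is legal, which it is once one works inside $\operatorname{Aff}(V)$, together with the observation that linear parts are invariant under conjugation by translations. \Cref{uniquerhoh} does all the genuine work.
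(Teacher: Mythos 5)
Your proof is correct and follows essentially the same route as the paper: the fixed-point claim is obtained exactly as in the paper from \Cref{uniquerhoh} together with the action of $\wex$ on $\frac{1}{h}\cwl$, and your conjugation $w=t_{-\rhoh}\waf t_{\rhoh}$ is just a repackaging of the paper's computation $\waf=t_{\mu}w=t_{\rhoh}wt_{-\rhoh}$ with $\mu=\rhoh-w(\rhoh)$. No gaps; the observation that conjugating by a (possibly non-lattice) translation inside $\operatorname{Aff}(V)$ preserves the linear part is a valid way to see that $w\in W$ and is unique.
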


 \begin{proof}
  The extended affine Weyl group acts on $\frac{1}{h}\cwl$. Thus if $\waf\in\cycl$, then $\waf(\rhoh)\in \left(\frac{1}{h}\cwl\right) \cap\ac$, so by Lemma \ref{uniquerhoh} we have $\waf(\rhoh)=\rhoh$.
  Writing $\waf=t_{\mu}w$ for $\mu\in\cwl$ and $w\in W$, we have 
  \[w\left(\rhoh\right)+\mu=\rhoh.\]
  Therefore,
  \[
   \waf=t_{\mu}w=t_{\rhoh-w(\rhoh)}w=t_{\rhoh}t_{-w(\rhoh)}w=t_{\rhoh}wt_{-\rhoh},
  \]
  as required.
 \end{proof}

We now show that in each $\cycl$-orbit of $\check{\Lambda}$, there is exactly one point of $\check{Q}$.  Starting from $\cycl$, let \[b\cycl=\{t_{b\mu}w:t_{\mu}w\in\cycl\}\]  be the analogous group of automorphisms of $b\ac$.

\begin{theorem}
 Let $b$ be a positive integer relatively prime to the index of connection $f$. Then the group $b\cycl$ acts freely on $\cwl$ and every $b\cycl$-orbit contains a unique point in $\Q$.
\label{thm:cycl_action_and_Q}
\end{theorem}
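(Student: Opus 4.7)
The plan is to pass to the quotient $\cwl/\Q$, where the action of $b\cycl$ becomes a purely translational action that is completely controlled by the arithmetic of the finite abelian group $\cycl$.

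First I would record a key structural fact: the finite Weyl group $W$ acts trivially on $\cwl/\Q$. Indeed, for any $\alpha\in\Phi$ and $x\in\cwl$, the integrality condition defining $\cwl$ gives $\langle x,\alpha\rangle\in\Z$, so $s_\alpha(x)-x=-\langle x,\alpha\rangle\check{\alpha}\in\Q$. Consequently, for an arbitrary element $t_{b\mu}w\in b\cycl$ acting on $x\in\cwl$, the reduction modulo $\Q$ satisfies
\[
[(t_{b\mu}w)(x)]=[w(x)+b\mu]=[x]+b[\mu]\quad\text{in }\cwl/\Q.
\]
As $t_\mu w$ runs over $\cycl$, the class $[\mu]$ runs over all of $\cwl/\Q\cong\cycl$ (this is the isomorphism $\cycl\cong\wex/\wa$).

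Next I would invoke the hypothesis $\gcd(b,f)=1$ together with $|\cycl|=f$: multiplication by $b$ is an automorphism of the abelian group $\cycl$, so as $[\mu]$ ranges over $\cycl$, so does $b[\mu]$. Combined with the previous step, this shows that the composition
\[
b\cycl\cdot x\;\hookrightarrow\;\cwl\;\twoheadrightarrow\;\cwl/\Q
\]
is surjective onto the $f$-element set $\cwl/\Q$.

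Finally I would conclude by a counting argument. The $b\cycl$-orbit of $x$ has at most $|b\cycl|=f$ elements, yet it surjects onto a set of size $f$, hence it has exactly $f$ elements and maps bijectively onto $\cwl/\Q$. Bijectivity on $f$ elements says both (i) the stabilizer of $x$ is trivial, so $b\cycl$ acts freely, and (ii) each coset of $\Q$ in $\cwl$—in particular the trivial coset $\Q$ itself—contains exactly one point of the orbit. There is essentially no obstacle beyond the triviality observation for the $W$-action on $\cwl/\Q$; once that is in hand, the result is a two-line counting argument using $\gcd(b,f)=1$.
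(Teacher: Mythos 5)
Your argument is correct, and it takes a somewhat different route from the paper's. The paper's proof works with the vertex set $\Gamma=\{0\}\cup\{\cw_i:\langle\cw_i,\tilde{\alpha}\rangle=1\}$ of the fundamental alcove: it cites Iwahori--Matsumoto for the facts that $\Gamma$ is a set of representatives of $\cwl/\Q$ and that $\cycl$ acts simply transitively on $\Gamma$, then uses invertibility of multiplication by $b$ on $\cwl/\Q$ (from $\gcd(b,f)=1$) to conclude that $b\Gamma$ is again a set of representatives on which $b\cycl$ acts simply transitively, and finally transports this to an arbitrary orbit. You instead work directly in the quotient $\cwl/\Q$: the observation $s_\alpha(x)-x=-\langle x,\alpha\rangle\check{\alpha}\in\Q$ shows $W$ acts trivially there, so $t_{b\mu}w$ acts on classes as translation by $b[\mu]$, and then the isomorphism $\cycl\cong\cwl/\Q$ together with coprimality and a counting argument finishes the proof. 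What your route buys is self-containedness and transparency: the only external input is the isomorphism $\cycl\cong\wex/\wa\cong\cwl/\Q$ already stated in the paper, and you make explicit the step (triviality of the $W$-action modulo $\Q$) that the paper's final sentence glosses over when passing from the orbit $b\Gamma$ to the orbit of a general $\mu\in\cwl$. What the paper's route buys is the concrete geometric picture of the orbit as a dilated vertex set. The one place you should add a line is your parenthetical claim that, as $t_\mu w$ runs over $\cycl$, the class $[\mu]$ runs over all of $\cwl/\Q$: this needs the remark that the composite isomorphism $\cycl\cong\wex/\wa\cong\cwl/\Q$ is realized by $t_\mu w\mapsto \mu+\Q$ (equivalently, that $\wex=\cycl\,\wa$ and $t_\mu w\,\wa=t_\mu\,\wa$, with $t_\mu\in\wa$ iff $\mu\in\Q$); this is standard and easy, but it is the precise point where the Iwahori--Matsumoto input enters your version of the argument.
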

 
 \begin{proof}
  The set of vertices of the fundamental alcove $\Gamma:=\{0\}\cup\{\cw_i:\langle\cw_i,\tilde{\alpha}\rangle=1\}$ is a set of representatives of $\cwl/\Q$.
  Furthermore, we have that $\cycl=\{t_{\mu}w_{\mu}:\mu\in\Gamma\}$, where $w_{\mu}\in W$ for all $\mu\in\Gamma$,
  and $\cycl$ acts simply transitively on $\Gamma$~\cite[Proposition 1.18]{iwahori1965some}.

  Since $b$ is coprime to $f=[\cwl:\Q]$, the map 
  \begin{align*}
   \cwl/\Q&\rightarrow \cwl/\Q\\
   x+\Q&\mapsto bx+\Q
  \end{align*}
  is invertible.  The set $b\Gamma$ is therefore also a set of representatives of $\cwl/\Q$. The group $b\cycl$ acts simply transitively on it.
  We conclude that for any $\mu\in\cwl$ the orbit $(b\cycl)\mu$ is a set of representatives of $\cwl/\Q$. In particular, it is free and contains exactly one point in $\Q$.
 \end{proof}

One can check \emph{case-by-case} that any prime that divides $f$ also divides the Coxeter number $h$.
Thus, the conclusion of the theorem also holds when $b$ is relatively prime to $h$.

\subsection{Strange Identity}
\label{sec:strange_identities}

To conclude this section, we recall the ``strange formula'' of H.~Freudenthal and H.~de Vries. 

\begin{theorem}
Let $\Phi$ be an irreducible crystallographic root system of rank $n$. Let $h$ be the Coxeter number of of $\Phi$, $g$ the dual Coxeter number and recall that $\rho=\frac{1}{2}\sum_{\alpha\in\Phi^+}\alpha$.  Then



 \[\frac{\|\rho\|^2}{2g}=\frac{n(h+1)}{24}.\]
\label{thm:identities}
\end{theorem}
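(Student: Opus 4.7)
The plan is to reduce the strange formula to a polynomial identity in the exponents $e_1,\ldots,e_n$ of $\Phi$.

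I would start from the Schur-type identity
\[ \sum_{\alpha\in\Phi}\langle\alpha,v\rangle\,\alpha \;=\; 2g\,v \qquad (v\in V), \]
which holds because its left-hand side defines a $W$-equivariant symmetric endomorphism of the irreducible $W$-module $V$, forcing it to be a scalar multiple of the identity; the scalar $2g$ is pinned down by the trace computation $\sum_\alpha\|\alpha\|^2 = 2gn$, one standard characterization of the dual Coxeter number under the normalization $\|\tilde{\alpha}\|^2=2$. Pairing this identity (applied at $v=\rho$) against $\rho$ itself gives
\[ \|\rho\|^2 \;=\; \frac{1}{g}\sum_{\alpha \in \Phi^+}\langle\alpha,\rho\rangle^2. \]

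In the simply-laced case $\rho = \check{\rho}$, so $\langle\alpha,\rho\rangle = \mathrm{ht}(\alpha)$, and by the classical Kostant--Shapiro--Steinberg observation the number of positive roots of height $k$ equals $|\{i : e_i \geq k\}|$. Substituting yields
\[ \|\rho\|^2 \;=\; \frac{1}{g}\sum_{i=1}^n\sum_{k=1}^{e_i}k^2 \;=\; \frac{1}{6g}\sum_{i=1}^n e_i(e_i+1)(2e_i+1). \]
The claim $\|\rho\|^2 = gn(h+1)/12$ then reduces to the second-moment identity $\sum_i e_i^2 = nh(2h-1)/6$, using $\sum_i e_i = |\Phi^+| = nh/2$ and the palindromic symmetry $e_i + e_{n+1-i} = h$ to eliminate the cubic term. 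This second-moment identity is in turn a standard consequence of the Poincar\'e factorization $\sum_{w\in W}t^{\ell(w)} = \prod_i(1-t^{e_i+1})/(1-t)$ of the finite reflection group $W$, obtained by expanding both sides around $t=1$ and comparing coefficients.

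The main obstacle lies in the non-simply-laced case: there $\rho\neq\check{\rho}$, so $\langle\alpha,\rho\rangle$ ceases to be the height of $\alpha$. The plan is to split the sum over $\Phi^+$ by root length and run the same Schur-type identity on both the long- and short-root contributions, exploiting that $\Phi$ and $\Phi^\vee$ share the same exponents while $h$ and $g$ swap roles under the duality. Should this bookkeeping become unwieldy, a clean fallback is to verify the identity directly against the Dynkin tables in Bourbaki, since both sides of the strange formula depend only on the three integers $n$, $h$, and $g$, reducing the statement to a finite check across the classification of irreducible crystallographic root systems.
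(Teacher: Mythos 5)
You are supplying a proof where the paper gives none (it quotes this as the classical ``strange formula'' of Freudenthal--de Vries without proof), so the only question is whether your argument stands on its own; it has a genuine gap at exactly the point where the content lies. The opening steps are fine: $\sum_{\alpha\in\Phi}\langle\alpha,v\rangle\,\alpha=2g\,v$ (equivalently $\sum_{\alpha\in\Phi}\|\alpha\|^2=2gn$) is correct under the normalization $\|\tilde{\alpha}\|^2=2$, pairing with $\rho$ gives $\|\rho\|^2=\frac{1}{g}\sum_{\alpha\in\Phi^+}\langle\alpha,\rho\rangle^2$, and in the simply-laced case the height--exponent duality together with $\sum_i e_i=nh/2$ and $e_i+e_{n+1-i}=h$ does reduce the claim correctly to $\sum_i e_i^2=\frac{nh(2h-1)}{6}$. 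But that second-moment identity is \emph{not} a consequence of the Poincar\'e factorization $\sum_{w\in W}t^{\ell(w)}=\prod_i\frac{1-t^{e_i+1}}{1-t}$: expanding at $t=1$ only expresses moments of the length statistic in terms of the $e_i$, and gives no independent evaluation of $\sum_i e_i^2$. A decisive check: the factorization, $\sum_i e_i=nh/2$, and the palindromic symmetry hold in \emph{every} irreducible type, yet $\sum_i e_i^2=\frac{nh(2h-1)}{6}$ fails already for $B_2$ (exponents $1,3$, $h=4$: $10\neq 28/3$) and for $G_2$; so no argument using only those inputs can prove it, and the simply-laced hypothesis never re-enters your argument after the step $\rho=\check{\rho}$. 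Worse, given what you have already established, the identity is \emph{equivalent} to the strange formula: $\sum_{\alpha\in\Phi^+}\mathrm{ht}(\alpha)=\sum_i\binom{e_i+1}{2}$ gives $\sum_i e_i^2=4\langle\rho,\check{\rho}\rangle-\frac{nh}{2}$, which in the simply-laced case is $4\|\rho\|^2-\frac{nh}{2}$. So as written the proof is circular at its crux.

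To repair it you need an independent proof of the second-moment identity in the simply-laced case (for instance via Macdonald-identity/Weyl-character arguments, or a direct check over $A_n$, $D_n$, $E_6$, $E_7$, $E_8$), or a genuinely different route. The non-simply-laced part of your proposal is only a plan; the fallback of verifying the formula against the classification is legitimate, but note that $\|\rho\|^2$ is not a function of $n$, $h$, $g$ alone, so for the infinite families $B_n$ and $C_n$ this is a computation valid for all $n$ rather than literally a finite check --- and if you are willing to do that, the same check handles the simply-laced families and renders the first part of the argument unnecessary.
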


The appearance of the number $24$ in the denominator of the ``strange formula'' will explain its appearance in~\Cref{thm:max_size,thm:expected_size}


\section{Cores}
\label{sec:affine_sym_and_cores}

In this section, we recall the bijection between $a$-cores and the minimal-length coset representatives for the parabolic quotient $\widetilde{\mathfrak{S}}_a / \mathfrak{S}_a$ (\Cref{thm:weak_order_and_cores}).   Using the isomorphism between $\wa/W$ and $\check{Q}$ for $W=\mathfrak{S}_a$, we interpret and generalize cores as points in the coroot lattice $\check{Q}$.

\subsection{The Affine Symmetric Group and Cores}

The \defn{affine symmetric group} has presentation \[\widetilde{\mathfrak{S}}_a := \left \langle s_0,s_1,\ldots,s_{a-1} : (s_i s_{i+1})^3 = e, (s_i s_j)^2 = e \text{ if }|i-j|>1 \right\rangle,\]

where indices will always be taken modulo $a$.  The elements $\waf\widetilde{\mathfrak{S}}_a$ such that $\waf^{-1}\ac^\circ\subseteq C$ are the minimal-length right coset representatives for the parabolic quotient $\widetilde{\mathfrak{S}}_a / \mathfrak{S}_a$.  By abuse of notation, we will associate elements of $\widetilde{\mathfrak{S}}_a / \mathfrak{S}_a$ with their minimal-length right coset representatives.

There is a bijection between $\widetilde{\mathfrak{S}}_a / \mathfrak{S}_a$ and $a$-cores, given as follows.  Label the $(i,j)$th box of the Ferrers diagram of an $a$-core $\lambda$ by its \defn{content} $(j-i) \mod a$.  We define an action $\widetilde{\mathfrak{S}}_a$ on the set of $a$-cores by defining how the simple reflections $s_i$ act.  Given an $a$-core $\lambda$, we define $s_i \lambda$ (for $0 \leq i \leq a-1$) to be the unique $a$-core that differs from $\lambda$ only by boxes with content $i$.  The partial order on $\core(a)$ is given by letting $\lambda$ cover $\mu$ if and only if $\size(\lambda)>\size(\mu)$ and $\lambda = s_i \mu$ for some $i$.

\begin{theorem}[\cite{lascoux2001ordering}]
    There is a poset isomophism $\bijtocore$ between the weak order on the parabolic quotient $\widetilde{\mathfrak{S}}_a / \mathfrak{S}_a$ and the poset on $\core(a)$ defined above.
\label{thm:weak_order_and_cores}
\end{theorem}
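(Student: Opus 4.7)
The plan is to construct the map $\bijtocore$ by transport of structure along the action of the affine simple reflections. Setting $\bijtocore(e) := \emptyset$, I extend recursively via $\bijtocore(s_i \waf) := s_i \bijtocore(\waf)$ whenever $\waf$ and $s_i\waf$ are both minimal-length coset representatives with $l(s_i\waf) = l(\waf)+1$. Three things must be verified: that the stated action of $\widetilde{\mathfrak{S}}_a$ on $\core(a)$ is a genuine group action; that the recursion descends to the parabolic quotient; and that the resulting bijection intertwines the two poset structures.

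First I would check that $s_i$ is well-defined on $a$-cores. Passing to the $a$-abacus model (an $a$-core is encoded by an $a$-tuple of bead positions on $a$ runners, up to a global shift), the addable and removable boxes of content $i$ correspond to particular moves on two adjacent runners, and all boxes of content $i$ are simultaneously addable or simultaneously removable. Thus $s_i\lambda$ is unambiguously defined, and the sign of $\size(s_i\lambda)-\size(\lambda)$ records the addable/removable dichotomy. Second, I would verify the Coxeter relations: $(s_is_j)^2=e$ for $|i-j|\neq 0,1 \pmod a$ is immediate since boxes of non-adjacent contents interact independently, while $(s_is_{i+1})^3=e$ is a local check near each corner of the Ferrers diagram involving three consecutive content classes, most transparent in the abacus.

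With the action established, $\bijtocore$ is well-defined on $\widetilde{\mathfrak{S}}_a$, and it is constant on right cosets $\waf\mathfrak{S}_a$ because the empty partition admits no addable or removable boxes of contents $1,\ldots,a-1$, so $s_1,\ldots,s_{a-1}$ all fix $\emptyset$. Surjectivity follows from the fact that every nonempty $a$-core has a removable box of some content, yielding a reduction to $\emptyset$ whose reversal exhibits a preimage; injectivity follows by tracking $l(\waf)$ against $\size(\bijtocore(\waf))$, each step of the recursion increasing the latter by the positive number of boxes added. For the poset statement, the cover $\waf \lessdot s_i\waf$ in the weak order on $\widetilde{\mathfrak{S}}_a/\mathfrak{S}_a$ is precisely the condition $l(s_i\waf)=l(\waf)+1$ with $s_i\waf$ remaining in the quotient, which by construction corresponds to $s_i$ acting on $\bijtocore(\waf)$ by adding (rather than removing) boxes—exactly the declared cover relation on $\core(a)$.

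The main obstacle is the braid relation $(s_is_{i+1})^3=e$ on cores, which demands a careful enumeration of how addable and removable corners of adjacent contents interact as one moves along the southeast boundary of the Ferrers diagram. A more conceptual alternative—avoiding this case analysis at the cost of additional setup—identifies both $\widetilde{\mathfrak{S}}_a/\mathfrak{S}_a$ and $\core(a)$ with the same set of lattice points: the former via $\waf\mapsto\waf(0)\in\check{Q}$ (using $\wa=W\ltimes\check{Q}$ of Section~\ref{sec:hyp_and_weyl}), and the latter via the bijection between $a$-cores and $a$-tuples of integers summing to $0$ coming from the abacus. The theorem then reduces to checking that the weak order and the cover order on $\core(a)$ both recover the same partial order on $\check{Q}$, namely the one where $\mu \lessdot \mu'$ when $\mu'-\mu$ is a positive coroot crossing no separating hyperplane already crossed from $0$ to $\mu$.
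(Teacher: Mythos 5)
Note first that the paper offers no proof of this statement: it is quoted from Lascoux~\cite{lascoux2001ordering}, so your proposal is being measured against the standard argument rather than anything in the text. Your overall strategy---define the content-$i$ action on $\core(a)$, verify it is a $\widetilde{\mathfrak{S}}_a$-action via the abacus, set $\bijtocore(\waf)=\waf\cdot\emptyset$, observe that $s_1,\dots,s_{a-1}$ fix $\emptyset$ so the map descends to the quotient, and then match cover relations---is the standard route and is viable.

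There is, however, a genuine gap at the decisive step. Both your injectivity argument and your poset claim rest on the assertion that a length-increasing step in the quotient adds boxes: if $\waf$ and $s_i\waf$ are minimal-length coset representatives with $l(s_i\waf)=l(\waf)+1$, then $s_i$ acts on the core $\waf\cdot\emptyset$ by adding a positive number of boxes of content $i$, and conversely. You dismiss this as holding ``by construction,'' but nothing in your construction supplies it: a priori $s_i$ could remove boxes or fix the core even though the length goes up, and one must also handle the case $l(s_i\waf)=l(\waf)+1$ with $s_i\waf$ no longer a minimal representative (by the lifting/Deodhar lemma this is exactly the case $s_i\waf\in\waf\mathfrak{S}_a$, which is the case in which $s_i$ fixes the core). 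This length--size compatibility is precisely the nontrivial content of Lascoux's theorem---it is the same fact the present paper later imports from Fishel--Vazirani in the proof of \Cref{prop:size_in_type_a}---and it needs an actual argument, e.g.\ on the abacus by showing that for a minimal representative $\waf$ the condition $l(s_i\waf)>l(\waf)$ forces the relevant runner exchange to push beads forward, which adds boxes. Two smaller points: the correct statement behind well-definedness is that an $a$-core never has both an addable and a removable box of the same content (it may have neither), not that all content-$i$ boxes are simultaneously addable or removable; and your closing ``more conceptual'' alternative is too vague to substitute for this, since the order you propose on $\check{Q}$ is not pinned down and identifying it with both the weak order and the box-adding order is again exactly the compatibility lemma in disguise.
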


Thus, $a$-cores are identified with elements of $\widetilde{\mathfrak{S}}_a / \mathfrak{S}_a$.   Theorem~\ref{thm:weak_order_and_cores} is illustrated in Figure~\ref{fig:weak_order_and_cores} in the case $a=3$.

\begin{figure}[htbp]
\includegraphics[height=3in]{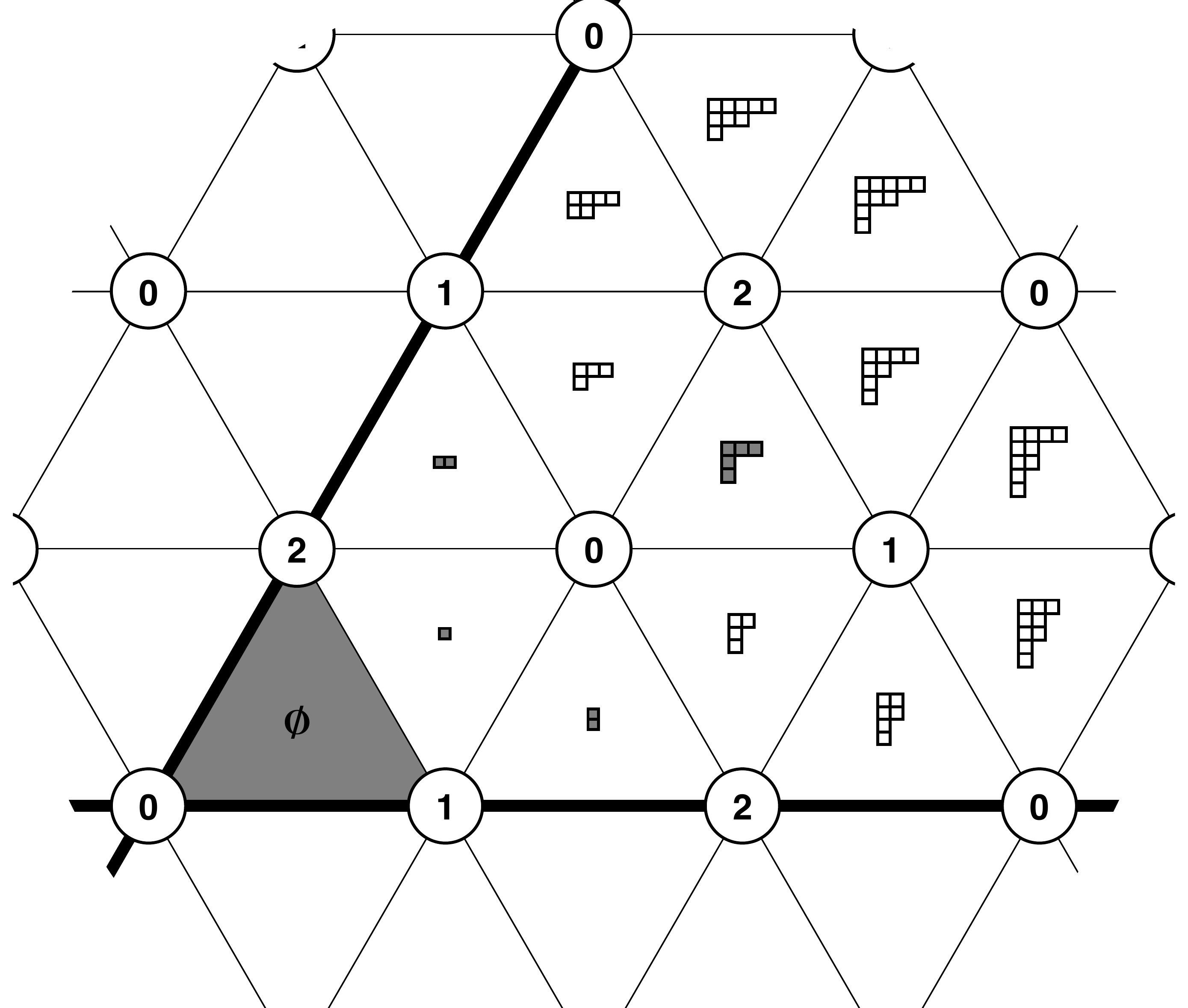}
\caption{The weak order on the minimal-length representatives $\waf\in\widetilde{\mathfrak{S}}_3 / \mathfrak{S}_3$ displayed as the dominant alcoves $\waf^{-1}\ac^\circ$ and the poset of $3$-cores.  The five simultaneous $(3,4)$-cores are the empty core and the four cores shaded gray.}
\label{fig:weak_order_and_cores}
\end{figure}

\subsection{Affine Weyl Groups and Cores}

There are two fundamentally different ways to think of $\wa$.  The first, mentioned in~\Cref{sec:hyp_and_weyl}, is as \[\wa:=W \rtimes \check{Q},\] which we think of as tiling $V$ using bounded copies of $W$ centered at each point of the coroot lattice.  The second way is as \[\wa:=(\wa/W) \rtimes W,\] which may be visualized as replicating a copy of the parabolic quotient $(\wa/W)$ in each region of the Coxeter arrangement of $W$.  These two constructions are related as follows.

\begin{proposition}
There is a canonical bijection \begin{align*}\bijQtoW: (\wa/W) &\to \check{Q} \\ w &\mapsto w(0). \end{align*}
\label{def:bijQtoW}
\end{proposition}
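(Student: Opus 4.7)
The plan is to invoke the orbit--stabilizer theorem for the natural action of $\wa$ on $V$, using $0 \in V$ as the basepoint. The map $\bijQtoW$ should send a right coset $\waf W$ to the image of $0$ under any representative $\waf$, so the main tasks are to verify that this is well defined, that the orbit $\wa \cdot 0$ equals $\check{Q}$, and that the stabilizer of $0$ inside $\wa$ equals $W$.

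First, I would check well-definedness on right cosets: if $\waf_1 W = \waf_2 W$, then $\waf_2 = \waf_1 v$ for some $v \in W$, and since every element of $W$ fixes the origin, $\waf_2(0) = \waf_1(v(0)) = \waf_1(0)$. Thus $\waf W \mapsto \waf(0)$ descends to a well-defined map $\wa/W \to V$.

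Next, I would exploit the semidirect product decomposition $\wa = W \ltimes \check{Q}$ recalled in~\Cref{sec:hyp_and_weyl}. Every $\waf \in \wa$ factors uniquely as $\waf = t_\mu u$ with $\mu \in \check{Q}$ and $u \in W$; then $\waf(0) = t_\mu(u(0)) = \mu$. This simultaneously shows that the image of $\bijQtoW$ lies in $\check{Q}$ and that $\bijQtoW$ is surjective, since each $\mu \in \check{Q}$ is realized by the translation $t_\mu$. For injectivity, if $\waf_1(0) = \waf_2(0)$, then $\waf_2^{-1}\waf_1$ fixes $0$; writing $\waf_2^{-1}\waf_1 = t_\nu v$ and observing that $t_\nu v(0) = \nu$, the uniqueness of the factorization forces $\nu = 0$ and hence $\waf_2^{-1}\waf_1 \in W$, whence $\waf_1 W = \waf_2 W$.

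There is no real obstacle here: the argument is essentially orbit--stabilizer, with all the content packaged into the semidirect product decomposition $\wa = W \ltimes \check{Q}$. The bijection is \emph{canonical} in that the only data used are the action of $\wa$ on $V$ and the distinguished origin, which is itself the natural basepoint of the coroot lattice.
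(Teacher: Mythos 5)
Your argument is correct and is exactly the route the paper has in mind: the proposition is stated without proof there, being an immediate consequence of the semidirect product decomposition $\wa = W \ltimes \check{Q}$ recalled in~\Cref{sec:hyp_and_weyl}, which is precisely what you spell out (well-definedness since $W$ fixes $0$, surjectivity via $t_\mu(0)=\mu$, injectivity via uniqueness of the factorization $\waf = t_\mu u$). No gaps; your write-up just makes explicit what the paper leaves implicit.
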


Therefore, the coroot points in $\widetilde{\mathfrak{S}}_a$ are in bijection with the set of $a$-cores.  This suggests that the set of coroot points is the correct generalization of cores to any affine Weyl group $\wa$.  It is natural to write $\core(\wa) := \check{Q}.$

\section{Two Simplices}
\label{sec:dilations_and_cores}

In this section, we recall the definitions of two simplices associated to $\wa$ (\Cref{sec:dilations_and_cores}), and show that they are equivalent up to an explicit rigid transformation (\Cref{thm:wf}). 

\subsection{Dilations of the Fundamental Alcove}

We write 
\[b\ac:=\{x\in V:\langle x,\alpha\rangle\geq 0\text{ for all }\alpha\in\simp\text{ and }\langle x,\tilde{\alpha}\rangle\leq b\}\]
for the $b$-fold dilation of the fundamental alcove, defined for any $b \in \mathbb{R}_{\geq 0}$.  This region is bounded by the hyperplanes
    \[\{H_{\alpha} : \alpha \in \Delta\} \cup \{H_{\alpha_0,b}\}.\]
Its volume is $\rat^n$ times that of the fundamental alcove $\ac$, so it contains $b^n$ alcoves.

\subsection{Sommers Regions} The second simplex, $\Sommers_\Phi(b)$, is defined only for $b$ relatively prime to $h$. In this case write $\rat=th+r$ with $t,r\in\Z_{\geq0}$ and $0<r<h$.
Define the \defn{Sommers region}~\cite{sommers2005b} as
\begin{multline*}
 \Sommers_\Phi(b):=\{x\in V:\langle x,\alpha\rangle\geq-t\text{ for all }\alpha\in \Phi_r\text{ and }
 \langle x,\alpha\rangle\leq t+1\text{ for all }\alpha\in \Phi_{h-r}\}.
\end{multline*}

The significance of the Sommers region is as follows. Define $\wa^b$ as the set of $\waf\in\wa$ that have no inversions of height $b$. So $\wa^b=\{\waf\in\wa:\waf^{-1}(\ar_b)\subseteq\ar^+\}$.
By~\Cref{inv}, we have that $\waf\in\wa^b$ if and only if none of the affine hyperplanes in 
\[\{H_{\alpha}^{-t}:\alpha\in \Phi_r\}\cup\{H_{\alpha}^{-t-1}:\alpha\in \Phi_{r-h}\}=\{H_{\alpha}^{-t}:\alpha\in \Phi_r\}\cup\{H_{\alpha}^{t+1}:\alpha\in \Phi_{h-r}\}\]
separate $\waf\ac^\circ$ from $\ac^\circ$. So $\waf\in\wa^b$ if and only if $\waf\ac^\circ\subseteq\Sommers_{\Phi}(b)$.

\subsection{From $\Sommers_{\Phi}(\rat)$ to $\rat\ac$}
\label{sec:somtoac}

It turns out that $b\ac$ and $\Sommers_\Phi(b)$ are equivalent up to a rigid transformation, which may be realized as an element $\wf \in \wa$.  

\begin{theorem}\label{rhoh}
 For $\rat$ relatively prime to $h$, there exists a unique element $\waf_{\rat}=t_{\mu}w\in\wa$ with
 \[\rat\rhoh=\waf_{\rat}\left(\rhoh\right).\]
 \end{theorem}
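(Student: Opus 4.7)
The plan is to use Lemma \ref{uniquerhoh} twice: once to pin down the target alcove, and once to identify the image of $\rhoh$ inside that alcove. The key observation is that $b\rhoh = (b\check{\rho})/h$ lies in $\frac{1}{h}\cwl$, since $b\check{\rho}\in\cwl$. Given that $\wa$ acts simply transitively on alcoves, uniqueness of $\waf_b$ will reduce to showing that $b\rhoh$ lies in the interior of a single alcove, not on any wall.

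First I would verify that $b\rhoh$ avoids every affine hyperplane $H_\alpha^k$. Since $\langle \rhoh,\alpha\rangle = \h(\alpha)/h$, we have $\langle b\rhoh,\alpha\rangle = b\h(\alpha)/h$, which is an integer if and only if $h \mid b\h(\alpha)$. Because $\gcd(b,h)=1$, this would force $h \mid \h(\alpha)$; but $|\h(\alpha)| \leq h-1$ for all $\alpha\in\Phi$, so this is impossible. Hence $b\rhoh$ lies in $\waf_b\ac^\circ$ for exactly one $\waf_b\in\wa$, and this gives both existence and uniqueness of $\waf_b$.

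Next, I would apply Lemma \ref{uniquerhoh} to the alcove $\waf_b\ac$. The lemma says $\waf_b\ac$ contains exactly one point of $\frac{1}{h}\cwl$. But $\waf_b(\rhoh)$ is such a point (since $\wa$ preserves $\frac{1}{h}\cwl$, and $\rhoh\in\ac$), and $b\rhoh$ is also such a point (as computed above). By uniqueness, $\waf_b(\rhoh)=b\rhoh$, which is exactly the required equation. The decomposition $\waf_b = t_\mu w$ with $\mu\in\check{Q}$, $w\in W$ is automatic from the semidirect product $\wa=W\ltimes\check{Q}$ recalled in~\Cref{sec:hyp_and_weyl}.

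No part of the argument looks technically hard once the right observation is in place; the only step that genuinely uses the hypothesis $\gcd(b,h)=1$ is the verification that $b\rhoh$ lies off every affine hyperplane, and that hypothesis is used in exactly the clean way one would hope. The main conceptual point, which will be important for later sections, is that the rigid motion $\waf_b$ providing the bijection between $\Sommers_\Phi(b)$ and $b\ac$ is completely determined by how it acts on the single point $\rhoh$.
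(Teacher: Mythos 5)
Your proof is correct and follows essentially the same route as the paper: show $b\rhoh$ avoids all affine hyperplanes using $\gcd(b,h)=1$ and $h\nmid\h(\alpha)$, locate it in a unique open alcove, and then invoke Lemma~\ref{uniquerhoh} to identify it with $\waf_b(\rhoh)$. The only difference is that you spell out the application of Lemma~\ref{uniquerhoh} and the uniqueness via simple transitivity slightly more explicitly than the paper does.
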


 \begin{proof}
 For all $\alpha\in\Phi^+$, we have that 
  \[\left\langle \rat\rhoh,\alpha\right\rangle=\rat\frac{\h(\alpha)}{h}\notin\Z,\]
  since $\rat$ is relatively prime to $h$ and $h$ does not divide $\h(\alpha)$. Thus $\rat\rhoh$ lies on no hyperplane of the affine Coxeter arrangement, so it is contained in some alcove $\wf\ac^{\circ}$. Since $\rat\rhoh\in\frac{1}{h}\cwl$ we have that $\rat\rhoh=\wf(\rhoh)$ by Lemma \ref{uniquerhoh}.
\end{proof}
We were unable to find the following result---which explicitly identifies the rigid transformation that sends $\Sommers_\Phi(b)$ to $\rat\ac$ as an element of $\wa$---in the literature, although it is probably well-known to the experts.\footnote{The closest result to~\Cref{thm:wf} we were able to find was an existence statement in~\cite[Proof of Theorem 5.7]{sommers2005b}, which relies upon Lemma 2.2 and the end of Section 2.3 in~\cite{fan1996euler}.  See also~\cite[Theorem 4.2]{athanasiadis2005refinement}.}

\begin{theorem}
 The affine Weyl group element $\waf_{\rat}=t_{\mu}w$ maps $\Sommers_{\Phi}(\rat)$ bijectively to $\rat\ac$.
 \label{thm:wf}
\end{theorem}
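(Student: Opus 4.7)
The plan is to show $\waf_b^{-1}(b\ac) = \Sommers_\Phi(b)$. Since $\waf_b$ is an affine isometry, $\waf_b^{-1}(b\ac)$ is a simplex bounded by the images under $\waf_b^{-1}$ of the $n+1$ walls of $b\ac$; the strategy is to show each such image is one of the defining half-spaces of $\Sommers_\Phi(b)$, and then match them.

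Writing $\waf_b = t_\mu w$, the defining equation $b\rhoh = w(\rhoh) + \mu$ yields $\mu = b\rhoh - w(\rhoh)$, and inverting the $\wa$-action formula of~\Cref{sec:affine_roots} gives
\[\waf_b^{-1}(\beta + \ell\delta) = w^{-1}\beta + \bigl(\ell + \langle\mu,\beta\rangle\bigr)\delta.\]
Pairing $\mu$ with any $\beta \in \Phi$ produces the key identity $h\,\langle\mu,\beta\rangle = b\,\h(\beta) - \h(w^{-1}\beta)$.

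The walls of $b\ac$ correspond to the positive affine roots $\alpha_1,\ldots,\alpha_n$ together with $-\tilde\alpha + b\delta$. Plugging these into the formulas above, the height of $\waf_b^{-1}(\alpha_i) = w^{-1}\alpha_i + \langle\mu,\alpha_i\rangle\delta$ is $\h(w^{-1}\alpha_i) + h\,\langle\mu,\alpha_i\rangle = b\,\h(\alpha_i) = b$, and an analogous calculation shows $\waf_b^{-1}(-\tilde\alpha + b\delta) = -w^{-1}\tilde\alpha + (b - \langle\mu,\tilde\alpha\rangle)\delta$ also has height $b$. Since $b>0$, both images lie in $\ar^+$. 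Writing $b = th + r$ with $0 < r < h$, integrality $\langle\mu,\alpha_i\rangle \in \Z$ (valid since $\mu \in \Q \subset \cwl$) forces $\h(w^{-1}\alpha_i) \in \{r,\,r-h\}$, so the image is of the form $\beta + t\delta$ with $\beta \in \Phi_r$ or $\beta + (t+1)\delta$ with $\beta \in \Phi_{r-h}$. In either case the associated positive half-space coincides with one of the defining inequalities of $\Sommers_\Phi(b)$.

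For $\gcd(b,h) = 1$ the residue $r$ is a Coxeter exponent of multiplicity one, so $|\ar_b| = |\Phi_r| + |\Phi_{h-r}| = n+1$ and $\Sommers_\Phi(b)$ is precisely the simplex cut out by those $n+1$ half-spaces. Consequently $\waf_b^{-1}(b\ac)$ and $\Sommers_\Phi(b)$ are the intersection of the same $n+1$ positive half-spaces, and both contain $\rhoh$ in their interior (for $\waf_b^{-1}(b\ac)$ because $\waf_b^{-1}(b\rhoh) = \rhoh$), so they coincide. The main technical hurdle is the orientation bookkeeping in the previous paragraph---verifying that the image half-space is $\HH_\beta^t$ or $\HH_\beta^{t+1}$ rather than its complement---which is forced by the dichotomy on $\h(w^{-1}\alpha_i)$ combined with the fact that $\waf_b^{-1}$ is an isometry mapping the interior point $b\rhoh$ of $b\ac$ to the interior point $\rhoh$ of $\Sommers_\Phi(b)$.
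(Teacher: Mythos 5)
Your proposal is correct, and it rests on exactly the same computation as the paper's proof: from $b\rhoh = w(\rhoh)+\mu$ you derive $h\langle\mu,\beta\rangle = b\,\h(\beta)-\h(w^{-1}\beta)$, which is precisely the first display of the paper's argument. Where you diverge is in how the proof is finished. The paper reduces this identity modulo $h$ to get $w(\Phi_r\cup\Phi_{r-h})=\simp\cup\{-\tilde{\alpha}\}$ and $\langle\mu,\alpha\rangle\in\{t,t+1\}$ for $\alpha\in\simp$, and then simply cites Fan~\cite[Section 2.3]{fan1996euler} to conclude; you instead finish self-containedly by pulling the $n+1$ walls of $\rat\ac$ back through $\wf^{-1}$, checking each image is an affine root of height $\rat$ (hence a defining half-space of $\Sommers_\Phi(\rat)$), and matching the two half-space descriptions --- arguably a cleaner, citation-free ending. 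The one place you lean on an external input is the count $|\ar_\rat|=|\Phi_r|+|\Phi_{h-r}|=n+1$, which you justify by the (true, but nontrivial and itself usually proved via Springer's regular-element theory or a case check) fact that every residue coprime to $h$ is an exponent of multiplicity one. You can avoid this entirely using your own key identity: it gives $\h(w^{-1}\beta)\equiv \rat\,\h(\beta)\equiv r\,\h(\beta)\pmod h$, so $w^{-1}$ restricts to a bijection from the $n+1$ roots of height $\equiv 1\pmod h$ (namely $\simp\cup\{-\tilde{\alpha}\}$) onto $\Phi_r\cup\Phi_{r-h}$; this yields both the cardinality $n+1$ and the exact matching of walls in one stroke, and is in effect what the paper's mod-$h$ step accomplishes before handing off to Fan. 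With that substitution your argument is complete and, if anything, more self-contained than the published one; the remaining bookkeeping (distinct affine roots give distinct half-spaces since $\Phi$ is reduced, the half-space orientation via the compatibility $\waf(\HH_\alpha^k)=\HH_\beta^l \iff \waf(\alpha+k\delta)=\beta+l\delta$, and the interior point $\rhoh$) is all handled correctly.
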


 \begin{proof}
  We calculate that
  \begin{align*}
   \frac{\h(\alpha)}{h}=\left\langle\rhoh,\alpha\right\rangle=\left\langle w\left(\rhoh\right),w(\alpha)\right\rangle=\left\langle \rat\rhoh-\mu,w(\alpha)\right\rangle=\rat\frac{\h(w(\alpha))}{h}-\langle\mu,w(\alpha)\rangle
  \end{align*}
  
  Thus $\h(\alpha)=\rat\h(w(\alpha))-h\langle\mu,w(\alpha)\rangle$. Again write $\rat=th+r$ with $t,r\in\Z$ and $0<r<h$.
  So reducing modulo $h$ we get $\h(\alpha)\equiv r\h(w(\alpha))\mod h$.
  Thus $\h(\alpha)\equiv r\mod h$ if and only if $\h(w(\alpha))\equiv 1\mod h$. So 
  
  \[w(\Phi_r\cup \Phi_{r-h})=\Phi_1\cup \Phi_{1-h}=\simp\cup\{-\tilde{\alpha}\}.\]
  
  For $\alpha\in\simp$, we have
  \begin{align*}
   \frac{\h(w^{-1}(\alpha))}{h}=\rat\frac{\h(\alpha)}{h}-\langle\mu,\alpha\rangle=\frac{\rat}{h}-\langle\mu,\alpha\rangle.
  \end{align*}
  
  Now $\h(w^{-1}(\alpha))$ equals either $r$ or $r-h$, so $\langle\mu,\alpha\rangle=t$ if $w^{-1}(\alpha)\in\Phi^+$ and $\langle\mu,\alpha\rangle=t+1$ if $w^{-1}(\alpha)\in-\Phi^+$.
  Comparing with \cite[Section 2.3]{fan1996euler} ($w=w'$, $\mu=\nu$) gives the result.
 \end{proof}

We stress that this bijection sends the set of coweight lattice points in $\Sommers_\Phi(b)$ to the set of coweight lattice points in $\rat\ac$, and similarly for coroot lattice points.

Now~\cite[Theorem 8.2]{thiel2015anderson} implies that this element is unique.
\begin{theorem}
 $\waf_{\rat}$ is the unique $\waf\in\wa$ with $\waf(\Sommers_\Phi(b))=\rat\ac$.
\end{theorem}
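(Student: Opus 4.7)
The plan is to derive uniqueness from the triviality of the $\wa$-stabilizer of $\rat\ac$. If $\waf\in\wa$ satisfies $\waf(\Sommers_{\Phi}(\rat))=\rat\ac$, then by~\Cref{thm:wf} the composition $v:=\waf\wf^{-1}\in\wa$ preserves $\rat\ac$ setwise, since $v(\rat\ac)=\waf(\wf^{-1}(\rat\ac))=\waf(\Sommers_{\Phi}(\rat))=\rat\ac$. Hence it suffices to show that any $v\in\wa$ fixing $\rat\ac$ setwise equals the identity.

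Write $v=t_{\mu}w$ with $\mu\in\Q$ and $w\in W$. Being an isometry of $V$ preserving the simplex $\rat\ac$, the element $v$ permutes its $n+1$ vertices $V_0:=0$ and $V_i:=\tfrac{\rat}{c_i}\cw_i$ for $i=1,\ldots,n$. In particular $\mu=v(V_0)$ is a vertex of $\rat\ac$ lying in $\Q$. The key point is that $V_0$ is the \emph{only} vertex of $\rat\ac$ in $\Q$ when $\gcd(\rat,h)=1$. Indeed, the classical list of marks shows that every prime dividing any $c_i$ also divides $h$, so $\gcd(\rat,h)=1$ implies $\gcd(\rat,c_i)=1$ for every $i$; thus if $c_i\geq 2$ then $c_i\nmid\rat$, whence $\langle V_i,\alpha_i\rangle=\rat/c_i\notin\Z$ gives $V_i\notin\cwl\supseteq\Q$. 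If instead $c_i=1$ with $i\neq 0$, then $\cw_i$ lies in the set $\Gamma$ of vertices of $\ac$ appearing in the proof of~\Cref{thm:cycl_action_and_Q} and represents a nonidentity class in $\cwl/\Q\cong\cycl$; moreover $\gcd(\rat,f)=1$ since every prime of $f$ divides $h$ (as noted after~\Cref{thm:cycl_action_and_Q}), so multiplication by $\rat$ is an automorphism of $\cwl/\Q$, forcing $V_i=\rat\cw_i\notin\Q$ as well.

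It follows that $\mu=0$, so $v=w\in W$. The relation $w(\rat\ac)=\rat\ac$ rescales via linearity of $w$ to $w(\ac)=\ac$; since $\ac^\circ\subseteq C$, this forces $w(C)=C$, and hence $w=e$ by the simple transitivity of the $W$-action on Weyl chambers. Therefore $v=e$ and $\waf=\wf$.

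The main obstacle is the arithmetic ingredient that primes dividing a mark $c_i$ divide $h$; this is easily verified case-by-case but a uniform justification would give a fully type-independent argument. A possible conceptual alternative is to identify $\rat\rhoh=\wf(\rhoh)$ as an intrinsic invariant of the pair $(\Sommers_{\Phi}(\rat),\rat\ac)$---for instance via centroids in the simply-laced case, where one can verify that the centroid of $\ac$ equals $\rhoh$---and then invoke~\Cref{rhoh} to recover $\wf$; we do not pursue this here.
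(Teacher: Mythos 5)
Your argument is correct, and it takes a genuinely different route from the paper: the paper disposes of uniqueness in one line by citing an external result (Theorem 8.2 of M.~Thiel, \emph{From Anderson to Zeta}), whereas you give a self-contained proof that the setwise stabilizer of $\rat\ac$ in $\wa$ is trivial. Your reduction $v:=\waf\wf^{-1}$, the vertex analysis (an affine bijection preserving the simplex $\rat\ac$ permutes its vertices, so $\mu=v(0)$ must be a vertex lying in $\Q$), and the final step $w(\ac)=\ac\Rightarrow w(C)=C\Rightarrow w=e$ are all sound. The arithmetic inputs you need---that every prime dividing a mark $c_i$ divides $h$, and that every prime dividing $f$ divides $h$ (so that $\Gamma$-vertices $\rat\cw_i$ with $c_i=1$ stay outside $\Q$, via \Cref{thm:cycl_action_and_Q})---are exactly the case-by-case facts the paper itself already invokes (in the footnote to Haiman's theorem and in the remark following \Cref{thm:cycl_action_and_Q}), so your proof meets the paper's own standard of rigor. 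What your approach buys is independence from the external reference and an explicit geometric reason for uniqueness (the only $\Q$-point among the vertices of $\rat\ac$ is the origin when $\gcd(\rat,h)=1$); what the paper's citation buys is brevity and, presumably, a statement already proved uniformly elsewhere. One small stylistic point: your closing speculation about centroids is unnecessary, since the vertex argument already covers all simply-laced (indeed all) types.
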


\section{Simultaneous Cores}
\label{sec:simultanteous_cores}

  We generalize simultaneous cores to the points $\Sommers_\Phi(b) \cap \check{Q}$ (\Cref{def:w_cores}).  In~\Cref{sec:enumeration}, we recall M.~Haiman's result on the number of such simultaneous cores (\Cref{thm:haiman_number}).

\subsection{Definition}

We now recall how to identify simultaneous $(a,b)$-cores using the bijection of~\Cref{def:bijQtoW}.  Let $\bijqtocore := \bijtocore \circ \bijQtoW^{-1}: \check{Q} \to \core(a)$.

\begin{proposition}[\cite{garvan1990cranks,johnson2015lattice}]
    For $\gcd(a,b)=1$, \[\core(a,b) = \{ \bijqtocore(\lambda) : \lambda \in \Sommers_{\Phi}(b) \cap \check{Q}\},\] where $\Phi$ is a root system for $\mathfrak{S}_a$.
\end{proposition}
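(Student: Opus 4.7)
My plan is to prove the proposition by composing three ingredients: a classical combinatorial translation between $(a,b)$-cores and minimal-length affine permutations that have no height-$b$ inversion, the alcove-theoretic characterization of $\wa^b$ already given in the excerpt, and a counting argument via Anderson's and Haiman's formulas.

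The first and main step is to show that under the poset isomorphism $\bijtocore$ of \Cref{thm:weak_order_and_cores}, the subset $\core(a,b) \subseteq \core(a)$ corresponds precisely to $\wa^b \cap (\widetilde{\mathfrak{S}}_a/\mathfrak{S}_a)$. To do this I would invoke the abacus model: for a minimal-length $\waf \in \widetilde{\mathfrak{S}}_a/\mathfrak{S}_a$ with $\lambda = \bijtocore(\waf)$, the hook lengths of $\lambda$ are in explicit bijection with the heights of the inversions of $\waf$, a folklore consequence of the abacus model (cf.\ \cite{garvan1990cranks}). Under this correspondence, the hook-length condition defining a $b$-core (no hook length divisible by $b$) translates to a condition on inversion heights; for $\gcd(a,b)=1$ a short argument reduces ``no inversion of height divisible by $b$'' to the condition ``no inversion of height exactly $b$'', giving $\lambda \in \core(a,b) \iff \waf \in \wa^b$.

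Next I would appeal to the characterization already recorded in the excerpt: $\waf \in \wa^b$ if and only if $\waf \ac^\circ \subseteq \Sommers_\Phi(b)$. Since $\Sommers_\Phi(b)$ is closed (its defining inequalities are weak), taking closures shows that $\waf(0) \in \Sommers_\Phi(b)$ whenever $\waf \in \wa^b$. Combined with \Cref{def:bijQtoW}, this yields an injection
\[
\bijQtoW \colon \wa^b \cap (\widetilde{\mathfrak{S}}_a/\mathfrak{S}_a) \hookrightarrow \Sommers_\Phi(b) \cap \check{Q},
\]
whose domain, by Step~1, is identified with $\core(a,b)$ via $\bijtocore$. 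To upgrade this injection to a bijection I would count: Anderson's \Cref{thm:anderson_number} gives $|\core(a,b)| = \tfrac{1}{a+b}\binom{a+b}{b}$, while Haiman's \Cref{thm:haiman_number} specialized to type $A_{a-1}$ (where $h=a$, $|W|=a!$, and $e_i = i$) gives the same value for $|\Sommers_\Phi(b) \cap \check{Q}|$. The injection must therefore be a bijection, and the proposition follows.

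The hard part will be Step~1: carefully translating hook lengths of the $a$-core $\bijtocore(\waf)$ into heights of inversions of $\waf$ and then using $\gcd(a,b)=1$ to pass from ``divisible by $b$'' to ``exactly $b$''. Once this classical dictionary is in hand, the remaining steps are bookkeeping with results already in the excerpt.
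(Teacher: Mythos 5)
The paper itself offers no proof of this proposition---it is imported from Garvan--Kim--Stanton and Johnson---so your argument has to stand on its own. Its outer skeleton is viable: \emph{if} you knew that an $a$-core $\lambda$ is also a $b$-core exactly when the representative $\waf$ with $\waf\cdot\emptyset=\lambda$ lies in $\wa^b$, then $\waf\ac^\circ\subseteq\Sommers_\Phi(b)$ together with $0\in\ac$ and the closedness of $\Sommers_\Phi(b)$ gives $\waf(0)\in\Sommers_\Phi(b)\cap\check{Q}$, the map $\bijQtoW$ is injective, and the agreement of \Cref{thm:anderson_number} with \Cref{thm:haiman_number} in type $A_{a-1}$ (both equal $\tfrac{(a+b-1)!}{a!\,b!}$) upgrades the injection to a bijection; this counting finish is heavier than, but independent of and not circular with, the direct abacus bijections in the cited sources.

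The genuine gap is your Step~1, and the ``folklore'' dictionary you invoke for it is false: the hook lengths of $\lambda$ are \emph{not} in bijection with the heights of the inversions of $\waf$, since $\waf$ has only $l(\waf)$ inversions while $\lambda$ has $\size(\lambda)$ hooks, and these numbers differ in general. Concretely, for $\lambda=(3,1,1)$ and $\waf=s_1s_2s_1s_0$ in $\widetilde{\mathfrak{S}}_3$ (\Cref{ex:heights}) there are five hooks $\{5,2,2,1,1\}$ but only four inversions; worse, the conventions matter: the heights of $\inv(\waf^{-1})$ (the inversions the paper uses to define $\size$) form the multiset $\{1,2,2,4\}$, which \emph{contains} $4$ even though $(3,1,1)$ is a $(3,4)$-core, whereas the heights of $\inv(\waf)$ are $\{1,1,2,5\}$. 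So the criterion you need is specifically ``no element of $\inv(\waf)$ has height exactly $b$,'' i.e.\ $\waf\in\wa^b$, and it cannot be extracted from a hook-length\,$\leftrightarrow$\,inversion-height matching. Establishing that equivalence is essentially the entire content of the proposition (given the paper's observation that $\waf\in\wa^b$ iff $\waf\ac^\circ\subseteq\Sommers_\Phi(b)$), and it requires the genuine abacus/$\beta$-set argument---first-column hook lengths as bead positions, ``$b$-core'' meaning the bead set is closed under subtracting $b$---which is exactly what your sketch defers and what the cited references actually prove. A minor further point: the passage from ``no hook divisible by $b$'' to ``no hook equal to $b$'' is a general fact about partitions and does not use $\gcd(a,b)=1$.
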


We conclude that the coroot points in $\Sommers_\Phi(b)$ generalize the set of simultaneous $(a,b)$-cores to all $\wa$.  We emphasize this with the following definition.

\begin{definition}
    For $b$ relatively prime to $h$, we write \[\core(\wa,b) := \Sommers_\Phi(b) \cap \check{Q}.\]
\label{def:w_cores}
\end{definition}

\subsection{Enumeration:~\Cref{thm:haiman_number}}
\label{sec:enumeration}

We could count $\left|\core(\wa,b)\right|$ using the relationship between the lattices $\check{\Lambda}$ and $\check{Q}$ given in~\Cref{thm:cycl_action_and_Q}, and between the simplices $\Sommers_\Phi(b)$ and $b\ac$ in~\Cref{thm:wf}.  If we write a coweight lattice point in the coweight basis as $(x_0,x_1,x_2,\cdots,x_n)$, then the coweight points $b\ac \cap \check{\Lambda}$ are exactly the positive integral solutions to the linear equation
\begin{equation}
\sum_{i=0}^n c_i x_i = b, \hspace{5em} 0 \leq x_i \in \mathbb{Z}.
\label{eq:weights_inside_ba}
\end{equation}

 where $\tilde{\alpha}=\sum_{i=1}^n c_i \alpha_i$ and we take $c_0:=1$.  This last reformulation is easily counted~\cite{suter1998number}: simply expand the generating function \[\prod_{i=0}^n \frac{1}{1-q^{c_i}}=\sum_{b=0}^\infty \left|b\ac \cap \check{\Lambda} \right|q^b.\]

\begin{example}
For an affine root system of type $\mathfrak{S}_a$, we have that $c_i=1$.  It is clear that there are $\binom{a+b-1}{b}$ coweight lattice points that are solutions to~\Cref{eq:weights_inside_ba}.  Dividing by the index of connection, we obtain the corresponding number of coroot lattice points $\left| \core(a,b)\right| = \frac{1}{a} \binom{a+b-1}{b}$.
\end{example}

To do this sort of type-by-type analysis, however, is to overlook M.~Haiman's beautiful \emph{nearly} uniform proof\footnote{M.~Haiman's proof uses the fact that any prime that divides a coefficient of the highest root also divides the Coxeter number $h$, for which we don't know a uniform proof.}, which combines P\'{o}lya theory, the Shephard-Todd formula, and---perhaps most surprisingly---Dirichlet's theorem that any infinite arithmetic sequence of positive integers contains an infinite number of primes~\cite{haiman1994conjectures}.  See also~\cite{sommers2005b}.

{
\renewcommand{\thetheorem}{\ref{thm:haiman_number}}
\begin{theorem}[{Number of $(\wa,b)$-cores; M.~Haiman~\cite{haiman1994conjectures}}]
$ $\newline
	For $\gcd(h,b)=1$, \[\left|\core(\wa,b)\right| = \frac{1}{|W|} \prod_{i=1}^n (b+e_i).\]
\end{theorem}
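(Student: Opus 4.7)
\emph{Plan.} The strategy is to reduce the count of $\core(\wa,b)$ to a count of coweight lattice points in $b\ac$, and then identify the resulting (quasi)polynomial with the product on the right-hand side using the machinery indicated in the introduction.

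\emph{Step 1 (Sommers $\to b\ac$; coroots $\to$ coweights).} By~\Cref{thm:wf}, the element $\wf\in\wa$ bijects $\Sommers_\Phi(b)\to b\ac$; since $\wa=W\ltimes\check Q$ stabilizes the coroot lattice, this restricts to a bijection $\core(\wa,b)\xrightarrow{\sim}b\ac\cap\check Q$. By the remark following~\Cref{thm:cycl_action_and_Q}, every prime dividing the index of connection $f$ also divides $h$, so $\gcd(h,b)=1$ implies $\gcd(f,b)=1$. Thus~\Cref{thm:cycl_action_and_Q} applies: $b\cycl$ stabilizes $b\ac$ and each $b\cycl$-orbit on $b\ac\cap\cwl$ contains exactly one $\check Q$-point, so
\[|\core(\wa,b)| = \frac{|b\ac\cap\cwl|}{f}.\]

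\emph{Step 2 (counting on $\cwl$ and identification).} Equation~(\ref{eq:weights_inside_ba}) identifies $|b\ac\cap\cwl|$ with $[q^b]\prod_{i=0}^n(1-q^{c_i})^{-1}$ (with $c_0:=1$), a quasipolynomial in $b$ of period dividing $\lcm_i c_i$. A case-by-case verification (analogous to the one for $f$) shows every prime dividing $\lcm_i c_i$ also divides $h$, so on each residue class of $b$ coprime to $h$ the quasipolynomial reduces to a single polynomial of degree $n$. Following Haiman, I would then identify this polynomial with $\frac{f}{|W|}\prod(b+e_i)$ by: (i) invoking Dirichlet's theorem to produce infinitely many primes $p$ in each residue class of $b$ coprime to $h$; (ii) for such $p$, interpreting $|\Sommers_\Phi(p)\cap\check Q|$ as a $W$-orbit count on $\check Q/p\check Q$ via Burnside; (iii) computing each term $|\{x\in\check Q/p\check Q:wx=x\}|$ through the Smith normal form of $w-1$; and (iv) collapsing the resulting character sum, by the Shephard--Todd/Molien formula and the relation $|W|=\prod(e_i+1)$, into $\frac{1}{|W|}\prod(p+e_i)$. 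Polynomiality in $b$ within each residue class then promotes the identity from primes to all $b$ coprime to $h$.

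\emph{Main obstacle.} The coefficients $c_i$ of the highest root and the exponents $e_i$ of $W$ are a priori unrelated datasets, and no direct algebraic manipulation of $[q^b]\prod(1-q^{c_i})^{-1}$ will produce the product $\prod(b+e_i)$; the Pólya--Shephard--Todd--Dirichlet argument succeeds only by routing through the intermediate $W$-action on $\check Q/p\check Q$ for primes $p$. Executing steps (ii)--(iv) uniformly across all simply-laced (indeed all affine) types---and in particular verifying the supporting case-by-case inputs that every prime dividing $f$ (respectively $\lcm_i c_i$) already divides $h$---is the genuinely delicate ingredient of Haiman's proof.
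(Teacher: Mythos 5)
Your outline is correct and matches the paper's own treatment: the paper does not reprove this result but cites M.~Haiman (and E.~Sommers for the $\Sommers_\Phi(b)$ formulation), indicating exactly the ingredients you list---the reduction to coweight points of $b\ac$ via \Cref{thm:wf} and \Cref{thm:cycl_action_and_Q} together with the case-by-case fact that every prime dividing $f$ (or a coefficient $c_i$) divides $h$, and Haiman's P\'olya/Burnside, Shephard--Todd, Dirichlet argument identifying the count with $\frac{1}{|W|}\prod_{i=1}^n(b+e_i)$. The one step you gloss (attributing it to ``Burnside''), namely that $|\Sommers_\Phi(b)\cap\check{Q}|$ equals the number of $W$-orbits on $\check{Q}/b\check{Q}$, really rests on $b\ac$ being an exact fundamental domain for $W\ltimes b\check{Q}$ (equivalently Sommers' theorem, which the paper cites), but the paper likewise leaves this to the literature.
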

\addtocounter{theorem}{-1}
}

This number has since become known as the \defn{rational Catalan number} associated to $W$ and $b$ (see, for example,~\cite{armstrong2013rational}).

\section{The Statistic $\size$}
\label{sec:statistics}


In this section, we interpret the statistic $\size$, which counts the number of boxes in the Ferrers diagram of an $a$-core, in the language of root systems.  We then extend this statistic to any affine Weyl group.

\subsection{$\size$ on Elements of $\wa$}
\begin{definition}
For $\waf \in \wa$, define
\[\size(\waf):=\sum_{\alpha + k \delta \in \inv(\waf^{-1})} k.\]
\label{def:size_on_elements}
\end{definition}

\begin{example}
    The core $\lambda = \raisebox{0.5\height}{\tiny\ydiagram{3,1,1}\normalsize}$ in $\widetilde{A}_2$ has 5 boxes.  The corresponding affine element $\waf = s_1s_2s_1s_0$---illustrated in~\Cref{fig:weak_order_and_cores}---has $\size(\waf)=5$ because the inversions of $\waf^{-1}$ are the affine roots $-\amax+1\cdot\delta, -\alpha_1+1\cdot\delta,-\alpha_2+1\cdot\delta,-\amax+2\cdot\delta,$ and $5=1+1+1+2.$
\label{ex:heights}
\end{example}

\begin{remark}
We specify here that the statistic $\size$ on elements of $\check{Q}$ in type $\widetilde{C}_n$ is \emph{not} equal to the number of boxes of the corresponding self-conjugate core (a model studied, for example, in~\cite{baldwin2006self,ford2009self,hanusa2013number,chen2014average,alpoge2014self}).  For example, one can compute that the element $\waf=s_0s_1s_0s_1s_2s_1s_0\in \widetilde{C}_2$ has $\size(\waf) = 11$, but that $\waf$ corresponds to the self-conjugate core \raisebox{1.5\height}{\tiny\ydiagram{6,3,3,1,1,1}\normalsize}, which has 15 boxes.

There is a simple way to read off $\size$ in $\widetilde{C}_n$ on a self-conjugate core, which we state here without proof: weight by 2 those boxes $(i,j)$ such that $i < j$ and $j-i = 0 \mod n$, by 1 the remaining boxes $(i,j)$ such that $i \leq j$, and by 0 all other boxes.  Then $\size$ is given by the sum of the weights of the boxes.  For $\waf$ as above, we have the weighting \raisebox{1.5\height}{\tiny\begin{ytableau}1 &1&2&1&2&1 \\ 0&1&1\\0&0&1 \\ 0\\0\\0\end{ytableau}\normalsize}.  The sum of these weights is the desired $11=\size(\waf)$.
\label{rem:size_not_number}
\end{remark}

The statistic $\size$ is preserved under the bijection between minimal coset representatives of $\widetilde{\mathfrak{S}}_a/\mathfrak{S}_a$ and $a$-cores.

\begin{proposition}
The bijection \begin{align*}
  \bijtocore:\widetilde{\mathfrak{S}}_a/\mathfrak{S}_a&\rightarrow\core(a)\\
  \waf\mapsto \waf\cdot\emptyset
 \end{align*}
 preserves $\size$.
\label{prop:size_in_type_a}
\end{proposition}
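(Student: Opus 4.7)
The plan is to induct on $\ell(\waf)$. The base case $\waf = e$ is immediate: both $e$ and $\emptyset$ have $\size = 0$.

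For the inductive step, pick a simple affine reflection $s_i$ such that $\waf' := s_i \waf$ is a minimal-length coset representative with $\ell(\waf') = \ell(\waf) + 1$. By~\Cref{thm:weak_order_and_cores}, the cover $\waf \lessdot \waf'$ corresponds under $\bijtocore$ to the cover $\lambda \lessdot s_i \lambda$ in $\core(a)$, where $\lambda := \bijtocore(\waf)$. By the definition of the $\widetilde{\mathfrak{S}}_a$-action on cores, $s_i \lambda$ is obtained from $\lambda$ by adding all of its addable boxes of content $i$ (none are removable, since $\size$ strictly increases across the cover); denote their number by $m$, so $\size(s_i \lambda) - \size(\lambda) = m$. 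On the affine-group side, the standard Coxeter-group identity
\[
\inv(u s_j) \;=\; \inv(u) \sqcup \{u(\alpha_j)\}, \qquad \text{whenever } \ell(u s_j) = \ell(u) + 1,
\]
applied with $u = \waf^{-1}$ and $j = i$, yields
\[
\inv((\waf')^{-1}) \;=\; \inv(\waf^{-1}) \sqcup \{\waf^{-1}(\alpha_i)\},
\]
and hence $\size(\waf') - \size(\waf) = k$, where $k$ is the $\delta$-coefficient of the positive affine root $\waf^{-1}(\alpha_i)$.

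It remains to prove $k = m$. Writing $\waf = w t_{\mu}$ with $w \in \mathfrak{S}_a$ and $\mu \in \check{Q}$, the formula for the action of $\wa$ on $\ar$ from~\Cref{sec:affine_roots} gives $k = \langle \mu, w^{-1} \alpha_i \rangle$ for $1 \leq i \leq a-1$, and $k = 1 - \langle \mu, w^{-1} \amax \rangle$ for $i = 0$. On the other hand, the classical abacus description of $a$-cores (see~\cite{james1981representation}) identifies $\lambda$ with a balanced $a$-abacus whose bead positions are read off from the window notation of $\waf$, and in this dictionary the number $m$ of addable boxes of content $i$ can be expressed by the same inner-product formulas. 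A direct check, handling the cases $i \neq 0$ and $i = 0$ separately, matches these two expressions and closes the induction. The main obstacle is setting up the translation between the root-theoretic data of $\waf$ and the abacus combinatorics of $\lambda$; once this dictionary is in place, the final equality $k = m$ is a short computation.
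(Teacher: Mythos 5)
Your inductive skeleton is sound and is essentially the same as the paper's: induct on length, observe that passing from $\waf$ to $\waf'=s_i\waf$ adds the single inversion $\waf^{-1}(\alpha_i)$ to $\inv\bigl((\waf')^{-1}\bigr)$, so $\size(\waf')-\size(\waf)$ equals the $\delta$-coefficient $k$ of that affine root; and your formulas $k=\langle\mu,w^{-1}\alpha_i\rangle$ for $1\le i\le a-1$ and $k=1-\langle\mu,w^{-1}\amax\rangle$ for $i=0$, with $\waf=wt_\mu$, are correct. The problem is that the remaining equality $k=m$ --- that the number of content-$i$ boxes added to $\lambda=\bijtocore(\waf)$ equals this $\delta$-coefficient --- is the entire substance of the proposition, and you do not prove it. You assert that under ``the classical abacus description'' the number $m$ ``can be expressed by the same inner-product formulas'' and that ``a direct check'' closes the induction, but the dictionary between the window/abacus data of $\waf$ and the pair $(w,\mu)$ is never set up, the expression for $m$ is never derived, and the $i=0$ case (where the extra $+1$ has to emerge from the abacus combinatorics) is exactly where such a check is delicate. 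As written, this is a description of what remains to be done rather than an argument, so there is a genuine gap at the crucial step.

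For comparison, the paper's proof has the same structure but discharges precisely this step by citing a known result, namely Proposition 8.2 of \cite{fishel2010bijection}: if $l(s_i\waf)>l(\waf)$, then $s_i\kappa$ has $k$ more boxes than $\kappa=\bijtocore(\waf)$ if and only if the unique hyperplane separating $\waf^{-1}s_i\ac^\circ$ from $\waf^{-1}\ac^\circ$ is of the form $H_{\alpha}^{k}$ with $\alpha\in\Phi^+$; summing along a reduced word and translating separating hyperplanes into inversions via \Cref{inv} then gives $\size(\waf\cdot\emptyset)=\size(\waf)$. To complete your version you should either invoke such a statement explicitly or actually carry out the computation you sketch: fix the bijection between $a$-cores and balanced abaci (equivalently, window notations for $\widetilde{\mathfrak{S}}_a/\mathfrak{S}_a$), express the number of addable boxes of residue $i$ in terms of that data, and verify that it equals $\langle\mu,w^{-1}\alpha_i\rangle$, respectively $1-\langle\mu,w^{-1}\amax\rangle$ when $i=0$.
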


 \begin{proof}
  By~\cite[Proposition 8.2]{fishel2010bijection}, if $\kappa=\bijtocore(\waf)$ is an $a$-core, $k>0$ and $s_i\in\widetilde{\simp}$ is such that $l(s_i\waf)>l(\waf)$,
  then $s_i\kappa$ has $k$ more boxes than $\kappa$ if and only if the unique hyperplane that separates $\waf^{-1}s_i\ac^\circ$ from $\waf^{-1}\ac^\circ$ is of the form $H_{\alpha}^k$ for some $\alpha\in\Phi^+$.
  Thus by induction on $l(\waf)$ the size of the core $\waf\cdot\emptyset$ is
  \[\sum_{\substack{H_{\alpha}^k\text{ separates $\waf^{-1}\ac^\circ$ from $\ac^\circ$}\\\alpha\in\Phi^+}}k,\]
  which equals
  \[\sum_{\alpha+k\delta\in\mathsf{Inv}(\waf^{-1})}k\]
  by Theorem \ref{inv}. Therefore $\size(\waf\cdot\emptyset)=\size(\waf)$.
 \end{proof}

The statistic $\size$ of~\Cref{def:size_on_elements} therefore generalizes the statistic $\size$ on $a$-cores to all elements of an affine Weyl group.

\subsection{$\size$ as a Quadratic Form}

We can also view $\size$ as a statistic on the coroot lattice $\Q$.
To do this, we follow~\cite{macdonald1971affine}, although it is possible to argue directly using the ``strange identities'' of~\Cref{sec:strange_identities}.
I.~G.~Macdonald views the affine root $\alpha+k\delta\in\ar$ as the affine linear functional
\begin{align*}
 \alpha+k\delta:V&\rightarrow\R\\
 x&\mapsto\langle x,\alpha\rangle+k.
\end{align*}
For $\waf\in\wa$ he defines $s(\waf):=\sum_{\alpha+k\delta\in\mathsf{Inv}(\waf)}\alpha+k\delta$.
Furthermore, I.~G.~Macdonald introduces a quadratic form $\Psi$ as
\[\Psi(x):=\frac{g}{2}\left\|x-\rhog\right\|^2.\]

\begin{lemma}[Proposition 7.5 in Macdonald]
 We have 
 \[s(\waf)=\Psi\circ \waf^{-1}-\Psi\]
 for all $\waf\in\wa$.
 \label{lem:quad_form_psi}
\end{lemma}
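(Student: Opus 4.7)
The plan is to show that both sides of the claimed identity satisfy the same twisted cocycle relation on $\wa$, and then to verify the identity on the affine simple reflections, which generate $\wa$. Define $T(\waf) := \Psi \circ \waf^{-1} - \Psi$. A short algebraic manipulation gives
\[T(\waf\waf')=\Psi\circ(\waf')^{-1}\circ\waf^{-1}-\Psi=\bigl(\Psi\circ(\waf')^{-1}-\Psi\bigr)\circ\waf^{-1}+\bigl(\Psi\circ\waf^{-1}-\Psi\bigr)=T(\waf)+T(\waf')\circ\waf^{-1}.\]
Under the identification of an affine root $\alpha+k\delta$ with the affine functional $x\mapsto\langle x,\alpha\rangle+k$, one checks directly that $\waf(\alpha+k\delta)$ (in the sense defined in~\Cref{sec:affine_roots}) equals $(\alpha+k\delta)\circ\waf^{-1}$ as affine functionals. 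Thus it suffices to prove the matching cocycle $s(\waf\waf')=s(\waf)+\waf(s(\waf'))$ for the inversion-sum.

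\textbf{The cocycle for $s$.} Partition $\inv(\waf\waf')$ according to the sign of $\waf^{-1}\beta$. On the one hand, $\{\beta\in\inv(\waf\waf'):\waf^{-1}\beta\in\ar^+\}=\waf\bigl(\inv(\waf')\setminus Y'\bigr)$, where $Y':=\{\gamma\in\inv(\waf'):\waf\gamma\in-\ar^+\}$. On the other hand, $\{\beta\in\inv(\waf\waf'):\waf^{-1}\beta\in-\ar^+\}=\inv(\waf)\setminus Y$, where $Y:=\{\beta\in\inv(\waf):-\waf^{-1}\beta\in\inv(\waf')\}$. The map $\beta\mapsto-\waf^{-1}\beta$ gives a bijection $Y\leftrightarrow Y'$ under which $\waf\gamma=-\beta$, so the contributions of $Y$ and $Y'$ cancel in the summed identity, leaving
\[s(\waf\waf')=\sum_{\gamma\in\inv(\waf')}\waf\gamma+\sum_{\beta\in\inv(\waf)}\beta=\waf(s(\waf'))+s(\waf),\]
which is the sought cocycle.

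\textbf{Agreement on simple reflections and conclusion.} For a simple reflection $s_i$, the unique inversion is the simple affine root $\alpha_i$, so $s(s_i)=\alpha_i$. For $T(s_i)$, use that $s_i$ is an affine isometry: $\Psi(s_ix)=\tfrac{g}{2}\|x-s_i(\rho/g)\|^2$, expand, and apply the identities $\langle\rho,\alpha_i^\vee\rangle=1$ (whence $s_i(\rho/g)=\rho/g-\alpha_i/g$ for $i\geq 1$) and $\langle\rho,\tilde\alpha\rangle=g-1$ together with $\|\tilde\alpha\|^2=2$ (whence $s_0(\rho/g)=\rho/g+\tilde\alpha/g$). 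The linear and constant terms combine to yield $T(s_i)(x)=\langle x,\alpha_i\rangle$ for $i\geq 1$ and $T(s_0)(x)=1-\langle x,\tilde\alpha\rangle$, matching $\alpha_i$ and $\alpha_0=-\tilde\alpha+\delta$ as affine functionals. Induction on $l(\waf)$ using the common cocycle then yields $s=T$ on all of $\wa$. The main obstacle is the bookkeeping in the cocycle for $s$: the two "overflow" sets $Y$ and $Y'$ must be identified and shown to cancel via the involution $\beta\leftrightarrow-\waf^{-1}\beta$; everything else is either linear algebra or a direct computation on generators.
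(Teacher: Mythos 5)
Your proof is correct; note, however, that the paper itself gives no argument for this lemma at all---it is quoted as Proposition 7.5 of Macdonald \cite{macdonald1971affine}---so there is no in-paper proof to compare against, and your write-up supplies a self-contained one. The structure you chose is the natural one: both $s$ and $T(\waf)=\Psi\circ\waf^{-1}-\Psi$ are $1$-cocycles for the action of $\wa$ on affine-linear functions, and your identification $\waf(\alpha+k\delta)=(\alpha+k\delta)\circ\waf^{-1}$ is exactly what the paper's formula $\waf(\alpha+k\delta)=w(\alpha)+(k-\langle\mu,\alpha\rangle)\delta$ for $\waf=wt_{\mu}$ amounts to, so the two cocycles really do match under this dictionary. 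The bookkeeping in your cocycle for $s$ is right: your two descriptions of $\{\beta\in\inv(\waf\waf'):\waf^{-1}\beta\in\pm\ar^+\}$ are correct, the map $\beta\mapsto-\waf^{-1}\beta$ is a bijection $Y\to Y'$ with $\waf\gamma=-\beta$, and the cancellation uses only that the action on affine roots is linear; since no length-additivity is assumed, the induction on $l(\waf)$ via $\waf=s_i(s_i\waf)$ goes through without further care. The base case also checks out: $\inv(s_i)=\{\alpha_i\}$ is the standard fact that an (affine) simple reflection permutes $\ar^+\setminus\{\alpha_i\}$, and your evaluations of $T(s_i)$ use only $\langle\rho,\check{\alpha}_i\rangle=1$, $\langle\rho,\tilde{\alpha}\rangle=g-1$, and $\|\tilde{\alpha}\|^2=2$, so the argument is valid in all affine types, matching the generality of Macdonald's statement (and of its use later in the paper, where \Cref{def:size_and_quadr} is stated for arbitrary $\wa$).
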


Using this, we can calculate $\size(\waf)$ as follows:
\begin{align*}
 \size(\waf)&=\sum_{\alpha+k\delta\in\mathsf{Inv}(\waf^{-1})}k\\
 &=\left(\sum_{\alpha+k\delta\in\mathsf{Inv}(\waf^{-1})}\alpha+k\delta\right)(0)\\
 &=s(\waf^{-1})(0)\\
 &=(\Psi\circ \waf-\Psi)(0)\\
 &=\frac{g}{2}\left\|\waf(0)-\rhog\right\|^2-\frac{g}{2}\left\|\rhog\right\|^2\\
 &=\frac{g}{2}\left\|\waf(0)\right\|^2-\langle\waf(0),\rho\rangle.
\end{align*}

This suggests the following definition of $\size$ as a quadratic form on $V$. 

\begin{definition}
For any $x\in V$, define
\begin{align*}
\size(x)&:=\frac{g}{2}\left\|x-\rhog\right\|^2-\frac{g}{2}\left\|\rhog\right\|^2\\
&=\frac{g}{2}\left\|x\right\|^2-\langle x,\rho\rangle.
\end{align*}
\label{def:size_and_quadr}
\end{definition}
Note that by the ``strange'' formula (\Cref{thm:identities}), we have $\frac{g}{2}\left\|\rhog\right\|^2=\frac{n(h+1)}{24}$.

The computation directly after~\Cref{lem:quad_form_psi} shows that we have the following analogue of~\Cref{prop:size_in_type_a}.
\begin{corollary}
 The bijection 
 \begin{align*}
  \bijQtoW:\wa/W&\rightarrow\Q\\
  \waf&\mapsto \waf(0)
 \end{align*}
 preserves $\size$.
\end{corollary}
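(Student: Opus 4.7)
My proof plan is to observe that the corollary follows essentially by inspection from the computation carried out in the paragraphs immediately preceding Definition 5.7. Indeed, that computation chain unpacks $\size(\waf)$ through Macdonald's Lemma 5.6 and ends precisely at the expression
\[
\size(\waf) \;=\; \frac{g}{2}\|\waf(0)\|^2 - \langle \waf(0),\rho\rangle.
\]
On the other hand, Definition 5.7 \emph{defines} $\size(x)$ on a point $x \in V$ to be
\[
\size(x) \;=\; \frac{g}{2}\|x\|^2 - \langle x,\rho\rangle.
\]
Substituting $x = \waf(0)$ matches the two formulas on the nose, so $\size(\waf) = \size(\waf(0))$, which is exactly the claim that $\bijQtoW$ preserves $\size$.

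The only thing worth double-checking in this plan is that $\bijQtoW$ is well-defined and that the computation is meaningful on equivalence classes $\wa/W$: if $\waf$ is a minimal-length coset representative and we write $\wa = W \ltimes \Q$ with $\waf = t_\mu w$, then $\waf(0) = t_\mu(w(0)) = t_\mu(0) = \mu \in \Q$, which is the image under $\bijQtoW$ from Proposition 3.3. Moreover, the expression $\frac{g}{2}\|\waf(0)\|^2 - \langle \waf(0),\rho\rangle$ depends only on the coset $\waf W$, because right multiplication of $\waf$ by any $w' \in W$ fixes $\waf(0)$ (since $w'(0) = 0$). So the statistic $\size$ is well-defined on $\wa/W$ and the identification with the quadratic form on $\Q$ is consistent.

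There is no real obstacle here, since the hard content was already absorbed into Macdonald's Lemma 5.6 and the ``strange formula'' constant $\frac{g}{2}\|\rho/g\|^2 = \frac{n(h+1)}{24}$ from Theorem 2.12 (the latter only affects the constant term, which cancels since we evaluate at $\waf(0)$ rather than at $\rho/g$). Thus the corollary is essentially a restatement of the algebraic identity established above, and the proof reduces to citing the computation and invoking the definition of $\size$ on $V$. The only reason this deserves to be stated separately as a corollary is that it shows $\size$ on lattice points is consistent with $\size$ on Weyl group elements, which is the bridge needed to apply Ehrhart-theoretic tools to $\core(\widetilde{W},b) = \Sommers_\Phi(b) \cap \Q$ later in Section 6.
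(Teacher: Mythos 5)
Your proposal is correct and follows the paper's own route: the paper proves this corollary simply by pointing to the computation after Lemma \ref{lem:quad_form_psi}, which yields $\size(\waf)=\frac{g}{2}\|\waf(0)\|^2-\langle\waf(0),\rho\rangle$, and matching this with \Cref{def:size_and_quadr} evaluated at $x=\waf(0)$. Your additional check that the statistic depends only on the coset $\waf W$ is a harmless (and correct) elaboration of the same argument.
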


\subsection{Translating $\size$ from $\Sommers_\Phi(\rat)$ to $\rat\ac$}

To resolve the first obstacle raised in~\Cref{sec:technical_difficulties}---that the orientation of the Sommers region changes with the residue class of $b \mod h$---we wish to transfer the $\size$ statistic from the Sommers region $\Sommers_\Phi(b)$ to the dilated fundamental alcove $\rat\ac$.
Using the results of~\Cref{sec:somtoac}, we define \[\zise(x):=\size(w_\rat^{-1}(x)) \text{ for all } x\in V,\] so that the bijection $w_\rat$ sends $\size$ on $\Sommers_\Phi(b)$ to $\zise$ on $\rat\ac$.

\begin{corollary}
For $b$ coprime to $h$ we have
\[\Big\{\size(\lambda) : \lambda \in \core(\wa,b)\Big\} = \Big\{\sizeshift(\lambda) : \lambda \in \rat\ac \cap \check{Q}\Big\}.\]
\label{cor:size_on_ba}
\end{corollary}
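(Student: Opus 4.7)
The plan is to observe that this corollary is essentially a bookkeeping corollary built out of two already-established facts: the explicit bijection $\waf_{\rat}$ of~\Cref{thm:wf} between $\Sommers_\Phi(b)$ and $b\ac$, and the definition $\zise(x) := \size(\waf_{\rat}^{-1}(x))$ just introduced. So the task reduces to checking that $\waf_{\rat}$ restricts to a bijection of lattice points $\Sommers_\Phi(b) \cap \check{Q} \to b\ac \cap \check{Q}$, and then reading off the equality of value-multisets from the definition of $\zise$.

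First I would recall from~\Cref{thm:wf} that $\waf_{\rat} = t_{\mu} w$ is an element of the affine Weyl group $\wa$, with $\mu \in \check{Q}$ and $w \in W$. Using the semidirect product description $\wa = W \ltimes \check{Q}$ from~\Cref{sec:hyp_and_weyl}, both factors preserve the coroot lattice: $W$ acts on $\check{Q}$ (since the simple reflections $s_{\alpha_i}$ permute the coroots, and $\check{Q}$ is generated by coroots), and translation by $\mu \in \check{Q}$ trivially preserves $\check{Q}$. Thus $\waf_{\rat}(\check{Q}) = \check{Q}$, so restricting the set-theoretic bijection $\Sommers_\Phi(b) \to b\ac$ to the coroot lattice yields a bijection
\[
\waf_{\rat} : \Sommers_\Phi(b) \cap \check{Q} \;\xrightarrow{\;\;\cong\;\;}\; b\ac \cap \check{Q}.
\]

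Next, I would apply the definition $\zise(x) = \size(\waf_{\rat}^{-1}(x))$. For any $\lambda \in \core(\wa,b) = \Sommers_\Phi(b) \cap \check{Q}$, set $\lambda' := \waf_{\rat}(\lambda) \in b\ac \cap \check{Q}$; then
\[
\zise(\lambda') \;=\; \size(\waf_{\rat}^{-1}(\lambda')) \;=\; \size(\lambda).
\]
Since $\lambda \mapsto \lambda'$ is a bijection between the two lattice-point sets and preserves the value of $\size$/$\zise$, we obtain the desired equality of (multi)sets
\[
\big\{\size(\lambda) : \lambda \in \core(\wa,b)\big\} \;=\; \big\{\sizeshift(\lambda') : \lambda' \in b\ac \cap \check{Q}\big\}.
\]

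There is no real obstacle here: the only substantive ingredient is that $\waf_{\rat}$ lies in $\wa$ (rather than in the strictly larger extended affine Weyl group $\wex$), which is precisely the content of~\Cref{thm:wf}. Had $\waf_{\rat}$ only been an element of $\wex$, it would still preserve $\check{\Lambda}$ but not in general $\check{Q}$, and one would need to invoke~\Cref{thm:cycl_action_and_Q} to descend. In particular, this corollary is the clean payoff of having proved~\Cref{thm:wf} inside $\wa$, and it is what allows the subsequent analysis in~\Cref{sec:proofs} to be carried out on the fixed simplex $b\ac$ rather than on the orientation-shifting Sommers region.
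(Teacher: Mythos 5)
Your proposal is correct and matches the paper's (implicit) argument: the corollary follows immediately from \Cref{thm:wf} together with the definition $\zise(x)=\size(\waf_{\rat}^{-1}(x))$, using exactly the observation—made in the paper right after \Cref{thm:wf}—that $\waf_{\rat}\in\wa$ restricts to a bijection on coroot lattice points since its translation part lies in $\check{Q}$ and its linear part lies in $W$. Your remark contrasting $\wa$ with $\wex$ is a nice clarification of why no appeal to \Cref{thm:cycl_action_and_Q} is needed here.
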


\subsection{The simply-laced condition}
\label{sec:simply_laced}

It will be useful to define the $W$-invariant quadratic form $\quadr$ on $V$ by
\begin{align}\label{thm:explicit_quadr}
\quadr(x)&:=\frac{g}{2}\|x\|^2-\frac{g}{2}\left\|\rhog\right\|^2\\
&=\frac{g}{2}\|x\|^2-\frac{n(h+1)}{24}
\end{align}
so that $\size(x)=\quadr(x-\rhog)$ for all $x\in V$.

\medskip

Using the quadratic form $\quadr$, we find a considerable simplification of $\zise$ in the case where $\Phi$ is simply laced.

\begin{theorem}
For $\wa$ simply laced,
\[\zise(x) = \frac{h}{2}\left\|x-\rat\rhoh\right\|^2-\frac{n(h+1)}{24}\]
\label{thm:simple_zise}
\end{theorem}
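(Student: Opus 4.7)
The plan is to unwind the definition $\zise(x) = \size(w_\rat^{-1}(x))$ and apply two features of the simply-laced setting: first, the simplification $g = h$ and $\rho = \check{\rho}$; second, the fact that $w_\rat \in \wa$ is an affine isometry of $V$ that sends $\rhoh$ to $\rat\rhoh$ (Theorem \ref{rhoh}).

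First I would rewrite $\size$ as a completed square. Starting from Definition \ref{def:size_and_quadr}, \[\size(x) = \frac{g}{2}\|x\|^2 - \langle x, \rho\rangle = \frac{g}{2}\left\|x - \frac{\rho}{g}\right\|^2 - \frac{\|\rho\|^2}{2g}.\] In the simply laced case every root is its own coroot, so $\rho = \check{\rho}$ and $g = h$. Combined with the strange formula $\|\rho\|^2/(2g) = n(h+1)/24$ of Theorem \ref{thm:identities}, this gives the identity \[\size(x) = \frac{h}{2}\left\|x - \rhoh\right\|^2 - \frac{n(h+1)}{24}.\]

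Next I would apply $w_\rat^{-1}$. Since $w_\rat\in\wa = W \ltimes \check{Q}$, it is an affine isometry of $V$, i.e.\ $\|w_\rat^{-1}(y) - w_\rat^{-1}(z)\| = \|y - z\|$ for all $y,z\in V$. Taking $y = x$ and $z = \rat\rhoh$, and using Theorem \ref{rhoh} which gives $w_\rat^{-1}(\rat\rhoh) = \rhoh$, we obtain \[\left\|w_\rat^{-1}(x) - \rhoh\right\| = \left\|x - \rat\rhoh\right\|.\] Substituting into the boxed expression for $\size$ above then yields \[\zise(x) = \size\!\left(w_\rat^{-1}(x)\right) = \frac{h}{2}\left\|x - \rat\rhoh\right\|^2 - \frac{n(h+1)}{24},\] as required.

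There is no real obstacle here; the content is purely the simply-laced collapse $g = h$, $\rho = \check{\rho}$ together with the intertwining property of $w_\rat$ supplied by Theorem \ref{rhoh}. What this calculation makes transparent is why the simply-laced hypothesis is essential: without $\rho = \check{\rho}$, the center of the paraboloid defining $\size$ (namely $\rho/g$) and the fixed point of $w_\rat$ (namely $\check{\rho}/h$) do not coincide, so conjugating by the isometry $w_\rat$ no longer produces a clean translate of the same quadratic form.
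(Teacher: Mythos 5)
Your proposal is correct and follows essentially the same route as the paper: both arguments rest on the simply-laced collapse $\rho=\check{\rho}$, $g=h$ (so that the quadratic $\size$ is centered at $\rhoh$ with constant $-\tfrac{n(h+1)}{24}$ by the strange formula) together with $\wf(\rhoh)=\rat\rhoh$ from Theorem~\ref{rhoh}. The only cosmetic difference is that you invoke the fact that $\wf$ acts as an affine isometry directly, whereas the paper encodes the same fact through the explicit factorization $\wf=t_{\rat\rhoh}wt_{-\rhoh}$ and the $W$-invariance of the quadratic form $\quadr$.
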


\begin{proof}
From~\Cref{rhoh,thm:wf} we have that $\wf=t_{\mu}w$ for $\mu\in\Q$ and $w\in W$ such that $\rat\rhoh=w(\rhoh)+\mu$.
We calculate 
\begin{align*}
 \wf&=t_{\mu}w=t_{\rat\rhoh-w(\rhoh)}w=t_{\rat\rhoh}t_{-w(\rhoh)}w=t_{\rat\rhoh}wt_{-\rhoh}.
\end{align*}
We conclude that
\begin{align*}
 \zise(x)&=\size\left(\wf^{-1}(x)\right)\\
 &=\size\left((t_{\rat\rhoh}wt_{-\rhoh})^{-1}(x)\right)\\
 &=\size\left(w^{-1}\left(x-\rat\rhoh\right)+\rhoh\right)\\
 &=\quadr\left(w^{-1}\left(x-\rat\rhoh\right)+\rhoh-\rhog\right).
\end{align*}

  By assumption $\wa$ is simply-laced, so that $\rho=\check{\rho}$ and $g=h$.  Then
\begin{align*}
 \zise(x)&=\quadr\left(w^{-1}\left(x-\rat\rhoh\right)\right)=\quadr\left(x-\rat\rhoh\right),
\end{align*}
since $\quadr$ is $W$-invariant.
\end{proof}

\begin{remark}
The simplification of $\zise$ in~\Cref{thm:simple_zise} is the origin of the simply-laced condition in Theorems~\ref{thm:macdonald_number},~\ref{thm:max_size},~\ref{thm:expected_size}, and~\ref{thm:variance}.
\label{rem:simplification}
\end{remark}

\subsection{$\size$ and Symmetry}

We show that the action of $b\cycl$ on $b\ac$ preserves $\zise$. In particular, this dispenses with the second obstacle from~\Cref{sec:technical_difficulties}, allowing us to study the coweights $b\ac \cap \check{\Lambda}$ instead of the coroots $b\ac\cap \check{Q}$.

\begin{lemma}
The group $b\cycl$ preserves the statistic $\zise$ on $b\ac$.
\label{thm:cycl_action_and_quad}
\end{lemma}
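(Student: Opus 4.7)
The plan is to invoke the simplified form of $\zise$ established in Theorem \ref{thm:simple_zise}---which assumes $\wa$ is simply-laced---together with the fact, proved in Section \ref{sec:symmetry_of_digram}, that $\rhoh$ is fixed by every element of $\cycl$. The point of this combination is that $b\cycl$ acts on $b\ac$ by rigid isometries fixing the single point $b\rhoh$, while the quadratic form $\zise$ is built out of the Euclidean distance from exactly that point.

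Concretely, I would first verify that $b\rhoh$ is a common fixed point of $b\cycl$. Writing an arbitrary element of $\cycl$ as $t_\mu w$, the fixed-point theorem from Section \ref{sec:symmetry_of_digram} gives $w(\rhoh)+\mu=\rhoh$; scaling by $b$ yields $w(b\rhoh)+b\mu=b\rhoh$, which says precisely that $g := t_{b\mu}w \in b\cycl$ fixes $b\rhoh$. Since $g$ is an affine transformation with linear part $w \in W$, it follows immediately that
\[
 g(x) - b\rhoh \;=\; g(x) - g(b\rhoh) \;=\; w\bigl(x - b\rhoh\bigr).
\]

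Theorem \ref{thm:simple_zise} then gives $\zise(x) = \tfrac{h}{2}\|x - b\rhoh\|^2 - \tfrac{n(h+1)}{24}$, so the $W$-invariance of $\|\cdot\|^2$ yields
\[
 \zise(g(x)) \;=\; \tfrac{h}{2}\bigl\|w(x-b\rhoh)\bigr\|^2 - \tfrac{n(h+1)}{24} \;=\; \tfrac{h}{2}\bigl\|x-b\rhoh\bigr\|^2 - \tfrac{n(h+1)}{24} \;=\; \zise(x),
\]
which is the claim. I do not foresee any real obstacle: all the substantive work has already been done in Theorem \ref{thm:simple_zise}, which computes $\zise$ as a translated Euclidean norm in the simply-laced case, and in the earlier theorem that identifies $\rhoh$ as the common fixed point of $\cycl$. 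The lemma is then essentially the two-line verification above, and the only subtle point to flag is that the argument relies on the simply-laced simplification of $\zise$---the same dependence already highlighted in Remark \ref{rem:simplification}.
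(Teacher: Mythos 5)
Your proof is correct and follows essentially the same route as the paper: the paper writes each element of $b\cycl$ as $t_{b\rhoh}wt_{-b\rhoh}$ (equivalently, your observation that $b\rhoh$ is a common fixed point with linear part $w\in W$), applies the simply-laced formula $\zise(x)=\quadr\left(x-b\rhoh\right)$ from \Cref{thm:simple_zise}, and concludes by $W$-invariance of the quadratic form. Your explicit remark about the simply-laced dependence matches the implicit use of \Cref{thm:simple_zise} in the paper's argument.
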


\begin{proof}
For $\waf=t_{b\rhoh}wt_{-b\rhoh}\in b\cycl$ and $x\in b\ac$ we have
\[
 \zise(\waf(x)) = \quadr\left(\waf(x)-b\rhoh\right)=\quadr\left(w\left(x-b\rhoh\right)\right)=\quadr\left(x-b\rhoh\right)=\zise(x).
\]
\end{proof}

\subsection{\Cref{thm:max_size}: Maximum $\size$ of a $(\widetilde{W},b)$-core}

In this section we restate and prove~\Cref{thm:max_size}, establish a connection between $\wf$ and rational $(h,b)$-Dyck paths (\Cref{thm:w_b_inversion_set}), and conjecture that $\wf$ is the largest element in weak order among all dominant elements corresponding to $\core(\widetilde{W},b)$ (\Cref{conj:w_b_is_maximal_in_weak}).

{
\renewcommand{\thetheorem}{\ref{thm:max_size}}
\begin{theorem}[{Maximum $\size$ of a $(\widetilde{W},b)$-core}]
$ $\newline
	For $\widetilde{W}$ a simply-laced affine Weyl group with $\gcd(h,b)=1$, \[\max_{\lambda \in \Sommers_\Phi(b) \cap \check{Q}}(\size(\lambda)) = \frac{n(b^2-1)(h+1)}{24}.\]  This maximum is attained by a unique point $\lambda \in \Sommers_{\widetilde{W}}(b)$.
\end{theorem}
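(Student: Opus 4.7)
My plan is to transfer the problem to the fixed simplex $\rat\ac$ via the rigid motion $\wf$ of~\Cref{thm:wf}. Because $\wf$ restricts to a bijection $\Sommers_{\Phi}(\rat) \cap \Q \to \rat\ac \cap \Q$ intertwining $\size$ with $\zise$, the theorem is equivalent to showing that $0 \in \rat\ac \cap \Q$ is the unique maximizer of $\zise$ on $\rat\ac \cap \Q$, with value $\frac{n(\rat^2-1)(h+1)}{24}$. The simply-laced formula
\[\zise(x) = \frac{h}{2}\left\|x - \rat\rhoh\right\|^2 - \frac{n(h+1)}{24}\]
from~\Cref{thm:simple_zise} lets us evaluate at $x = 0$: using the strange formula (\Cref{thm:identities}) in the form $\|\rho\|^2 = hn(h+1)/12$ (since $g = h$), a one-line calculation gives $\zise(0) = \frac{n(\rat^2-1)(h+1)}{24}$, matching the claimed value.

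It remains to show that $0$ is the unique maximizer on $\rat\ac \cap \Q$. Since $\zise$ is strictly convex, its maximum on $\rat\ac$ is attained only at vertices, and the vertex set of $\rat\ac$ is $\{0\} \cup \{\rat\cw_i/c_i : 1 \leq i \leq n\}$. The group $\rat\cycl$ preserves $\zise$ (\Cref{thm:cycl_action_and_quad}) and acts on the vertex set; its orbit of $0$ is the set of \emph{minuscule} vertices $\{0\} \cup \{\rat\cw_i : c_i = 1\}$, on which $\zise$ is constant equal to $\zise(0)$. Since every prime divisor of a mark $c_i$ also divides $h$, the hypothesis $\gcd(h, \rat) = 1$ gives $\gcd(c_i, \rat) = 1$, so none of the non-minuscule vertices $\rat\cw_i/c_i$ (those with $c_i > 1$) lies in $\cwl$. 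Combined with~\Cref{thm:cycl_action_and_Q}, which provides a unique $\Q$-representative in each $\rat\cycl$-orbit of $\cwl$, this shows that among the vertices attaining the simplex-maximum, only $0$ can lie in $\Q$ --- provided we can rule out the non-minuscule vertices from attaining that maximum.

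The remaining inequality to verify is therefore
\[h\|\cw_i\|^2 < 2 c_i \langle \cw_i, \check{\rho}\rangle \quad\text{for all } i \text{ with } c_i > 1,\]
which is equivalent to $\zise(\rat\cw_i/c_i) < \zise(0)$. Using $\|\cw_i\|^2 = (C^{-1})_{ii}$ and $\langle \cw_i, \check{\rho}\rangle = \sum_j (C^{-1})_{ij}$ in the simply-laced case, this reduces to a finite numerical comparison involving the inverse Cartan matrix and the marks. I expect this to be the main obstacle: in type $\widetilde{A}_n$ there is nothing to check since all $c_i = 1$ and the $\rat\cycl$-action alone settles the claim, but in types $\widetilde{D}_n$, $\widetilde{E}_6$, $\widetilde{E}_7$, $\widetilde{E}_8$ the inequality appears to require case-by-case verification at each non-minuscule node. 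Once this inequality is established, strict convexity forces the maximum on $\rat\ac$ to be exactly $\zise(0)$, attained precisely on the $\rat\cycl$-orbit of $0$ (interior and facet lattice points being strictly smaller), and the unique $\Q$-point of this orbit is $0$. Pulling back along $\wf^{-1}$ then produces the unique $\size$-maximizer in $\Sommers_{\Phi}(\rat) \cap \Q$.
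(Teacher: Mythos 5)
Your proposal is correct and follows the paper's proof in all essentials: transfer to $\rat\ac$ via $\wf$, evaluate $\zise(0)$ with the strange formula (\Cref{thm:identities}), use strict convexity of $\zise$ to push the maximum to the vertices of $\rat\ac$, and conclude uniqueness from the fact that $0$ is the only vertex lying in $\Q$ --- a fact the paper merely asserts, so your justification via $b\cycl$ (\Cref{thm:cycl_action_and_Q}) and the observation that primes dividing the marks $c_i$ divide $h$ is a welcome addition. The one place you diverge is the vertex comparison, which you split into the minuscule orbit of $0$ (handled by $b\cycl$-invariance, \Cref{thm:cycl_action_and_quad}) and the non-minuscule vertices, for which you leave the inequality $h\|\cw_i\|^2< 2c_i\langle\cw_i,\check{\rho}\rangle$ as an anticipated case-by-case computation with inverse Cartan matrices; note that only the weak inequality is actually needed, since those vertices are not in $\Q$. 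The paper dispatches this step in one stroke using \cite[Proposition 7.3]{macdonald1971affine}: the $i$th vertex satisfies $x_i-\rhoh=-\frac{\rho_i}{h}$, where $\rho_i$ is the half-sum of positive roots of the rank-$n$ subsystem $\Phi_i$ obtained by deleting node $i$ from the affine diagram, so the comparison becomes $\|\rho_i\|^2\leq\|\rho\|^2$ for all $i$ --- still verified case-by-case, but uniformly phrased, and it absorbs your minuscule-orbit argument as the equality cases ($\Phi_i\cong\Phi$ when $c_i=1$). So there is no gap in substance: the check you flag as the main obstacle is true and finite, and Macdonald's proposition is the clean way to organize it.
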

\addtocounter{theorem}{-1}
}

\begin{proof}
We claim that the maximum is obtained at $\lambda = \wf^{-1}(0)$.  First note that since $\wf$ maps $\Sommers_{\Phi}(\rat)$ bijectively to $b\ac$, $\wf^{-1}(0)$ is indeed in $\core(\wa,\rat)=\Sommers_{\Phi}(\rat)\cap\Q$.
  Since $\wf$ maps $\size$ to $\zise$ (\Cref{cor:size_on_ba}), we will show the equivalent statement that $0$ is the unique element of $\rat\ac\cap\Q$ of maximum $\zise$.
  We have that
  \begin{align*}
   \zise(x)&=\quadr\left(x-\rat\rhoh\right)\\
   &=\frac{h}{2}\left\|x-\rat\rhoh\right\|^2-\frac{n(h+1)}{24}
  \end{align*}
  is a strictly convex function in $x$, and so it can only be maximized at a vertex of the convex polytope $\rat\ac$. We will show that among all the vertices of $\rat\ac$, the vertex $0$ has maximal $\zise$.
  Together with the fact that $0$ is the only vertex of $\rat\ac$ that is in the coroot lattice $\Q$ this implies the result.\\
  \\
  Let $x_0,x_1,\ldots,x_n$ be the vertices of $\ac$, where $x_0=0$ and $x_i$ is the vertex with $\langle x_i,\alpha_i\rangle>0$ for $i\in[n]$.
  So $\rat x_0,\rat x_1,\ldots,\rat x_n$ are the vertices of $\rat\ac$.
  We wish to show that $\|\rat x_i-\rat\rhoh\|^2$ is maximal for $i=0$. For this it is sufficient to show that $\|x_i-\rhoh\|^2$ is maximal for $i=0$.\\
  \\
  Define $\alpha_0=-\amax$ and for any $i=0,1,\ldots,n$ let $\Phi_{i}$ be the root system whose set of simple roots is $\{\alpha_0,\alpha_1,\ldots,\alpha_n\}\backslash\{\alpha_i\}$.
  Define $\rho_i=\frac{1}{2}\sum_{\alpha\in\Phi_i^+}\alpha$.
  Then by~\cite[Proposition 7.3]{macdonald1971affine} (using that $\rho=\check{\rho}$ and $g=h$) we have $x_i-\rhoh=-\frac{\rho_i}{h}$ for all $i\in\{0,1,\ldots,n\}$.
  So we just need to check \emph{case-by-case} that $\|\rho_i\|^2$ is maximized when $i=0$. This is easily accomplished.\\
\\
We explictly compute this maximum: 
\begin{align*}
\size(\wf^{-1}(0))&=\zise(0)=Q\left(-\frac{b\rho}{h}\right)=\frac{g}{2}\left\|\rat\rhoh\right\|^2-\frac{g}{2}\left\|\rhoh\right\|^2\\
&=(b^2-1)\frac{g}{2}\left\| \rhoh \right\|^2
=(b^2-1)\frac{g^2n(h+1)}{24h^2}\\
&=\frac{n(b^2-1)(h+1)}{24}.
\end{align*}
\end{proof}

\medskip

We now characterize the inversion set of the affine element $\wf$.


\begin{theorem}
For $\wa$ a simply-laced affine Weyl group $b$ relatively prime to $h$ and $\wf$ as in~\Cref{thm:wf},
\[\inv(\wf) = \left\{-\alpha+k\delta:\alpha\in\Phi^+,0<k<\frac{b}{h}\h(\alpha)\right\}.\]
\label{thm:w_b_inversion_set}
\end{theorem}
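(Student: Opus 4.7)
The plan is to apply Theorem~\ref{inv} directly, which says $\alpha+k\delta\in\inv(\wf)$ if and only if the hyperplane $H_{\alpha}^{-k}$ separates $\wf\ac^\circ$ from $\ac^\circ$. The whole argument will reduce to a one-dimensional calculation once we exhibit a specific interior point of each alcove. From Theorem~\ref{rhoh} we know $\wf(\check\rho/h)=b\check\rho/h$, so $b\check\rho/h$ lies in $\wf\ac^\circ$, while $\check\rho/h$ itself lies in $\ac^\circ$ by Lemma~\ref{uniquerhoh}.

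First I would verify that neither $\check\rho/h$ nor $b\check\rho/h$ lies on any affine hyperplane, so that both actually lie in the \emph{open} alcoves $\ac^\circ$ and $\wf\ac^\circ$: for $\alpha\in\Phi^+$ one has $\langle \check\rho/h,\alpha\rangle=\h(\alpha)/h\in(0,1)$ because $1\leq \h(\alpha)\leq h-1$, and $\langle b\check\rho/h,\alpha\rangle=b\h(\alpha)/h$ is not an integer since $\gcd(h,b)=1$ and $h\nmid \h(\alpha)$. Consequently the set of affine hyperplanes separating $\ac^\circ$ from $\wf\ac^\circ$ coincides with the set of affine hyperplanes strictly separating the two points $\check\rho/h$ and $b\check\rho/h$.

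Next I would enumerate the separating hyperplanes. Fix $\alpha\in\Phi^+$ (the case $\alpha\in-\Phi^+$ produces the same hyperplanes). The hyperplane $H_\alpha^m$ strictly separates $\check\rho/h$ from $b\check\rho/h$ iff $m$ lies strictly between $\h(\alpha)/h$ and $b\h(\alpha)/h$. Since $\h(\alpha)/h<1\leq m$ on the left and $b\h(\alpha)/h\notin\Z$ on the right, the integers satisfying this are precisely $0<m<b\h(\alpha)/h$. Finally I would translate back to the language of affine roots: $H_\alpha^m=H_{-\alpha}^{-m}$, so the corresponding inversion via Theorem~\ref{inv} is $-\alpha+m\delta$, which is indeed in $\ar^+$ because $-\alpha\in-\Phi^+$ and $m>0$. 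Collecting these over all $\alpha\in\Phi^+$ yields exactly the set claimed.

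There is essentially no hard step here; the work is bookkeeping. The only places where care is required are the sign convention matching $\alpha+k\delta\leftrightarrow H_\alpha^{-k}$ in Theorem~\ref{inv}, and the coprimality condition $\gcd(h,b)=1$ which simultaneously guarantees that $\wf$ is well-defined (Theorem~\ref{rhoh}) and that the boundary case $b\h(\alpha)/h\in\Z$ never occurs—so the inequality in the description of $\inv(\wf)$ is automatically strict. I would not expect to use the simply-laced hypothesis in this particular computation; it enters the companion results (e.g.\ Theorem~\ref{thm:simple_zise}) but not the combinatorial description of $\inv(\wf)$.
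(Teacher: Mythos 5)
Your proposal is correct and follows essentially the same route as the paper: the paper's (much terser) proof likewise combines Theorem~\ref{rhoh} ($\wf(\check\rho/h)=b\check\rho/h$) with Theorem~\ref{inv} and the computation $\langle b\check\rho/h,\alpha\rangle=\tfrac{b}{h}\h(\alpha)$, leaving the separating-hyperplane bookkeeping implicit where you spell it out. Your observation that the simply-laced hypothesis is not needed for this particular statement is also consistent with the paper's argument.
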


\begin{proof}
From~\Cref{rhoh}, we know that $\waf_{\rat}\left(\rhoh\right)=\rat\rhoh.$  Then the result follows from calculating $\left\langle \rat \rhoh, \alpha \right\rangle=\frac{b}{h}\h(\alpha)$ for $\alpha \in \Phi^+$.
\end{proof}

Let $\gcd(h,b)=1$.  If we draw a line of rational slope in $\mathbb{R}^2$ from the point $(0,0)$ to the point $(h,b)$, then for $1 \leq i \leq b-1$, the number of boxes with $y$-coordinate equal to $i$ is given by the sequence \[
\left\{\left\lfloor \frac{i h}{b} \right\rfloor\right\}_{i=1}^{b-1}.\]  By~\Cref{thm:w_b_inversion_set}, this sequence characterizes the inversion set of $\wf$.

Summing the inversions of~\Cref{thm:w_b_inversion_set} rank-by-rank gives the following corollary.

\begin{corollary}
	For $\wa$ a simply-laced affine Weyl group with $\gcd(h,b)=1$,
\[		\sum_{i=1}^{b-1} (b-i) \sum_{j=1}^{\lfloor \frac{i h}{b} \rfloor} \left|\Phi_{h-j} \right| = \frac{n(b^2 - 1)(h + 1)}{24},\]
where $\Phi_{\geq i}$ is the set of positive roots of height greater than or equal to $h-i$.
\label{cor:max_size_as_sum}
\end{corollary}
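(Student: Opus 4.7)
The plan is to recognize the left-hand side as $\size(\wf^{-1})$ and then invoke \Cref{thm:max_size}. First, I would swap the order of summation in
\[S := \sum_{i=1}^{b-1} (b-i) \sum_{j=1}^{\lfloor ih/b\rfloor} |\Phi_{h-j}|.\]
A positive root $\alpha$ with $\h(\alpha) = h-j$ enters the inner sum precisely when $\lfloor ih/b\rfloor \geq j$, equivalently $i \geq \lceil jb/h\rceil$. Since $\gcd(b,h) = 1$ and $1 \leq j \leq h-1$, the number $jb/h$ is never an integer, so $\lceil jb/h\rceil = \lfloor jb/h\rfloor + 1$. A short triangular-number computation then gives
\[S = \sum_{\alpha \in \Phi^+} \frac{\lfloor b\h(\alpha)/h\rfloor\,(\lfloor b\h(\alpha)/h\rfloor + 1)}{2}.\]

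Next, I would read this expression directly off the inversion set of $\wf$ described in \Cref{thm:w_b_inversion_set}: for each $\alpha \in \Phi^+$, the affine roots of the form $-\alpha + k\delta$ lying in $\inv(\wf)$ are exactly those with $k = 1, 2, \ldots, \lfloor b\h(\alpha)/h\rfloor$, and together they contribute $1 + 2 + \cdots + \lfloor b\h(\alpha)/h\rfloor$ to the total sum of heights. Applying \Cref{def:size_on_elements} to $\wf^{-1}$ then yields
\[S = \sum_{\alpha+k\delta \in \inv(\wf)} k = \size(\wf^{-1}).\]

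Finally, I would close the loop using the identity $\size(\waf) = \size(\waf(0))$, which holds for every $\waf \in \wa$ and is exactly the chain of equalities displayed immediately before \Cref{def:size_and_quadr}. Taking $\waf = \wf^{-1}$ and combining with \Cref{thm:max_size} gives
\[S = \size(\wf^{-1}(0)) = \frac{n(b^2-1)(h+1)}{24},\]
as required. The one delicate step is the order-swap in the first paragraph: the hypothesis $\gcd(b,h)=1$ is used both to express the ceiling as $\lfloor jb/h\rfloor + 1$ and to guarantee that the strict upper bound $k < b\h(\alpha)/h$ in \Cref{thm:w_b_inversion_set} has exactly $\lfloor b\h(\alpha)/h\rfloor$ positive integer solutions. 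Beyond this bookkeeping, the argument is a direct translation and no further input is needed.
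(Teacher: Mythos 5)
Your proposal is correct and is essentially the paper's argument: the paper obtains the identity by summing the $\delta$-coefficients over $\inv(\wf)$ (as described in \Cref{thm:w_b_inversion_set}) rank-by-rank and identifying the total with $\size(\wf^{-1})=\size(\wf^{-1}(0))=\frac{n(b^2-1)(h+1)}{24}$ from \Cref{thm:max_size}. Your only cosmetic difference is swapping the order of summation so the bookkeeping is done root-by-root rather than level-by-level, and your handling of the floor/ceiling details under $\gcd(h,b)=1$ is accurate.
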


Since we can easily write down explicit formulas for $\left|\Phi_{h-j} \right|$, we obtain apparently nontrivial identities involving the floor function.  For example, in type $\widetilde{A}_n$ with $\gcd(n+1,b)=1$, we obtain the equality 

\[		\sum_{i=1}^{b-1} \frac{b-i}{2} \left\lfloor \frac{i (n+1)}{b} \right \rfloor \left(1+ \left\lfloor \frac{i (n+1)}{b} 
\right\rfloor\right) = \frac{n(b^2 - 1)(n + 2)}{24}.\]

In type $\widetilde{D}_n$ with $\gcd(2n-2,b)=1$, we have

\[ \sum_{i=1}^{b-1} (b-i) \left( \sum_{j=1}^{\min\left(\lfloor \frac{i (2n-2)}{b} \rfloor , n-2\right)} \left\lfloor \frac{j+1}{2} \right\rfloor + \sum_{j=n-2}^{\left\lfloor \frac{i (2n-2)}{b} \right\rfloor-1} \left\lceil \frac{j+3}{2} \right\rceil \right) = \frac{n(b^2-1)(2n-1)}{24}.\]

We challenge the reader to prove these equalities directly!

\medskip

Although~\Cref{thm:max_size} proves that $\size(\wf^{-1}(0))$ is the maximum that the statistic $\size$ can take on $\core(\wa,b)$, we believe that the inversion set of $w_b$---specified in~\Cref{thm:w_b_inversion_set}---contains the inversion sets of all other affine elements corresponding to elements of $\core(\wa,b)$.  This conjecture generalizes J.~Vandehey's result that the largest $(a,b)$-core contains all other $(a,b)$-cores as subdiagrams (see~\cite{vandehey2008general,fayers2011t}).

\begin{conjecture}
The element $\wf$ is maximal in the weak order on $\wa/W$ among all dominant elements $\{ \waf \in \wa/W : \waf^{-1}(0) \in \Sommers_\Phi(b)\}$.
\label{conj:w_b_is_maximal_in_weak}
\end{conjecture}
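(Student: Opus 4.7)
The plan is to reduce the conjecture to showing $\inv(\waf) \subseteq \inv(\wf)$ for each $\waf$ in the prescribed set, using that weak order on $\wa$ is characterized by inclusion of inversion sets. By~\Cref{thm:w_b_inversion_set},
\[\inv(\wf) = \{-\alpha + k\delta : \alpha \in \Phi^+,\ 0 < k < \tfrac{b}{h}\h(\alpha)\},\]
so there are two things to verify: that every inversion of $\waf$ has the shape $-\alpha + k\delta$ with $\alpha \in \Phi^+$ and $k \geq 1$, and that moreover $k < \tfrac{b}{h}\h(\alpha)$.

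For the shape constraint, I would use that $\waf^{-1}\ac^\circ$ is a dominant alcove. Any separating affine hyperplane $H_{\beta}^{-k}$ between $\ac^\circ$ and $\waf^{-1}\ac^\circ$ must meet the interior of $C$; combined with the bounds $0 < \langle x, \alpha\rangle < 1$ on $\ac^\circ$ for $\alpha \in \Phi^+$, a short case analysis forces $\beta = -\alpha$ with $\alpha \in \Phi^+$ and $k \geq 1$ (the same dichotomy used implicitly in the proof of~\Cref{thm:w_b_inversion_set}). For the height bound, I would exploit the assumption $\waf^{-1}(0) \in \Sommers_\Phi(b)$ via the rigid motion $\wf$ from~\Cref{thm:wf}: applying $\wf$ gives $(\wf \waf^{-1})(0) \in b\ac$, so the composed element $\waf'' := \wf\waf^{-1}$ sends $0$ into the cleaner polytope $b\ac$. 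The strategy is then to analyze $\waf''$ inside $b\ac$, where the inversion geometry is governed by the single linear constraint $\langle x, \tilde{\alpha}\rangle \leq b$, and to transfer the resulting bounds back to $\waf$ using $\waf = \waf'' \wf$ together with the explicit description of $\inv(\wf)$.

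The main obstacle is making the transfer uniform in the Weyl part $w$ of $\waf = w t_\mu$: whereas the Sommers-region inequalities directly control only the pairings $\langle -\mu, \gamma\rangle$ for $\gamma$ of height $r$ or $h-r$ (where $b=th+r$), the inversions $-\alpha+k\delta$ range over roots $\alpha$ of every height in $\Phi^+$. I expect the right uniform input will be the interpretation of $\inv(\wf)$ as the set of affine hyperplanes crossed on the straight-line dilation path from $\rhoh$ to $b\rhoh$ (visible from~\Cref{thm:w_b_inversion_set} since $H_\alpha^k$ is crossed at parameter $t = kh/\h(\alpha)$), combined with an interpolation argument along the segment from $-\mu$ to $\rhoh$ inside $\Sommers_\Phi(b)$. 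In type $\widetilde{A}_n$ the conjecture reduces to the containment of Young diagrams proved by J.~Vandehey~\cite{vandehey2008general}, providing both a sanity check and a combinatorial template for the general argument.
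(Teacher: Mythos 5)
The statement you are trying to prove is stated in the paper as \Cref{conj:w_b_is_maximal_in_weak}, i.e.\ as an open conjecture: the paper offers no proof of it (only the type-$\widetilde{A}$ case is known, via J.~Vandehey's containment theorem, which the authors cite as motivation). So there is no paper argument to compare yours against; what you have written would, if completed, be new work. As it stands, however, it is an outline with the decisive step missing. Your reduction is fine and matches the paper's own framing: maximality in weak order amounts to $\inv(\waf)\subseteq\inv(\wf)$, the inversions of a dominant element are all of the form $-\alpha+k\delta$ with $\alpha\in\Phi^+$ and $k\geq 1$, and \Cref{thm:w_b_inversion_set} identifies $\inv(\wf)$ as those with $0<k<\frac{b}{h}\h(\alpha)$. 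But the entire content of the conjecture is then the height bound $k<\frac{b}{h}\h(\alpha)$ for every inversion of every dominant $\waf$ with $\waf^{-1}(0)\in\Sommers_\Phi(b)$, and your proposal does not establish it: you name it as ``the main obstacle'' and say you ``expect the right uniform input'' to be a dilation-path/interpolation argument, which is a heuristic, not a proof.

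Two concrete reasons the sketched transfer does not yet work. First, the hypothesis is a condition on a single lattice point: $\waf^{-1}(0)\in\Sommers_\Phi(b)$ gives only the finitely many inequalities $\langle \waf^{-1}(0),\gamma\rangle\geq -t$ for $\gamma\in\Phi_r$ and $\leq t+1$ for $\gamma\in\Phi_{h-r}$ (with $b=th+r$), whereas the desired conclusion bounds $k$ against $\frac{b}{h}\h(\alpha)$ for positive roots $\alpha$ of \emph{every} height, and the relation between these also involves the finite Weyl part $w$ of $\waf=wt_\mu$; nothing in the proposal bridges that gap uniformly. Second, the passage through $\waf''=\wf\waf^{-1}$ is not as clean as claimed: knowing $\waf''(0)\in b\ac$ does not control $\inv(\waf)$, because inversion sets are not additive under composition (there can be cancellation between $\inv(\waf'')$ and $\waf''\bigl(\inv(\wf)\bigr)$), and $b\ac$ is cut out by $n+1$ inequalities, not by the single constraint $\langle x,\tilde{\alpha}\rangle\leq b$. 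Until you either (a) prove directly that for dominant $\waf$ with $\waf^{-1}(0)\in\Sommers_\Phi(b)$ one has $\langle \waf(0)\text{-type data},\alpha\rangle$ bounds strong enough to force $k<\frac{b}{h}\h(\alpha)$ for all separating hyperplanes $H_\alpha^k$, or (b) make the interpolation-along-a-segment idea into an actual convexity argument (showing every hyperplane separating $\waf\ac^\circ$ from $\ac^\circ$ also separates $b\rhoh$ from $\rhoh$), the conjecture remains open, and your write-up should present this as a proposed strategy rather than a proof.
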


\section{Calculations}
\label{sec:proofs}

We begin with a review of \emph{weighted} Ehrhart theory, which extends the quasipolynomiality and reciprocity theorems of usual Ehrhart theory to weighted sums over lattice points in a rational polytope (\Cref{sec:weighted_ehrhart}).  In~\Cref{sec:outline_of_calculations}, we outline the calculations we will perform, pulling together the theory from the previous parts of the paper.  We work out these calculations by hand in~\Cref{sec:type_a_variance,sec:type_d_expected} to find the variance in type $\widetilde{A}_n$ and the expected value in type $\widetilde{D}_n$.  In~\Cref{sec:automation}, we detail our methodology for automating these computations, which allows us to compute the third moment in type $\widetilde{A}_n$ and the variance in type $\widetilde{D}_n$.  In~\Cref{sec:e_series}, we explain how to verify~\Cref{thm:expected_size} in types $\widetilde{E}_6$, $\widetilde{E}_7$, and $\widetilde{E}_8$ using the freely available program $\mathsf{Normaliz}$  \cite{bruns2001normaliz,bruns2010normaliz}.

\subsection{Weighted Ehrhart Theory}
\label{sec:weighted_ehrhart}

    Fix $\mathcal{P}$ a $n$-dimensional rational convex polytope in a lattice $L$ (with generators a basis of $\mathbb{R}^n$), and let $h:\mathbb{R}^n \to \mathbb{R}$ be a polynomial of degree $r$.  The \defn{weighted lattice-point enumerator} for the $b$th dilate of $\mathcal{P}$ is \[\mathcal{P}^L_h(b):=\sum_{x \in b\mathcal{P} \cap L} h(x).\]  It turns out that $\mathcal{P}^L_h(b)$ is not only a quasipolynomial, but also satifies a reciprocity relation.\footnote{Even though these results are well-known to the experts, it is difficult to find explicit statements in the literature that apply at once to \emph{rational} polytopes and \emph{weighted} lattice-point enumerators; see the remark in~\cite[Section 1.2.2]{johnson2015lattice}.}

\begin{theorem}[{\cite{barvinok2006computing,berline2007local,baldoni2012computation},~\cite[Theorem 4.6]{ardila2014double}}]
	\label{thm:generized_ehrhart}
	For $\mathcal{P}$, $L$, and $h$ as above,
\begin{enumerate}
\item $\mathcal{P}^L_h(b)$ is a quasipolynomial in $b$ of degree $n+r$.  Its period divides the least common multiple of the denominators of the coordinates (in the generators of $L$) of the vertices of $\mathcal{P}$.
\item If $\mathcal{P}^o$ is the interior of $\mathcal{P}$, then \[\mathcal{P}^L_h(-b) = (-1)^n (\mathcal{P}^o)^L_h(b).\]
\end{enumerate}
\end{theorem}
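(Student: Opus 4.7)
The plan is to reduce to monomial weights and then extract both claims from a generating-function identity of Brion. By linearity of $\mathcal{P}^L_h(b)$ in $h$, it suffices to prove both statements when $h(x) = x^\alpha$ is a monomial of degree $r = |\alpha|$. In that case I would recover $\mathcal{P}^L_h(b)$ from the integer-point transform $F_{b\mathcal{P}}(t) := \sum_{x \in b\mathcal{P}\cap L} e^{\langle t,x\rangle}$ via $\mathcal{P}^L_h(b) = (\partial^\alpha F_{b\mathcal{P}})(0)$, so everything reduces to controlling how $F_{b\mathcal{P}}(t)$ depends on $b$.

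First I would triangulate $\mathcal{P}$ into rational simplices whose vertices are drawn from the vertices of $\mathcal{P}$, and then pass to a half-open decomposition so that contributions from shared faces cancel correctly. On a single simplex $\sigma$ with vertices $v_0,\dots,v_n$, Brion's theorem gives
\[
F_{b\sigma}(t) \;=\; \sum_{i=0}^{n} e^{b\langle t,v_i\rangle}\,\frac{N_i(b;t)}{\prod_{j}\bigl(1 - e^{\langle t, u_{ij}\rangle}\bigr)},
\]
where the primitive edge vectors $u_{ij}$ of the tangent cone at $v_i$ are independent of $b$ and $N_i(b;t)$ enumerates lattice points in a fundamental parallelepiped of the cone translated to basepoint $bv_i$. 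Because that parallelepiped depends on $bv_i$ only modulo $L$, each $N_i(b;t)$ is quasipolynomial in $b$ with period dividing the lcm of the denominators of the coordinates of the $v_i$. Differentiating $r$ times in $t$ and evaluating at $t=0$, each factor $e^{b\langle t,v_i\rangle}$ contributes a polynomial in $b$ of degree at most $r$, and the rational factors, after cancellation of their poles at $t=0$, contribute polynomials in $b$ of degree at most $n$; summing produces the claimed quasipolynomial of degree $n+r$ with the stated period. The genuine technical obstacle is the control of pole cancellations at $t=0$, which is precisely what the Berline--Vergne local Euler--Maclaurin formulas (or equivalently Barvinok's short rational generating functions) are designed to handle, and I would invoke those references rather than redo the argument.

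For reciprocity (2), I would prove the rational-function identity
\[
F_{b\mathcal{P}}(-t) \;=\; (-1)^n\, F_{(b\mathcal{P})^\circ}(t),
\]
from which the theorem follows by applying $\partial^\alpha|_{t=0}$ and simultaneously replacing $b$ by $-b$ on the left. This identity reduces via the triangulation and half-open decomposition to the corresponding statement for a single rational simplex, which is in turn Stanley's reciprocity for pointed rational cones applied term-by-term to Brion's formula: for each tangent cone $C$ one has $F_C(-t) = (-1)^{\dim C} F_{C^\circ}(t)$ as rational functions, and the half-open assembly of these identities matches interior and boundary lattice points correctly. The delicate point, as in (1), is verifying that the pole cancellations at $t=0$ survive the substitutions $t\mapsto -t$ and $b\mapsto -b$ with the correct overall sign; this is where the Berline--Vergne local formulas do the real work, and I expect this pole bookkeeping to be the main obstacle of the proof.
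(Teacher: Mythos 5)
The first thing to note is that the paper does not prove this statement at all: it is quoted from the literature (Barvinok, Berline--Vergne, Baldoni et al., Ardila--Brugall\'e), with a footnote acknowledging only that an explicit reference covering rational polytopes with polynomial weights is hard to locate. So your attempt cannot be matched against a proof in the paper; it has to stand on its own, and for part (1) your sketch is indeed the standard route taken in those references: reduce to monomial weights, apply Brion's decomposition into vertex tangent cones, isolate the $b$-dependence in the factors $e^{b\langle t,v_i\rangle}$ and in the fundamental-parallelepiped numerators (which depend only on $b v_i$ modulo $L$, whence the period statement), and control the Laurent expansion at $t=0$. The degree bound $n+r$ is asserted rather than derived, and the pole bookkeeping is deferred to the same references the paper cites, which is acceptable at the level of a sketch.

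Part (2), however, contains a genuine error. The identity you propose to prove, $F_{b\mathcal{P}}(-t)=(-1)^n F_{(b\mathcal{P})^\circ}(t)$, is false: for a fixed dilation $b$ both sides are finite sums of exponentials, and already for $\mathcal{P}=[0,1]\subset\mathbb{R}$, $L=\mathbb{Z}$, $b=1$ the left side is $1+e^{-t}$ while the right side is $0$. The reason term-by-term Stanley reciprocity does not assemble the way you claim is that the tangent cone of $b\mathcal{P}$ at the vertex $bv_i$ has apex $bv_i$, and substituting $t\mapsto -t$ in its integer-point transform produces the factor $e^{-b\langle t,v_i\rangle}$; so reciprocity for that term naturally involves a cone with apex $-bv_i$, not the interior of the cone at $bv_i$. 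Relatedly, the phrase ``replacing $b$ by $-b$ on the left'' has no meaning for the function $F_{b\mathcal{P}}(-t)$ itself: Ehrhart--Macdonald reciprocity is a statement about the quasipolynomial constituents, and one may only substitute $-b$ after the dependence on $b$ has been isolated as an exponential-polynomial expression valid on a fixed residue class modulo the period (and one must then keep track that the constituent being evaluated at $-b$ is the one attached to the residue class of $-b$). The correct repair is either to carry out exactly that constituent-by-constituent analysis on Brion's formula, matching the parallelepiped numerators at apex $-bv_i$ with the interior parallelepiped points at apex $bv_i$ via cone reciprocity $\sigma_{C}(e^{-t})=(-1)^{n}\sigma_{C^{\circ}}(e^{t})$, or, more cleanly, to homogenize: cone over $\mathcal{P}$ in $\mathbb{R}^{n+1}$ with the dilation parameter as the grading, apply Stanley reciprocity to that single pointed rational cone, and then differentiate in the nonhomogeneous variables to pass from the unweighted to the weighted statement. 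As written, your reciprocity argument would not survive the counterexample above, so this step needs to be redone.
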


When $h(x)=1$, we have $\mathcal{P}^L_h(b)=\left | b\mathcal{P} \cap L\right|$ and we therefore recover the well-known theorems of E.~Ehrhart and I.~G.~MacDonald.  We refer the reader to~\cite{beck2007computing} for further information and definitions.  We will use the notation $\mathcal{P}^L_h(b)_i$ to refer to the $i$th component of the quasipolynomial $\mathcal{P}^L_h(b)$.

\subsection{Outline of Calculations}  
\label{sec:outline_of_calculations}

Drawing heavily from P.~Johnson's proof in~\cite{johnson2015lattice} of the expected size of a simultaneous core in type $\widetilde{A}_n$, we outline our methodology.

In~\Cref{thm:explicit_quadr}, we proved that the statistic $\size$ is a quadratic form.  \Cref{cor:size_on_ba} then transfers $\size$ to a statistic $\sizeshift$ on $b\ac \cap \check{\Lambda}$, resolving the first obstacle outlined in~\Cref{sec:technical_difficulties} (that our desired region was changing orientation as we changed the dilation factor). 

The following proposition identifies $\ac$ as a rational polytope, so that the theory in~\Cref{sec:weighted_ehrhart} resolves the second obstacle in~\Cref{sec:technical_difficulties} (that $\Sommers_\Phi(b)$ is not an integer polytope).

\begin{proposition}
	The polytope $\ac$ is rational in the coweight lattice $\check{\Lambda}$.
\label{prop:barational}
\end{proposition}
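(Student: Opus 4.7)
The plan is to exhibit the vertices of $\ac$ explicitly and observe that each has rational coordinates in the basis of fundamental coweights, which is a $\mathbb{Z}$-basis for $\cwl$.

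Recall that $\ac$ is cut out by the $n+1$ affine inequalities $\langle x,\alpha_i\rangle\geq 0$ for $i=1,\ldots,n$ and $\langle x,\tilde{\alpha}\rangle\leq 1$. Since $\ac$ is an $n$-simplex, each of its $n+1$ vertices is obtained by turning all but one of these inequalities into an equality. First I would handle the vertex $x_0$ obtained by imposing $\langle x_0,\alpha_i\rangle = 0$ for all $i=1,\ldots,n$; since $\{\alpha_1,\ldots,\alpha_n\}$ is a basis of $V$ this forces $x_0=0$, which trivially lies in $\cwl$.

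Next I would determine, for each $i\in\{1,\ldots,n\}$, the vertex $x_i$ characterized by $\langle x_i,\alpha_j\rangle = 0$ for $j\neq i$ together with $\langle x_i,\tilde{\alpha}\rangle = 1$. Writing $\tilde{\alpha}=\sum_{j=1}^n c_j\alpha_j$ and using that $\{\check{\omega}_1,\ldots,\check{\omega}_n\}$ is the basis of $V$ dual to $\{\alpha_1,\ldots,\alpha_n\}$, the first set of conditions forces $x_i$ to be a scalar multiple of $\check{\omega}_i$; then $\langle x_i,\tilde{\alpha}\rangle = 1$ pins down
\[
x_i \;=\; \frac{1}{c_i}\,\check{\omega}_i.
\]
Since $\check{\omega}_i\in\cwl$ and $c_i\in\mathbb{Z}_{>0}$, the coordinates of $x_i$ with respect to the $\mathbb{Z}$-basis $(\check{\omega}_1,\ldots,\check{\omega}_n)$ of $\cwl$ are rational, so $x_i$ is a rational point of $\cwl$.

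Having exhibited all $n+1$ vertices as rational points in $\cwl$, the rationality of $\ac$ follows by definition. There is no real obstacle here; the only thing to be careful of is making sure the dual-basis identification is applied correctly so that $c_i$ (rather than some other mark) appears in the denominator. As a side remark, the proof also gives the period bound in \Cref{thm:generized_ehrhart}: the period of any Ehrhart quasipolynomial attached to $\ac$ with respect to $\cwl$ divides $\operatorname{lcm}(c_1,\ldots,c_n)$, which in particular divides the Coxeter number $h$.
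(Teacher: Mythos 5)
Your argument is correct and is essentially the paper's proof: the paper simply asserts that the vertex set of $\ac$ is $\{0\}\cup\{\check{\omega}_i/c_i : 1\leq i\leq n\}$ and notes these are rational in $\cwl$, while you additionally derive the vertices from the defining inequalities (correctly, since $\langle\check{\omega}_i,\tilde{\alpha}\rangle=c_i$). One caution about your closing side remark: while the period bound $\lcm(c_1,\ldots,c_n)$ is right, it does \emph{not} in general divide the Coxeter number $h$ --- in type $\widetilde{E}_7$ one has $\lcm(c_i)=12$ and $h=18$, and in type $\widetilde{E}_8$ one has $\lcm(c_i)=60$ and $h=30$ --- though this does not affect the proof of the proposition itself.
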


\begin{proof}
	The vertices of $\ac$ are given by the set \[\Gamma:=\{0\}\cup\{\frac{\cw_i}{c_i} : 1 \leq i \leq n\},\] where we remind the reader that the $\cw_i$ are the fundamental coweights and $\tilde{\alpha}$ is the highest root.  These vertices are rational in the lattice $\check{\Lambda}$.
\end{proof}

Part (1) of \Cref{thm:generized_ehrhart} now allows us to conclude that the weighted lattice-point enumerator \[\ac^{\check{\Lambda}}_{\sizeshift^k}(b) = \sum_{\mu \in b \ac \cap \check{\Lambda}} \sizeshift^k(x) = \sum_{\mu \in \Sommers_\Phi(b) \cap \check{\Lambda}} \size^k(x)\] is a quasipolynomial of of degree $n+2k$ of period $m(\widetilde{W}):=\lcm(c_1,\ldots,c_n)$, where the $c_i$ are the coefficients of the simple roots in the highest root---by~\Cref{prop:barational}, the $c_i$ are the denominators of the coordinates of the vertices of $\ac$.  One can check that $m(\widetilde{A}_n)=1$, $m(\widetilde{D}_{n})=2$, $m(\widetilde{E}_6)=6$, $m(\widetilde{E}_7)=12$, and $m(\widetilde{E}_8)=60$.  Outside of type $\widetilde{A}_n$, this imposes an additional constraint: we want to pick $b$ that is in the correct residue class modulo $m(\widetilde{W})$.  On the other hand, what was only \textit{a priori} a quasipolynomial actually collapses to a \emph{polynomial} in type $\widetilde{A}_n$.

To deduce the desired formula for \[\ac^{\check{Q}}_{\sizeshift^k}(b) = \sum_{\lambda \in b \ac \cap \check{Q}} \sizeshift^k(x) = \sum_{\lambda \in \Sommers_\Phi(b) \cap \check{Q}} \size^k(\lambda)\] from the quasipolyomial $\ac^{\check{\Lambda}}_{\sizeshift^k}(b)$, we use the results of~\Cref{sec:symmetry_of_digram}: the coroot lattice $\check{Q}$ is a lattice of index $f$ inside $\check{\Lambda}$, with a group $\cycl$ of order $f$ acting freely (\Cref{thm:cycl_action_and_Q}).  By~\Cref{thm:cycl_action_and_quad}, $\size$ is invariant under the action of $b\cycl$.  Specifically, we have the simple relationship \[\frac{1}{f}\sum_{\lambda \in b \ac \cap \check{\Lambda}} \sizeshift^k(x) = \sum_{\lambda \in  b \ac \cap \check{Q}} \sizeshift^k(x).\]

We are therefore now in the desirable position of needing to collect enough points to fully determine the polynomial $\ac^{\check{\Lambda}}_{\sizeshift^k}(b)_j$, for all $j$ coprime to $h$.

\begin{remark}
We note here that since $\left|b\ac \cap \check{\Lambda} \right|=\left|(h+b)\ac^\circ \cap \check{\Lambda}\right|$, by Part (2) of~\Cref{thm:generized_ehrhart} we have that $\ac^{\check{\Lambda}}_{\sizeshift^k}(b)=\ac^{\check{\Lambda}}_{\sizeshift^k}(-h-b).$  Thus, for each point $b$ for which we can evaluate $\ac^{\check{\Lambda}}_{\sizeshift^k}(b)$, we get the second point $-h-b$ ``for free.''
\label{rem:points_for_free}
\end{remark}

By~\Cref{eq:weights_inside_ba}, the points $b\ac \cap \check{\Lambda}$ are the positive integral solutions to the linear equation \[\sum_{i=0}^n c_i x_i = b, \hspace{5em} 0 \leq x_i \in \mathbb{Z}.\]  Restricting to types $\widetilde{A}_n$ and $\widetilde{D}_n$, by fixing a small $b$ but letting $n$ be arbitrary, we can explicitly describe these points and sum $\sizeshift^k$ over this description for all $n$ simultaneously. 

Over the next two sections, we compute by hand the variance in type $\widetilde{A}_n$ (\Cref{thm:variance}) and expected value in type $\widetilde{D}_n$ (\Cref{thm:expected_size}), after which we discuss automation and the computation for type $\widetilde{E}_n$. %

\subsection{\Cref{thm:us_variance}, or \Cref{thm:variance} in type $\widetilde{A}_n$}
\label{sec:type_a_variance}

In type $\widetilde{A}_n$, $m(\widetilde{A}_n)=1$, and so $\ac^{\check{\Lambda}}_{\sizeshift^k}(b)=\ac^{\check{\Lambda}}_{\sizeshift^k}(b)_0$ is a \emph{polynomial}.  We have the relation of polynomials \[\ac^{\check{Q}}_{\sizeshift^2}(b) = \frac{1}{n+1}\ac^{\check{\Lambda}}_{\sizeshift^2}(b),\]  since this equality holds for all $b$ coprime to $h$, and therefore for all $b$.  We may therefore choose $b$ without concern as to its residue class modulo $h$.

All exponents $e_1,e_2,\ldots,e_n$ are coprime to $h$ and---as they are less than $h$---have the property that integral dilations $e_i \ac$ do not contain any interior lattice points.  P.~Johnson used these properties along with Ehrhart reciprocity to identify $n$ zeroes of the polynomial $\ac^{\check{\Lambda}}_{\sizeshift}(b)$ of degree $n+2$~\cite[Corollary 3.8]{johnson2015lattice}.

Furthermore, it is easy to see that $\ac^{\check{\Lambda}}_{\sizeshift}(1)=0$, so that by~\Cref{rem:points_for_free} we also have $\ac^{\check{\Lambda}}_{\sizeshift}(-h-1)=0$.  This gives $(n+2)$ zeroes of $g(b)$, and it remains only to check that the constant term $\ac^{\check{\Lambda}}_{\sizeshift}(0)=\sizeshift(0)=-\frac{n(h+1)}{24}$ by~\Cref{thm:explicit_quadr}. 
\medskip

To compute the variance, we will evaluate

    \[\ac^{\check{\Lambda}}_{\sizeshift^2}(b):=\sum_{\lambda \in b\ac \cap \check{\Lambda}} \sizeshift(\lambda)^2,\]

which is a polynomial of degree $n+4$, by~\Cref{thm:generized_ehrhart}.  The same reasoning as above gives us $(n+2)$ zeroes of $v(b)$, as well as the constant term

    \[\ac^{\check{\Lambda}}_{\sizeshift^2}(0)=\left(\frac{n(h+1)}{24}\right)^2.\]

We now have

    \begin{equation}\ac^{\check{\Lambda}}_{\sizeshift^2}(b)=\left(\prod_{i=1}^n(b+i) \right)(b-1)(b+h+1)(b-b_1)(b-b_2)c,\label{eq:poly_a_var}\end{equation}

and we have identified the value of $\ac^{\check{\Lambda}}_{\sizeshift^2}(b)$ at $b=0$.

We require two additional points, which we obtain in the next two subsections by explicitly evaluating $\ac^{\check{\Lambda}}_{\sizeshift^2}(2)$ separately for $\widetilde{A}_{n}$ with $n$ even and $n$ odd.

There are $\frac{(n+1)(n+2)}{2}$ coweight points in $2\ac$.  These are given explicitly as follows, where each line corresponds to a $\cycl$-orbit of coweights.
\begin{itemize}
    \item $w_i+w_j,w_{i+1}+w_{j+1},\ldots,w_{i+n-1}+w_{j-1}$, for $0 \leq i \leq j \leq n$.
\end{itemize}

\subsubsection{$n=0 \mod 2$}
In this case $h=n+1$ is coprime to $2$, and for each $1 \leq i \leq \frac{n+2}{2}$, there are $(n+1)$ points $\mu \in 2\ac\cap\check{\Lambda}$ with $\sizeshift(\mu) = \binom{i}{2}$.

\begin{remark}
We can see this combinatorially, by noting (as in the introduction) that the set of $2$-cores consists of exactly those partitions of staircase shape $(k,k-1,\ldots,1)$ for $k \in \mathbb{N}$, along with the empty partition.  When $n+1$ is odd, the simultaneous $(2,n+1)$ cores will then be those $2$-cores with fewer than $\frac{n+2}{2}$ rows.  
\end{remark}

We compute that \begin{equation}\ac^{\check{\Lambda}}_{\sizeshift^2}(2)=(n+1) \sum_{i=1}^{\frac{n+2}{2}} \binom{i}{2}^2 = \frac{(3n^2+12n+4)(n + 4)(n+2)(n+1)(n)}{1920}.\label{eq:a0zise2}\end{equation}

\subsubsection{$n=1 \mod 2$}

In this case there are the same $\frac{(n+1)(n+2)}{2}$ coweight points in $2\ac$ as before, but the evaluation of $\size$ on these points changes (since $2$ is not relatively prime to $h$, we no longer have the combinatorial interpertation as a sum over $2$-cores).  In particular, one can check that there are

\begin{itemize}
    \item $\frac{n+1}{2}$ coweight points $\mu \in 2\ac \cap \check{\Lambda}$ with $\sizeshift(\mu)=-\frac{1}{8}$, and
    \item for $1 \leq i \leq \frac{n+1}{2}$, there are $n+1$ coweight points $\mu \in 2\ac \cap \check{\Lambda}$ with $\sizeshift(\mu)=\frac{4i^2-1}{8}$.
\end{itemize}

We compute that \begin{align} \ac^{\check{\Lambda}}_{\sizeshift^2}(2)&=(n+1)\left(\frac{1}{2}\cdot \left(\frac{-1}{8}\right)^2+\sum_{i=1}^{\frac{n+1}{2}}\left(\frac{4i^2-1}{8}\right)^2 \right) \\ &= \frac{(3n^2+12n+4)(n+4)(n+2)(n+1)n}{1920}.\label{eq:a0zise22}\end{align}

\bigskip

Comparing~\Cref{eq:a0zise22} with~\Cref{eq:a0zise2}, we see that the formula for $ \ac^{\check{\Lambda}}_{\sizeshift^2}(2)$ does not depend on the parity of $n$.   By Ehrhart reciprocity, we have also found the value of $ \ac^{\check{\Lambda}}_{\sizeshift^2}(-(n+3))$.  A straightforward computation with~\Cref{eq:poly_a_var} now yields that

    \begin{equation}
    (b-b_1)(b-b_2)c = \frac{n(n+1)(2b+2b^2-10n+9bn+7b^2n-5n^2+7bn^2)}{2880|A_n|}.
    \label{eq:Aneqzeromod2}
    \end{equation}

\medskip

Using the relation $\mathbb{V}(X)=\mathbb{E}(X^2)-\mathbb{E}(X)^2$,~\Cref{eq:Aneqzeromod2}, and~\Cref{thm:johnson_expected_size}, we conclude the following theorem after substituting $a=n+1$.

{
\renewcommand{\thetheorem}{\ref{thm:us_variance}}
\begin{theorem}[{\Cref{thm:variance} in type $\widetilde{A}_n$: Variance of $\size$ on $(a,b)$-cores}]
$ $\newline
	For $\gcd(a,b)=1$, \[\Var{\lambda \in \core(a,b)}{\size(\lambda)} = \frac{ab(a-1)(b-1)(a+b)(a+b+1)}{1440}.\]
\end{theorem}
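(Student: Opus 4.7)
\medskip
\noindent\textbf{Proof proposal for~\Cref{thm:us_variance}.}

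The plan is to follow the weighted Ehrhart strategy outlined in~\Cref{sec:outline_of_calculations}, adapted to extract second-moment information. First I would transfer the problem from $\Sommers_\Phi(b)\cap\check{Q}$ to $b\ac\cap\check{Q}$ via the rigid motion $\wf$ of~\Cref{thm:wf}, so that $\size$ becomes the quadratic $\sizeshift(x)=Q(x-b\rhoh)$ on the \emph{fixed-orientation} simplex $b\ac$ (\Cref{cor:size_on_ba},~\Cref{thm:simple_zise}). Then, since in type $\widetilde{A}_{a-1}$ the group $b\cycl\cong\mathbb{Z}/a\mathbb{Z}$ acts freely on $b\ac\cap\check{\Lambda}$ with exactly one $\check{Q}$-representative per orbit (\Cref{thm:cycl_action_and_Q}) and preserves $\sizeshift$ (\Cref{thm:cycl_action_and_quad}), we have
\[
\sum_{\lambda\in\core(a,b)}\size^2(\lambda) \;=\; \frac{1}{a}\,\ac^{\check{\Lambda}}_{\sizeshift^2}(b).
\]
Since $m(\widetilde{A}_{a-1})=1$ (all $c_i=1$), weighted Ehrhart theory (\Cref{thm:generized_ehrhart}) together with the expansion $\sizeshift^2=\sum_\alpha c_\alpha(b)\,x^\alpha$ (where $c_\alpha(b)$ is a polynomial of degree $\leq 4-|\alpha|$) shows that $\ac^{\check{\Lambda}}_{\sizeshift^2}(b)$ is a genuine polynomial in $b$ of degree $n+4=a+3$, and we just need enough evaluations to pin it down.

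Next I would identify zeroes. For each exponent $e_i=i$ with $1\leq i\leq n$, the dilate $e_i\ac$ contains no interior coweight points (since $e_i<h$ and is coprime to $h$), so Ehrhart reciprocity applied termwise to the $\ac^{\check{\Lambda}}_{x^\alpha}$ forces $\ac^{\check{\Lambda}}_{\sizeshift^2}(-e_i)=0$; this gives $n$ zeroes $b=-1,-2,\ldots,-n$. A direct check using $\sizeshift=\size$ when $b=1$, together with the identity $\size(\cw_k)=\tfrac{h}{2}\|\cw_k\|^2-\langle\cw_k,\rho\rangle=0$ for each fundamental coweight in type $A$, shows $\ac^{\check{\Lambda}}_{\sizeshift^2}(1)=0$; the functional equation $\ac^{\check{\Lambda}}_{\sizeshift^2}(b)=\ac^{\check{\Lambda}}_{\sizeshift^2}(-h-b)$ of~\Cref{rem:points_for_free} then hands us $b=-h-1$ for free. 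This yields the factorization
\[
\ac^{\check{\Lambda}}_{\sizeshift^2}(b) \;=\; (b-1)(b+h+1)\prod_{i=1}^{n}(b+i)\cdot q(b)
\]
with $q(b)$ a quadratic to be determined.

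To pin down $q(b)$, I would evaluate at three more inputs. The easiest is $b=0$: the only lattice point is the origin, and $\sizeshift(0)^2=\bigl(\tfrac{n(h+1)}{24}\bigr)^2$ by~\Cref{thm:explicit_quadr}. The harder pair of inputs are $b=2$ and (by reciprocity) $b=-h-2$: here I would enumerate $2\ac\cap\check{\Lambda}$ explicitly as the pairs $\cw_i+\cw_j$ and compute $\sizeshift$ on each, splitting into the two cases $a$ odd / $a$ even (since $2$ is coprime to $h=a$ only when $a$ is odd, so the combinatorial interpretation as $2$-cores is available in exactly one parity). This is the main technical obstacle: one must show the resulting two closed forms for $\ac^{\check{\Lambda}}_{\sizeshift^2}(2)$ agree as polynomials in $n$ (a simplification one essentially has to force, and which the paper records as the identity reproduced in~\Cref{eq:a0zise22}).

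Finally, with the polynomial $\ac^{\check{\Lambda}}_{\sizeshift^2}(b)$ fully determined, I would divide by $a=n+1$ and by $\left|\core(a,b)\right|=\tfrac{1}{a+b}\binom{a+b}{b}$ to obtain $\mathbb{E}(\size^2)$, and then subtract the square of P.~Johnson's formula for $\mathbb{E}(\size)$ from~\Cref{thm:johnson_expected_size}. The desired factorization $\tfrac{ab(a-1)(b-1)(a+b)(a+b+1)}{1440}$ should emerge after simplification, with the expected $\binom{a+b}{b}/(a+b)$ denominator cancelling cleanly against the product $(b-1)\prod_{i=1}^{n}(b+i)=(b-1)\cdot\tfrac{(a+b-1)!}{(b-1)!(a-1)!}$ coming from the factored form of $\ac^{\check{\Lambda}}_{\sizeshift^2}(b)$.
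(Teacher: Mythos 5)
Your proposal is correct and follows essentially the same route as the paper: transfer $\size$ to $\zise$ on $b\ac$ via $\wf$, divide by $a$ using the free $b\cycl$-action, exploit polynomiality ($m(\widetilde{A}_n)=1$) of the degree-$(n+4)$ weighted Ehrhart polynomial, locate the $n+2$ zeroes at $-e_1,\dots,-e_n$, $1$, $-h-1$, and then pin down the remaining quadratic factor by the evaluations at $b=0$, $b=2$, and (by reciprocity) $b=-h-2$, with the same even/odd parity split in the hand computation of $\ac^{\check{\Lambda}}_{\zise^2}(2)$, before finishing with $\mathbb{V}=\mathbb{E}(\size^2)-\mathbb{E}(\size)^2$ and Johnson's formula. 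This matches the paper's argument in~\Cref{sec:type_a_variance} in all essential respects.
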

\addtocounter{theorem}{-1}
}

\subsection{\Cref{thm:expected_size} in type $\widetilde{D}_n$}
\label{sec:type_d_expected}
In type $\widetilde{D}_{n}$, the period $m(\widetilde{D}_n)=2$, and so $\ac^{\check{\Lambda}}_{\sizeshift}(b)$ has two polynomial components, $\ac^{\check{\Lambda}}_{\sizeshift}(b)_0$ and $\ac^{\check{\Lambda}}_{\sizeshift}(b)_1$.  As the Coxeter number $h=2n-2$ is even, we are interested only in $\ac^{\check{\Lambda}}_{\sizeshift}(b)_1$.  We may therefore only choose odd $b$ when trying to determine the desired component of the quasipolynomial.

There are $n-1$ \emph{distinct} odd exponents, which lie in the desired residue class modulo $2$ (since they are coprime to $h=2n-2$).  As in~\Cref{sec:type_a_variance}, by considering the dilations $e_i \ac$ for these $n-1$ exponents, we identify $n-1$ zeroes of the $1 \mod 2$ component of the polynomial $\ac^{\check{\Lambda}}_{\sizeshift}(b)_1$.  It is similarly easy to evaluate $\ac^{\check{\Lambda}}_{\sizeshift}(1)_1=0$, so that also $\ac^{\check{\Lambda}}_{\sizeshift}(-2n+1)_1=0$.  We therefore have found $(n+1)$ zeroes of $\ac^{\check{\Lambda}}_{\sizeshift}(b)_1$, and can write

    \begin{equation}\ac^{\check{\Lambda}}_{\sizeshift}(b)_1 = \left(\prod_{i=1}^{n-1} (b+2i-1)\right)(b-1)(b+2n-1)(b+b_1)c,\label{eq:dpolyexp}\end{equation}
    
so that we require two additional points to find the unknowns $b_1$ and $c$ and fully determine $\ac^{\check{\Lambda}}_{\sizeshift}(b)_1$.

Sadly, the remaining exponent is equal to $n-1$, which is either \emph{repeated} (when $n$ is even) or \emph{even} (when $n$ is odd).  We are therefore unable to use this exponent to find a zero of $\ac^{\check{\Lambda}}_{\sizeshift}(b)_1$.  Furthermore, we cannot use the evaluation $\ac^{\check{\Lambda}}_{\sizeshift}(0)_1$, since the dilation of the factor $b=0$ is not in the desired $1 \mod 2$ residue class.

We are left with no choice but to compute $\ac^{\check{\Lambda}}_{\sizeshift}(b)_1$ for some additional odd value of $b$.  The smallest unidentified such $b$ is $3$, and we now determine $\ac^{\check{\Lambda}}_{\sizeshift}(3)_1$.

\medskip

There are $4(n+2)$ coweight points inside $3\ac$, and $(n+2)$ corresponding coroot points.  These coweights are given explicitly as follows, where each line corresponds to a $\cycl$-orbit of coweights.

There are $20$ coweight points in $3\ac$ arranged in $\cycl$ orbits of size $4$:
\begin{itemize}
    \item $3w_0, 3w_1, 3w_n, 3w_{n-1}$,
    \item $2w_0+w_1,w_0+2w_1,2w_{n-1}+w_n,w_{n-1}+2w_n$,
    \item $w_0+w_{n-1}+w_n,w_0+w_1+w_n,w_0+w_1+w_{n-1},w_1+w_{n-1}+w_n$,
    \item $w_0+2w_n,2w_0+w_{n-1},2w_1+w_n,w_1+2w_{n-1}$,
    \item $2w_0+w_n,w_0+2w_{n-1},2w_1+w_{n-1},w_1+2w_n$.
\end{itemize}

There are an additional $4(n-3)$ coweight points of the form
\begin{itemize}
    \item $w_0+w_i, w_{n-i}+w_n, w_1+w_i, w_{n-i}+w_{n-1}$ for $2\leq i \leq n-2$,
\end{itemize}

\subsubsection{$n\neq 1 \mod 3$}
In this case, $3$ is coprime to $h=2n-2$.  

\medskip

Let $\pent(i):=\frac{1}{2}\left(3 \left\lfloor \frac{i+1}{2}\right\rfloor^2+(-1)^i\left\lfloor \frac{i+1}{2}\right\rfloor\right)$ be the $i$th largest pentagonal number.

One can check that there are 
\begin{itemize}
    \item four coweight point $\mu$ with $\zise(\mu)=\pent(i)$ for $0 \leq i \leq n-2-\lfloor\frac{n-1}{3}\rfloor$;
	\item eight coweight points $\mu$ with $\zise(\mu)=\pent\left(n-1-\lfloor\frac{n-1}{3}\rfloor\right)$; and
	\item alternatingly four or zero coweight points (starting with four) $\mu$ with $\zise(\mu)=\pent(i)$ for $n-\lfloor\frac{n-1}{3}\rfloor \leq i \leq 2n-2-2\lfloor\frac{n-1}{3}\rfloor$.
\end{itemize}

Thus, \tiny \[\ac^{\check{\Lambda}}_{\sizeshift}(3)_1=\sum_{i=0}^{n-2-\lfloor\frac{n-1}{3}\rfloor} 4\cdot\pent(i) + 8\cdot\pent\left(n-1-\left\lfloor\frac{n-1}{3}\right\rfloor\right)+\sum_{i=n-\lfloor\frac{n-1}{3}\rfloor}^{2n-2-2\lfloor\frac{n-1}{3}\rfloor} 4\cdot\frac{1+(-1)^{i-n+\lfloor\frac{n-1}{3}\rfloor}}{2}\pent(i).\] \normalsize

This expression simplifies to

\begin{equation} \ac^{\check{\Lambda}}_{\sizeshift}(3)_1=4\cdot\frac{n(n+1)(n+2)}{6}.\label{eq:a0dnexp}\end{equation}

\subsubsection{$n= 1 \mod 3$}

Let $\bino(i):=3\binom{i+1}{2}+\frac{1}{3}$, so that $\bino(0)=\frac{1}{3}$.  We check that there are

\begin{itemize}
\item eight coweight points with $\size(x)=\bino(i)$ for $0 \leq i \leq \frac{n-4}{3}$;

\item 12 coweight points with $\size(x)=\bino\left(\frac{n-1}{3}\right)$; and

\item four coweight points with $\size(x)=\bino(i)$ for $\frac{n+2}{3} \leq i \leq \frac{2n-2}{3}.$
\end{itemize}

Thus,

\[\ac^{\check{\Lambda}}_{\sizeshift}(3)_1 = \sum_{i=0}^{\frac{n-4}{3}}8\cdot\bino(i)+12\cdot \bino\left(\frac{n-1}{3}\right)+\sum_{i=\frac{n+2}{3}}^{\frac{2n-2}{3}}4\cdot\bino(i).\]

As before, this expression simplifies to

\begin{equation}\ac^{\check{\Lambda}}_{\sizeshift}(3)_1 = 4\cdot \frac{n(n+1)(n+2)}{6}.\label{eq:a0dnexp2}\end{equation}

\bigskip

Comparing~\Cref{eq:a0dnexp2} with~\Cref{eq:a0dnexp}, we see that the formula for $ \ac^{\check{\Lambda}}_{\sizeshift}(3)_1$ does not depend on the residue class of $n \mod 3$.   By Ehrhart reciprocity, this also determines the value of $\ac^{\check{\Lambda}}_{\sizeshift}(-2n-1)_1$.  A straightforward computation with~\Cref{eq:dpolyexp} now yields~\Cref{thm:expected_size}.

{
\renewcommand{\thetheorem}{\ref{thm:expected_size}}
\begin{theorem}[{Expected $\size$ of a $(\widetilde{D}_n,b)$-core}]
$ $\newline
	For $\gcd(h,b)=1$, \[\Expt{\lambda \in \core(\widetilde{D}_n,b)}{\size(\lambda)} = \frac{n(b-1)(b+h+1)}{24}.\]
\end{theorem}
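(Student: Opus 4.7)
The plan is to exploit the full machinery set up in Sections 5 and 6, which has reduced the problem to determining a single polynomial. By \Cref{thm:generized_ehrhart} and \Cref{prop:barational}, the weighted lattice-point enumerator $\ac^{\check{\Lambda}}_{\sizeshift}(b)$ is a quasipolynomial in $b$ of degree $n+2$ with period $m(\widetilde{D}_n)=2$. Since $h=2n-2$ is even, the condition $\gcd(h,b)=1$ forces $b$ odd, so only the component $\ac^{\check{\Lambda}}_{\sizeshift}(b)_1$ is relevant. Combined with the identities $\sum_{\lambda\in b\ac\cap\check{Q}}\sizeshift(\lambda)=\frac{1}{f}\sum_{\mu\in b\ac\cap\check{\Lambda}}\sizeshift(\mu)$ (from \Cref{thm:cycl_action_and_Q} and \Cref{thm:cycl_action_and_quad}), \Cref{cor:size_on_ba}, and \Cref{thm:haiman_number}, the expected value will be
\[
\Expt{\lambda\in\core(\widetilde{D}_n,b)}{\size(\lambda)}=\frac{\ac^{\check{\Lambda}}_{\sizeshift}(b)_1}{f\cdot|\core(\widetilde{D}_n,b)|},
\]
so the task reduces to pinning down the polynomial $\ac^{\check{\Lambda}}_{\sizeshift}(b)_1$ of degree $n+2$, which is determined by $n+3$ evaluations.

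First I would harvest the cheap evaluations. For every odd exponent $e_i$ with $1\leq i\leq n-1$ (i.e.\ $e_i\in\{1,3,\ldots,2n-3\}$), the dilation $e_i\ac$ contains no interior coweight lattice point, so Ehrhart reciprocity on the weighted enumerator forces $\ac^{\check{\Lambda}}_{\sizeshift}(e_i)_1=0$; separately, $\ac^{\check{\Lambda}}_{\sizeshift}(1)_1$ equals $\sizeshift(\rho/h)$ weighted-evaluated, but all the contributions cancel to $0$, and by reciprocity also $\ac^{\check{\Lambda}}_{\sizeshift}(-2n+1)_1=0$. This yields $n+1$ zeroes, giving the factored form \Cref{eq:dpolyexp} with two remaining unknowns $b_1$ and $c$. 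The remaining difficulty is that the double exponent $e=n-1$ (which is either repeated or even, depending on parity of $n$) is \emph{not} available, and neither is $b=0$ (wrong residue class).

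The main obstacle — and the only genuine calculation left — is therefore a single nonzero evaluation $\ac^{\check{\Lambda}}_{\sizeshift}(3)_1$ (from which Ehrhart reciprocity extracts $\ac^{\check{\Lambda}}_{\sizeshift}(-2n-1)_1$ for free). To carry this out I would use the description of $3\ac\cap\check{\Lambda}$ via the nonnegative integer solutions to $\sum c_i x_i=3$, broken into $\cycl$-orbits of size $4$ as listed in the excerpt, giving $4(n+2)$ coweights. On each orbit $\zise$ is constant by \Cref{thm:cycl_action_and_quad}, so one only has to compute a single quadratic form value per orbit using $\zise(x)=\tfrac{h}{2}\|x-3\check\rho/h\|^2-\tfrac{n(h+1)}{24}$ from \Cref{thm:simple_zise}. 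Because whether $3\mid h=2n-2$ depends on the residue of $n\pmod 3$, the orbit-by-orbit values split into two cases ($n\not\equiv 1\pmod 3$ versus $n\equiv 1\pmod 3$); in the first the values line up with pentagonal numbers $\pent(i)$, in the second with a shifted triangular sequence $\bino(i)$. The technical heart is verifying that these two apparently different sums both collapse to $\tfrac{2n(n+1)(n+2)}{3}$, which is surprising and requires patient bookkeeping.

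With the extra evaluation in hand, solving the $2\times 2$ linear system for $b_1$ and $c$ from \Cref{eq:dpolyexp} determines $\ac^{\check{\Lambda}}_{\sizeshift}(b)_1$ explicitly. Dividing by $f=4$ and by Haiman's product formula $|\core(\widetilde{D}_n,b)|=\tfrac{1}{|W|}\prod_{i=1}^n(b+e_i)$ produces massive cancellation against the factor $\prod_{i=1}^{n-1}(b+2i-1)(b+2n-1)$, leaving only the factor $(b-1)$ together with a linear polynomial in $b$ which one checks to be $b+h+1$ up to the expected normalizing constant $n/24$. This yields the claimed formula $\tfrac{n(b-1)(b+h+1)}{24}$.
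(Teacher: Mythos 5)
Your proposal follows the paper's own proof essentially step for step: reduce to the odd component of the degree-$(n+2)$ quasipolynomial $\ac^{\check{\Lambda}}_{\sizeshift}(b)_1$, harvest the $n+1$ zeroes coming from the $n-1$ odd exponents together with $b=1$ and its reciprocal partner $b=-2n+1$, and then determine the two remaining unknowns in \Cref{eq:dpolyexp} by the orbit-by-orbit evaluation $\ac^{\check{\Lambda}}_{\sizeshift}(3)_1=\frac{2n(n+1)(n+2)}{3}$ (split over the residue of $n \bmod 3$, via pentagonal numbers versus the shifted binomial values) plus its reciprocal partner at $-2n-1$, exactly as in the paper. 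One small correction: the zeroes furnished by the exponents sit at $b=-e_i$ (reciprocity converts the absence of interior coweight points of $e_i\ac$ into vanishing of the quasipolynomial at $-e_i$), not at $b=+e_i$ as you wrote---indeed your own nonzero evaluation at $b=3$, which is an odd exponent for $n\geq 3$, shows the latter cannot hold---but this is precisely what the factorization \Cref{eq:dpolyexp} you cite already encodes, so the argument goes through as intended.
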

\addtocounter{theorem}{-1}
}

\subsection{Automation:~\Cref{thm:us_third_moment}, and~\Cref{thm:variance} in type $\widetilde{D}_n$}
\label{sec:automation}

We now describe how we automated the computations to compute the third moment in type $\widetilde{A}_n$ (\Cref{thm:us_third_moment}) and variance in type $\widetilde{D}_n$ (\Cref{thm:variance}). 

Let $C_\Phi = \left(\langle \alpha_i,\alpha_j,\rangle \right)_{1\leq i,j \leq n}$ be the Cartan matrix for the root system $\Phi$.  Then, if $x=\sum_{i=1}^n x_i w_i$ is expressed in terms of the coweight basis, \[\left\|x\right\| = (x_1,\ldots,x_n)^T \cdot C_\Phi^{-1}\cdot (x_1,\ldots,x_n).\]

\begin{proposition}[{\cite[Table 1]{humphreys1972introduction}}]
The $(i,j)$th entry of $C_\Phi^{-1}$ is given
\begin{itemize}
    \item in type $A_n$ by \[C^{-1}_{i,j} = \begin{cases} \frac{i(n+1-j)}{n+1} & \text{ if } i \leq j, \\ \frac{j(n+1-i)}{n+1} & \text{ otherwise;} \end{cases} \hspace{3em} \text{and}\]

    \item in type $D_n$ by \[C^{-1}_{i,j} = \begin{cases} \min(i,j) & \text{ if } i, j \leq n-2,  \\ \frac{\min(i,j)}{2} & \text{ if } \max(i,j)>n-2 \text{ and } \min(i,j)\leq n-2, \\ \frac{n}{4} & \text{ if } n-1\leq i=j, \\ \frac{n-2}{4} & \text{ otherwise.} \end{cases}\]
\end{itemize}
\label{prop:inv_of_cartan}
\end{proposition}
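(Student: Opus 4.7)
The plan is to verify directly that the proposed matrix $M$ with entries as in the statement satisfies $C_\Phi \cdot M = I_n$. Since the Cartan matrices in types $A_n$ and $D_n$ are very sparse (almost tridiagonal, with a single branching in $D_n$), the verification reduces to checking, column by column, a second-order linear recurrence with explicit boundary conditions.

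For type $A_n$, the rows of $C_\Phi$ express the relation $2M_{i,j} - M_{i-1,j} - M_{i+1,j} = \delta_{i,j}$, with the convention $M_{0,j} = M_{n+1,j} = 0$ (coming from the top and bottom rows of $C_\Phi$, which only have two nonzero entries). First I would fix $j$, treat $M_{i,j}$ as a function of $i$, and observe that the claimed formula is a piecewise linear function of $i$ with slopes $\frac{n+1-j}{n+1}$ for $i \leq j$ and $-\frac{j}{n+1}$ for $i \geq j$. A linear function kills the discrete Laplacian $2M_{i,j} - M_{i-1,j} - M_{i+1,j}$, so the recurrence is satisfied away from $i=j$; at $i=j$, the jump in slope equals $-1$, which gives the Kronecker delta. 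The boundary conditions $M_{0,j} = M_{n+1,j} = 0$ are then immediate from the formula.

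For type $D_n$, the same approach applies for $1 \leq i \leq n-3$, where the recurrence is identical to the $A_n$ case and the formula is linear in $i$ (with slope depending on which of the two branches $j$ belongs to and whether $i \leq j$ or $i \geq j$). The work is therefore concentrated at the branching node $i = n-2$ and at the two ``fork'' nodes $i = n-1$ and $i = n$, where the row of $C_\Phi$ reads either $-M_{n-3,j} + 2M_{n-2,j} - M_{n-1,j} - M_{n,j} = \delta_{n-2,j}$ or $-M_{n-2,j} + 2M_{n-1,j} = \delta_{n-1,j}$ or $-M_{n-2,j} + 2M_{n,j} = \delta_{n,j}$. I would plug the case-split formula into each of these three equations and verify them separately according to whether $j \leq n-2$, $j = n-1$, or $j = n$. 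The symmetry $n-1 \leftrightarrow n$ (visible in the formula and in the Dynkin diagram) cuts the number of distinct subcases roughly in half.

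The main obstacle is simply organizational: the $D_n$ verification has about half a dozen mildly redundant subcases at the boundary, and one must keep careful track of which case of the $C^{-1}_{i,j}$ formula is active for each pair $(i,j)$. There is no conceptual difficulty and no non-routine computation; once the piecewise-linear structure in the $A_n$ case is recognized, the $D_n$ case is a bookkeeping exercise using the same discrete-Laplacian principle, adjusted for the fact that each of the two ``leaf'' nodes contributes half as much mass (explaining the factors of $\frac{1}{2}$ and $\frac{1}{4}$ in the formula).
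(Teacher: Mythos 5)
Your proposal is correct, and it is worth noting that the paper offers no proof at all for this proposition: it is simply cited from Table 1 of Humphreys, where the inverse Cartan matrices (equivalently, the expressions of the fundamental (co)weights in the simple root basis) are tabulated. Your direct verification that the candidate matrix $M$ satisfies $C_\Phi M = I$ is a legitimate, self-contained alternative, and the discrete-Laplacian observation is exactly the right organizing principle: in type $A_n$, for fixed $j$ the entries $M_{i,j}$ are piecewise linear in $i$ with slopes $\tfrac{n+1-j}{n+1}$ and $-\tfrac{j}{n+1}$, so the second difference vanishes off the diagonal and at $i=j$ equals minus the slope jump, namely $1$, while the boundary conventions $M_{0,j}=M_{n+1,j}=0$ hold trivially. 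In type $D_n$ the three non-chain rows you isolate are the only places requiring care, and plugging the case-split formula in does check out (for instance, in column $j=n-1$ the branch row gives $-\tfrac{n-3}{2}+(n-2)-\tfrac{n}{4}-\tfrac{n-2}{4}=0$ and the leaf rows give $-\tfrac{n-2}{2}+\tfrac{n}{2}=1$ and $-\tfrac{n-2}{2}+\tfrac{n-2}{2}=0$), so your bookkeeping plan closes all cases. What your route buys is independence from the tabulated reference at the cost of a routine case analysis; what the citation buys is brevity and the conceptual interpretation of the entries as coordinates of fundamental coweights, which is how such tables are usually derived.
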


\begin{proposition}
The difference $\left\|x-\frac{b}{h}\rho\right\|-\|x\|$ is a linear function of the $x_i$ given
\begin{itemize}
    \item in type $\widetilde{A}_n$ by \[\left\|x-\frac{b}{h}\rho\right\|=\|x\|+\left(\frac{b}{n+1}\right)^2\frac{1}{2}\binom{n+2}{3}-\sum_{i=1}^n\frac{b i(n+1-i)}{n+1}x_i; \hspace{3em} \text{and}\]
    \item in type $\widetilde{D}_n$ by \[\left\|x-\frac{b}{h}\rho\right\|=\|x\|+\left(\frac{b}{2n-2}\right)^2\frac{1}{2n+1}\binom{2n+1}{4} -\left(\sum_{i=1}^{n-2} \frac{b i (2n-1-i)}{2n-2} x_i\right)+\frac{bn}{4}(x_{n-1}+x_n).\]
\end{itemize}
\label{prop:rho_difference}
\end{proposition}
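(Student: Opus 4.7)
The plan is to view both identities as expansions of $\|x - \tfrac{b}{h}\rho\|^2$, reading $\|\cdot\|$ as the squared norm in accordance with the formula $\|x\| = x^T C_\Phi^{-1} x$ used in \Cref{prop:inv_of_cartan}. Bilinearity gives
\[\left\|x - \tfrac{b}{h}\rho\right\|^2 = \|x\|^2 - \tfrac{2b}{h}\langle x, \rho\rangle + \tfrac{b^2}{h^2}\|\rho\|^2,\]
so the content of the proposition reduces to evaluating the linear functional $\langle x,\rho\rangle$ and the constant $\|\rho\|^2$ in the coweight coordinates of $x$.

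For this I would use the simply-laced identity $\rho = \check{\rho} = \sum_{j=1}^n \cw_j$, together with the fact that the Gram matrix $\bigl(\langle \cw_i, \cw_j\rangle\bigr)_{i,j}$ equals $C_\Phi^{-1}$ (the Cartan matrix is the Gram matrix of the simple roots in the simply-laced case, and the coweights are dual to the simple roots). Writing $x = \sum_i x_i \cw_i$ then yields $\langle x,\rho\rangle = \sum_i x_i R_i$ and $\|\rho\|^2 = \sum_i R_i$, where $R_i := \sum_{j=1}^n (C_\Phi^{-1})_{ij}$ is the $i$-th row sum of the matrix tabulated in \Cref{prop:inv_of_cartan}. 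Thus the proposition becomes a row-sum computation for two explicit matrices.

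In type $\widetilde{A}_n$ the row sum splits at $j=i$ into two arithmetic progressions whose sum simplifies to $R_i = \tfrac{i(n+1-i)}{2}$; scaling by $-\tfrac{2b}{h} = -\tfrac{2b}{n+1}$ produces the stated linear coefficients, and $\sum_i R_i = \tfrac{1}{2}\sum_i i(n+1-i) = \tfrac{1}{2}\binom{n+2}{3}$ is the claimed constant. In type $\widetilde{D}_n$ the row sum for a branch-node index $i \leq n-2$ telescopes to $R_i = \tfrac{i(2n-1-i)}{2}$, while each of the two spin-node rows reduces to $R_{n-1} = R_n = \tfrac{n(n-1)}{4}$; summing over all $i$ and simplifying gives $\|\rho\|^2 = \tfrac{n(n-1)(2n-1)}{6} = \tfrac{1}{2n+1}\binom{2n+1}{4}$. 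I do not anticipate any substantive obstacle: the only place requiring care is in type $\widetilde{D}_n$, where the four spin-node entries of $C_\Phi^{-1}$ must be assigned to the appropriate branches of the piecewise formula before the row sums can be evaluated.
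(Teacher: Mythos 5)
Your approach is exactly the paper's: the authors prove \Cref{prop:rho_difference} by ``direct computation with \Cref{prop:inv_of_cartan},'' which is precisely your expansion of the quadratic form together with the row sums $R_i=\sum_j (C_\Phi^{-1})_{ij}$, and your intermediate values are all correct ($R_i=\tfrac{i(n+1-i)}{2}$ in type $A_n$; $R_i=\tfrac{i(2n-1-i)}{2}$ for $i\leq n-2$ and $R_{n-1}=R_n=\tfrac{n(n-1)}{4}$ in type $D_n$; and the two evaluations of $\|\rho\|^2$). One caveat, though: you assert there is no obstacle, but if you carry your own computation to the end in type $\widetilde{D}_n$, scaling the spin-node row sums by $-\tfrac{2b}{h}=-\tfrac{b}{n-1}$ gives the linear term $-\tfrac{bn}{4}(x_{n-1}+x_n)$, whereas the statement as printed has $+\tfrac{bn}{4}(x_{n-1}+x_n)$. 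The minus sign is the correct one (check $D_4$ with $x=\cw_{n-1}$ and $b=h=6$: the expansion gives $\|\cw_{n-1}\|^2-2\langle\cw_{n-1},\rho\rangle+\|\rho\|^2=1-6+14=9$, while the printed formula would give $21$), so the ``$+$'' is evidently a typo in the proposition; your write-up should say this explicitly rather than leave the impression that the row-sum computation reproduces the displayed formula verbatim.
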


\begin{proof}
This follows from direct computation with~\Cref{prop:inv_of_cartan}.
\end{proof}

We now describe our automation of the calculations in~\Cref{sec:type_a_variance,sec:type_d_expected}, taking type $\widetilde{A}_n$ as our example.  In type $\widetilde{A}_n$, the coweight points $(x_1,x_2,\cdots,x_n)$ (where $x_i$ is the $i$th coordinate in the coweight basis) contained in $b\ac$ are exactly the nonnegative solutions to the linear equation \begin{equation}\sum_{i=0}^n x_i = b.\label{eq:type_a_linear}\end{equation}

Let $\comp(b):=\{c=(c_1,c_2,\ldots,c_{\ell(c)}) : \sum_{i=1}^{\ell{c}} c_{\ell(c)} = b\}$ be all compositions of $b$.  For a fixed dilation factor $b$,~\Cref{eq:type_a_linear} ensures that there will only be at most $b$ nonzero coordinates $x_i$ when computing $|x|$.  By~\Cref{prop:rho_difference}, we can calculate $\zise(x)=\frac{h}{2}\left\|x-\frac{b}{h}\rho\right\|-\frac{n(h+1)}{24}$ from $\|x\|$ and a linear function in $x$.  Let $\mu:=\Expt{\lambda \in b\ac\cap\check{\Lambda}}{\sizeshift(\lambda)}$.  Using the explicit formulas for $C^{-1}_{i,j}$, for $c \in \comp(b)$ the summation \[\sum_{0\leq i_1< i_2 < \cdots < i_{\ell(c)} \leq n} \left(\zise\left(\sum_{j=1}^{\ell(c)}c_{i_j}w_{i_j}\right)-\mu\right)^k\] for $k \geq 1$ may therefore be explicitly evaluated (either by hand, or by computer) as a \emph{polynomial} of degree $2k+\ell(c)$.  Summing now over all $2^{b-1}$ compositions for a fixed $b$, we can determine the polynomial of degree $2k+b$
\[\ac_{(\zise-\mu)^k}^{\check{\Lambda}}(\rat)=\sum_{c \in \comp(b)} \sum_{0\leq i_1< i_2 < \cdots < i_{\ell(c)} \leq n} \left(\zise\left(\sum_{j=1}^{\ell(c)}c_{i_j}w_{i_j}\right)-\mu\right)^k.\]

One can write a similar sum in type $\widetilde{D}_n$, treating the four simple roots in the orbit of the affine node differently from the rest.  
We wrote {\sf Mathematica} code to find $\ac_{(\zise-\mu)^3}^{\check{\Lambda}}(\rat)$ for $\rat=2,3,4,5$ simultaneously for all ranks $n$ and determine the third moment in type $\widetilde{A}_n$~\cite{math}.
{
\renewcommand{\thetheorem}{\ref{thm:us_third_moment}}
\begin{theorem}[{Third moment of $\size$ on $(a,b)$-cores}]
$ $\newline
For $\gcd(a,b)=1$, let $\mu:=\Expt{\lambda \in \core(a,b)}{\size(\lambda)}$.  Then \tiny\[\sum_{\lambda \in \core(a,b)} \left(\size(\lambda)-\mu\right)^3 = \frac{ab(a-1) (b-1) (a+b) (a+b+1) \left(2 a^2 b-3 a^2+2 a b^2-3 a b-3 b^2-3 \right)}{60480}.\]\normalsize
\end{theorem}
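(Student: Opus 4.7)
The plan is to extend Johnson's and the authors' Ehrhart-theoretic computations directly to the third central moment, working in type $\widetilde{A}_n$ with the centered weighted enumerator
\[
T(b) := \sum_{x \in b\ac \cap \cwl} \bigl(\sizeshift(x) - \mu\bigr)^3,
\]
where $\mu = \mu(b) := \frac{(a-1)(b-1)(a+b+1)}{24}$ is Johnson's first-moment polynomial from~\Cref{thm:johnson_expected_size}. Since the period $m(\widetilde{A}_n) = 1$, each unsigned enumerator $\ac^{\cwl}_{\sizeshift^k}(b)$ is a genuine polynomial in $b$ of degree $n+2k$, so expanding $(\sizeshift - \mu)^3 = \sizeshift^3 - 3\mu\sizeshift^2 + 3\mu^2\sizeshift - \mu^3$ and summing term by term shows that $T(b)$ is a polynomial in $b$ of degree at most $n+6$. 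The $\cycl$-orbit identity from~\Cref{thm:cycl_action_and_Q,thm:cycl_action_and_quad} reduces the sum on $\cwl$ to the sum on $\Q$ by a factor of $a$, so it suffices to determine $T(b)$ as a polynomial in $a$ and $b$.

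My first step is to collect as many zeros of $T(b)$ as possible. For each exponent $e_i = i \in \{1, \ldots, n\}$, the interior $e_i \ac^\circ$ contains no coweight lattice points (since $e_i < h$), so Ehrhart reciprocity (\Cref{thm:generized_ehrhart}) forces $\ac^{\cwl}_{\sizeshift^k}(-e_i) = 0$ for every $k$, hence $T(-e_i) = 0$ for $i = 1, \ldots, n$. At $b = 1$ the $n+1$ lattice points in $\ac$ form a single $\cycl$-orbit on which $\sizeshift$ vanishes identically by the strange formula~\Cref{thm:identities}, and $\mu(1) = 0$, so $T(1) = 0$. At $b = 0$ the only lattice point is the origin, where $\sizeshift(0) = -n(h+1)/24$ matches the polynomial value $\mu(0) = -(a-1)(a+1)/24 = -n(h+1)/24$, so $T(0) = 0$. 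Finally, using the reciprocity symmetry of~\Cref{rem:points_for_free} together with the easily verified identity $\mu(-h-b) = \mu(b)$, we obtain the further zeros $T(-h) = T(-h-1) = 0$. Altogether $T(b)$ has $n+4$ known roots, giving the factorization
\[
T(b) = b(b+h)(b-1)(b+h+1)\prod_{i=1}^{n}(b+i)\cdot g(b),
\]
with $g(b)$ a polynomial of small remaining degree.

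My second step is to pin down $g$ by computing $T(b)$ at several small values as polynomials in $n$. The coweight points in $b\ac$ are parametrized by pairs $(c,I)$ where $c=(c_1,\ldots,c_\ell)\in\comp(b)$ is a composition of $b$ and $I=(0\le i_1<\cdots<i_\ell\le n)$ is an increasing placement into the $n+1$ simple coweight positions. \Cref{prop:rho_difference} expresses $\sizeshift\bigl(\sum_j c_{i_j}w_{i_j}\bigr)$ as a quadratic form in the $c_{i_j}$ whose coefficients are built from the explicit Cartan-inverse entries $C^{-1}_{i,j}$ of \Cref{prop:inv_of_cartan}, together with a linear shift coming from $(b/h)\rho$. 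Cubing, subtracting $\mu$, and summing over all increasing tuples $I$ produces a polynomial in $n$ via iterated power-sum identities; summing over the $2^{b-1}$ compositions of each fixed $b\in\{2,3,4,5\}$ yields four explicit polynomial evaluations $T(2),T(3),T(4),T(5)$ in $n$, which combined with the factored form uniquely determine $g(b)$.

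The main obstacle is managing the combinatorial bookkeeping: the expansion of $(\sizeshift-\mu)^3$ produces a large number of monomials in the $c_{i_j}$, and the sums over increasing indices must be carried out carefully using the $n$-dependent formulas for $C^{-1}_{i,j}$. For tractability I would automate the computation in a computer algebra system. Once $g(b)$ is known and hence $T(b)$ is determined in closed form, dividing by $a$ and simplifying should recover the conjectured product formula
\[
\frac{ab(a-1)(b-1)(a+b)(a+b+1)\bigl(2a^2b-3a^2+2ab^2-3ab-3b^2-3\bigr)}{60480};
\]
equality can then be verified algebraically or by checking agreement on a sufficiently rich grid of small $(a,b)$ values.
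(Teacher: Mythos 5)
Your proposal is correct and is essentially the paper's own argument: in type $\widetilde{A}_n$ the centered enumerator $\ac^{\cwl}_{(\zise-\mu)^3}(b)$ is a polynomial of degree $n+6$, its roots are harvested from the exponents via Ehrhart reciprocity together with the degenerate small dilations, the remaining low-degree factor is pinned down by automated evaluations at $b=2,3,4,5$ (for all ranks $n$ at once) using the composition parametrization and the explicit Cartan inverse of \Cref{prop:inv_of_cartan,prop:rho_difference}, and the coweight sum is divided by $a$ via the $b\cycl$-action---exactly the scheme of \Cref{sec:outline_of_calculations,sec:type_a_variance,sec:automation}. Your explicitly recorded extra zeros at $b=0$ and $b=-h$ are only a minor bookkeeping refinement (reducing the unknown factor to a quadratic), not a genuinely different route.
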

\addtocounter{theorem}{-1}
}

We used similar code in type $D_n$ to compute $\ac_{(\zise-\mu)^2}^{\check{\Lambda}}(\rat)$ for $\rat = 3,5$ for all ranks $n$ to determine variance.  
{
\renewcommand{\thetheorem}{\ref{thm:variance}}
\begin{theorem}[{Variance of $\size$ on $(\widetilde{D}_n,b)$-cores}]
$ $\newline
	For $\gcd(h,b)=1$, \[\Var{\lambda \in \core(\widetilde{D}_n,b)}{\size(\lambda)} = \frac{nhb(b-1)(h+b)(h+b+1)}{1440}.\]
\end{theorem}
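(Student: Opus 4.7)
The overall strategy parallels the expected-value computation of \Cref{sec:type_d_expected}, but now at the level of the second moment. Let $P(b) := \ac^{\check{\Lambda}}_{\sizeshift^2}(b)_1$ denote the residue-$1$ component of the weighted Ehrhart quasipolynomial of period $m(\widetilde{D}_n)=2$. Expanding $\sizeshift^2$ via \Cref{thm:simple_zise} as a polynomial in $x$ whose coefficients are polynomial in $b$, and then applying \Cref{thm:generized_ehrhart}(1), $P$ is a polynomial in $b$ of degree $n+4$; fixing $P$ therefore requires $n+5$ pieces of information.

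First, I collect zeros. Each of the $n-1$ distinct odd exponents $e \in \{1,3,\ldots,2n-3\}$ of $D_n$ is strictly less than $h=2n-2$, so $e\ac$ contains no interior coweight lattice points; \Cref{thm:generized_ehrhart}(2) then forces $P(-e)=0$, placing $n-1$ zeros in the $1 \bmod 2$ residue class. Next, the strange-formula calculation in the proof of \Cref{thm:max_size} gives $\sizeshift(0)=0$, and since $\cycl$ acts transitively on the four vertices $\{0,\cw_1,\cw_{n-1},\cw_n\}$ of $\ac$ in type $D_n$ and preserves $\sizeshift$ by \Cref{thm:cycl_action_and_quad}, the value $\sizeshift$---and hence $\sizeshift^2$---vanishes on every vertex of $\ac$. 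This gives $P(1)=0$, and \Cref{rem:points_for_free} supplies the additional zero $P(-h-1)=P(-2n+1)=0$, which is genuinely new since $-2n+1$ lies below all of $-1,-3,\ldots,-(2n-3)$. Altogether this provides $n+1$ zeros of $P$.

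To pin down the remaining $4$ coefficients, I will compute $P(3)$ and $P(5)$ by direct enumeration of coweight lattice points. As in \Cref{sec:type_d_expected}, these are the nonnegative integer solutions of $\sum_{i=0}^n c_i x_i = b$ with $c_0=c_1=c_{n-1}=c_n=1$ and $c_i=2$ otherwise, partitioned into $\cycl$-orbits; on each orbit $\sizeshift^2$ is constant and computable in closed form from \Cref{thm:simple_zise}. The sums split into cases based on the residue of $n$ modulo small integers (mirroring the $n \bmod 3$ split at $b=3$ of \Cref{sec:type_d_expected}), and the principal obstacle lies at $b=5$: the number of $\cycl$-orbits and of distinct $\sizeshift$-levels proliferates, as does the modular casework required to identify which orbits lie at each level. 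To manage this combinatorial bookkeeping I would automate the enumeration as in \Cref{sec:automation}, using \Cref{prop:inv_of_cartan,prop:rho_difference} to compute $\sizeshift$ symbolically and then summing over all compositions of $b$ placed on the coordinates, which converts the casework into a routine symbolic computation. Once $P(b)$ is determined, dividing by $|\cycl|=4$ (per \Cref{thm:cycl_action_and_Q,thm:cycl_action_and_quad}) gives $\sum_{\lambda\in\core(\widetilde{D}_n,b)}\size^2(\lambda)$, and combining this with the first-moment formula of \Cref{thm:expected_size} and the count in \Cref{thm:haiman_number}, via the identity that variance equals the second moment minus the square of the mean, produces the claimed closed form.
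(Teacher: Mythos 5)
Your proposal is essentially the paper's own argument: the paper likewise takes the odd-residue component of the weighted Ehrhart quasipolynomial, harvests zeros from the $n-1$ distinct odd exponents together with $b=1$ and its reciprocity partner $b=-h-1$, evaluates at $b=3$ and $b=5$ by automated summation over compositions as in~\Cref{sec:automation}, passes from $\check{\Lambda}$ to $\check{Q}$ by dividing by $f=4$, and finishes with $\mathbb{V}=\mathbb{E}[X^2]-\mathbb{E}[X]^2$ (the only cosmetic difference being that the paper sums the centered weight $(\zise-\mu)^2$ directly rather than the raw second moment). One small bookkeeping correction: the evaluations $P(3)$ and $P(5)$ by themselves give only $n+3$ conditions on the degree-$(n+4)$ polynomial $P$, so you must also invoke~\Cref{rem:points_for_free} at these points to obtain $P(-h-3)$ and $P(-h-5)$ for free, bringing the total to the $n+5$ conditions needed to determine $P$.
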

\addtocounter{theorem}{-1}
}

\subsection{Automation: \Cref{thm:expected_size,thm:variance} in types $\widetilde{E}_n$}
\label{sec:e_series}


In the exceptional types $\widetilde{E}_6, \widetilde{E}_7,$ and $\widetilde{E}_8$,~\Cref{thm:expected_size,thm:variance} are a finite check, which we accomplish using a similar method as in~\Cref{sec:automation} with the freely available program {\sf Normaliz}~\cite{bruns2001normaliz,bruns2010normaliz}.

Suppose $\Phi$ is a root system of type $E_6$, $E_7$ or $E_8$.
Using the fact that $\Phi$ is simply laced (and therefore $g=h$ and $\rho=\check{\rho}$) we calculate that 
\[
 \sum_{\lambda\in\core(\wa,b)}\size(\lambda)=\sum_{\lambda\in\rat\ac\cap\Q}\zise(\lambda)=\sum_{\lambda\in\rat\ac\cap\Q}\left(\frac{h}{2}\|\lambda\|^2-\rat\langle\lambda,\check{\rho}\rangle+(\rat^2-1)\frac{n(h+1)}{24}\right).\]

Thus our task is to calculate the Euler-Maclaurin quasipolynomials 
$\sum_{\lambda\in\rat\ac\cap\Q}\frac{h}{2}\|\lambda\|^2$ and $\sum_{\lambda\in\rat\ac\cap\Q}\langle\lambda,\check{\rho}\rangle$.
To be able to use {\sf Normaliz} for this task, we need to interpret these sums as sums over $\Z^n$ as follows. 
Let $A=(\langle\check{\alpha}_i,\alpha_j\rangle)_{ij}$ be the Cartan matrix of $\Phi$. Write $\tilde{\alpha}=\sum_{i=1}^nc_i\alpha_i$ and let $c=(c_1,c_2,\ldots,c_n)$ be the row vector of coefficients.
\[
 \sum_{\lambda\in\rat\ac\cap\Q}\frac{h}{2}\|\lambda\|^2=\sum_{\substack{(x_1,x_2,\ldots,x_n)\in\Z^n\\\sum_{i=1}^nx_i\check{\alpha}_i\in\rat\ac}}\frac{h}{2}\left\|\sum_{i=1}^nx_i\check{\alpha}_i\right\|^2
 =\sum_{\substack{x=(x_1,x_2,\ldots,x_n)\in\Z^n\\Ax\geq0\\\rat-c^TAx\geq0}}\frac{h}{2}x^TAx.
\]

Similarly,
\begin{align*}
 \sum_{\lambda\in\rat\ac\cap\Q}\langle\lambda,\check{\rho}\rangle=\sum_{\substack{x=(x_1,x_2,\ldots,x_n)\in\Z^n\\Ax\geq0\\\rat-c^TAx\geq0}}\sum_{i=1}^nx_i.
\end{align*}
For the purposes of using Normaliz, it is helpful to replace the set 
\[\{(x_1,x_2,\ldots,x_n)\in\Z^n:Ax\geq0\text{ and }\rat-c^TAx\geq0\}\]
with
\[\{(x_1,x_2,\ldots,x_n,x_{n+1})\in\Z^{n+1}:Ax\geq0,\text{ }x_{n+1}-c^TAx\geq0\text{ and }\lambda(x)=\rat\},\]
where $\lambda$ is the linear functional on $\R^{n+1}$ defined by $\lambda(x)=x_{n+1}$ for all $x\in\R^{n+1}$.
The linear functional $\lambda$ is thus used as a grading.\\
\\

As an example calculation in type $E_6$, in the visual interface {sf jNormaliz}, we input the matrix
\[ M=\left( \begin{array}{ccccccc}
2&0&-1&0&0&0&0\\
0&2&0&-1&0&0&0\\
-1&0&2&-1&0&0&0\\
0&-1&-1&2&-1&0&0\\
0&0&0&-1&2&-1&0\\
0&0&0&0&-1&2&0\\
0&-1&0&0&0&0&1\\
\end{array} \right)\] 
and the grading $(0,0,0,0,0,0,1)$. Then we use its generalized Ehrhart series functionality with the quadratic polynomial $\frac{h}{2}x^TAx$ to find 
\begin{align*}
 \sum_{\lambda\in\rat\ac\cap\Q}\frac{h}{2}\|\lambda\|^2&=\sum_{\substack{x=(x_1,x_2,\ldots,x_n)\in\Z^n\\Ax\geq0\\\rat-c^TAx\geq0}}\frac{h}{2}x^TAx.
\end{align*}
as a quasipolynomial in $\rat$ with period $6$. Similarly we calculate
\begin{align*}
 \sum_{\lambda\in\rat\ac\cap\Q}\langle\lambda,\check{\rho}\rangle=\sum_{\substack{x=(x_1,x_2,\ldots,x_n)\in\Z^n\\Ax\geq0\\\rat-c^TAx\geq0}}\sum_{i=1}^nx_i.
\end{align*}
as a quasipolynomial in $\rat$ with period $6$. Somewhat miraculously, we find that 
\begin{align*}
 \sum_{\lambda\in\rat\ac\cap\Q}\left(\frac{h}{2}\|\lambda\|^2-\rat\langle\lambda,\check{\rho}\rangle+(\rat^2-1)\frac{n(h+1)}{24}\right)\\
\end{align*}
is a \emph{polynomial} in $\rat$ equal to
\begin{multline*}
 \frac{1}{207360}(\rat-1)(\rat+1)(\rat+4)(\rat+5)(\rat+7)(\rat+8)(\rat+11)(\rat+13)\\
 =\frac{1}{24}n(\rat-1)(\rat+h+1)\times\frac{1}{|W|}\prod_{i=1}^n(\rat+e_i),
\end{multline*}
proving~\Cref{thm:expected_size} in type $\widetilde{E}_6$.

\section{Open Problems}
\label{sec:open_problems}

In this section, we present some open problems and conjectures.  \Cref{sec:integrals} proposes a first step towards finding formulas beyond the third moment.  \Cref{sec:non_simply} then asks if it is possible to extend our theorems to non-simply-laced types.  Finally,~\Cref{sec:comb_models} suggests that existing combinatorics associated to the representation theory of affine Lie algebras might to be harnessed to develop combinatorial models for cores in other types.

\subsection{Higher Moments and Integrals}
\label{sec:integrals}

It is natural to ask about formulas for higher moments, even though we do not believe there are uniform formulas (see~\Cref{rem:no_uniform}).

\begin{openproblem}
Extend~\Cref{thm:max_size,thm:expected_size,thm:variance} to higher moments.
\label{problem:extend_to_higher_moments}
\end{openproblem}

Rather than compute the \emph{entire} Ehrhart quasipolynomial $\ac^{\check{Q}}_{(\sizeshift-\mu)^k}(b)$, we could instead ask for its leading coefficient.  In general, this leading coefficient turns out to be an integral over the polytope.

\begin{theorem}[~\cite{baldoni2011integrate,baldoni2012computation}]
    Fix $\mathcal{P},L,$ and $h$ as in~\Cref{sec:weighted_ehrhart}.  The leading coefficient of the Ehrhart quasipolynomial  $\mathcal{P}^L_h(b)$ does not depend on $b$ and is given by \[ \int_{x \in \mathcal{P}} h(x).\]
\end{theorem}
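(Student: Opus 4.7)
The plan is to prove this by a Riemann-sum argument that extracts the top-degree asymptotics of $\mathcal{P}^L_h(b)$ as $b \to \infty$. First I would decompose $h = \sum_{i=0}^{r} h_i$ into homogeneous components with $\deg h_i = i$. Using the rescaling $x = by$, which bijects $b\mathcal{P} \cap L$ with $\mathcal{P} \cap \tfrac{1}{b}L$, one has
\[
\mathcal{P}^L_h(b) \;=\; \sum_{i=0}^{r} b^{\,i} \sum_{y \in \mathcal{P} \cap \frac{1}{b}L} h_i(y),
\]
so only the $i=r$ term can possibly contribute to the coefficient of $b^{n+r}$. This reduces the problem to the case where $h$ is itself homogeneous of degree $r$, in which case $h(by) = b^{r} h(y)$ and the integrand $h_r$ of the conclusion coincides with $h$ itself.

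Second, I would apply the standard Riemann-sum comparison for continuous functions on rational polytopes: for any continuous $g : \mathcal{P} \to \mathbb{R}$,
\[
\frac{1}{b^{\,n}}\sum_{y \in \mathcal{P}\cap\frac{1}{b}L} g(y) \;\longrightarrow\; \frac{1}{\operatorname{covol}(L)}\int_{\mathcal{P}} g(y)\,dy \qquad \text{as } b \to \infty.
\]
This is a weighted refinement of the classical asymptotic $|b\mathcal{P}\cap L| \sim b^{n}\operatorname{vol}(\mathcal{P})/\operatorname{covol}(L)$. Applied to $g = h_r$ it gives
\[
\mathcal{P}^L_{h_r}(b) \;=\; b^{\,n+r}\cdot\frac{1}{\operatorname{covol}(L)}\int_{\mathcal{P}} h_r(y)\,dy \;+\; O\!\left(b^{\,n+r-1}\right),
\]
so the coefficient of $b^{n+r}$ in every constituent polynomial of the quasipolynomial $\mathcal{P}^L_h(b)$ equals $\operatorname{covol}(L)^{-1}\int_{\mathcal{P}} h_r(y)\,dy$. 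In particular, this leading coefficient is the same across all residue classes of $b$ modulo the period supplied by Theorem~\ref{thm:generized_ehrhart}, as claimed.

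The main technical obstacle is making the Riemann-sum error estimate genuinely $O(b^{n+r-1})$ and uniform in the residue class of $b$, so that it cannot contaminate the leading coefficient. I would handle this by partitioning $\mathcal{P}$ into translates of a fundamental domain for $\tfrac{1}{b}L$ (each of diameter $O(1/b)$), bounding $|h_r(y)-h_r(y')|$ on each cell by the Lipschitz constant of $h_r$ on the compact polytope $\mathcal{P}$ times $O(1/b)$, and summing; the $O(b^{n-1})$ cells meeting the boundary $\partial\mathcal{P}$ are absorbed into the error using the standard Ehrhart quasipolynomiality of $|b\mathcal{P}\cap L|$. Once this is established, quasipolynomiality from Theorem~\ref{thm:generized_ehrhart} upgrades the asymptotic equality to an equality of coefficients, completing the proof.
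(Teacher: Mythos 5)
Your argument is correct, but it takes a genuinely different route from the paper, which offers no proof at all: the statement is quoted from Baldoni--Berline--De~Loera--K\"oppe--Vergne, where the highest coefficients of weighted Ehrhart quasipolynomials are obtained by Euler--Maclaurin-type valuation machinery that moreover computes several top coefficients effectively. Your proof is the elementary, self-contained alternative: split $h$ into homogeneous parts, discard the lower ones via the degree bound in \Cref{thm:generized_ehrhart}, and then pin down the coefficient of $b^{n+r}$ in each constituent polynomial by a Riemann-sum asymptotic along the arithmetic progression $b\equiv j\ (\operatorname{mod} T)$, using that a polynomial's leading coefficient is determined by its growth along any infinite sequence. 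Two remarks. First, you only need the error to be $o(b^{n+r})$, and uniformity in the residue class is irrelevant (there are finitely many classes), so the Lipschitz and boundary-cell estimates in your last paragraph, while fine, are more than is required: plain convergence of the Riemann sums of the continuous function $h_r$ over the Jordan-measurable polytope $\mathcal{P}$ already suffices once quasipolynomiality is granted. Second, your reduction makes explicit a point the paper's wording glosses over: the leading coefficient is $\operatorname{covol}(L)^{-1}\int_{\mathcal{P}}h_r$, where $h_r$ is the top homogeneous component, and this equals $\int_{\mathcal{P}}h$ (with Lebesgue measure normalized to the lattice) only when $h$ is homogeneous; read literally for non-homogeneous $h$ the displayed formula is false, and your version is the correct one, consistent with the cited sources and with how the paper applies the result. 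In short, your approach buys transparency at the cost of giving only the single top coefficient, while the cited machinery gives more refined information than this theorem records.
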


For example, the leading coefficients of~\Cref{thm:expected_size,thm:variance} give the following formulae for the integral of the quadratic form $\size$.  Here we have normalized so that $\mathrm{Vol}(b\ac) = 1$.

\begin{corollary}  For $\gcd(h,b)=1$,
\begin{align*}
    \int_{x \in \Sommers_\Phi(b)} \size(x) &= \frac{nb^2}{24}, \text{ and}\\
    \int_{x \in \Sommers_\Phi(b)} \left(\size(x)-\frac{nb^2}{24}\right)^2 &= \frac{nhb^4}{1440}.\\
\end{align*}
\end{corollary}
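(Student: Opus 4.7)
The plan is to derive both identities from~\Cref{thm:expected_size,thm:variance} by reading off leading coefficients in $b$, using the Baldoni-Berline-de Loera-K\"oppe-Vergne result recalled in~Section~\ref{sec:weighted_ehrhart} that identifies the leading Ehrhart coefficient of a weighted lattice-point enumerator with an integral over the polytope.

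First I would use the rigid motion $\wf$ from~\Cref{thm:wf} and~\Cref{cor:size_on_ba} to transfer the integrals from $\Sommers_\Phi(b)$ to $b\ac$, replacing $\size$ by $\sizeshift$. Combining~\Cref{thm:expected_size} with Haiman's formula (\Cref{thm:haiman_number}) then yields the polynomial identity
\[\sum_{\lambda \in \core(\widetilde{W},b)} \size(\lambda) \;=\; \frac{n}{24|W|}\,b^{n+2} \;+\; O(b^{n+1}),\]
and by~\Cref{thm:generized_ehrhart} the leading coefficient of this quasipolynomial computes, up to a covolume factor for $\check{Q}$, the integral $\int_{\Sommers_\Phi(b)} \size(x)\,dx$. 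Using the identity $\mathrm{Vol}(\ac) = \mathrm{covol}(\check{Q})/|W|$ (obtained by comparing the leading terms of~\Cref{thm:haiman_number} with the ordinary Ehrhart volume formula) to translate back, and then normalizing so that $\mathrm{Vol}(b\ac) = 1$, one arrives at $\frac{nb^2}{24}$ on the right-hand side.

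The second identity follows from the same recipe applied to~\Cref{thm:variance} with weight $f = (\sizeshift - \mu)^2$, together with the observation that the mean $\mu = \frac{n(b-1)(b+h+1)}{24}$ agrees with $\frac{nb^2}{24}$ to leading order in $b$, so the substitution $\mu \mapsto \frac{nb^2}{24}$ does not affect the top $b^{n+4}$ coefficient; one then reads off $\frac{nhb^4}{1440}$ from the leading $b^{n+4}$ coefficient of $\frac{1}{|W|}\prod_i(b+e_i)\cdot\frac{nhb(b-1)(h+b)(h+b+1)}{1440}$.

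The main subtlety is that $\sizeshift$ depends on $b$ through the translation built into $\wf$, so~\Cref{thm:generized_ehrhart} does not strictly apply with a single fixed weight. I would handle this by substituting $x = b y$ in the integral, which expresses $\sizeshift(b y)$ as a polynomial in $y \in \ac$ whose coefficients are themselves polynomials in $b$ (\Cref{thm:simple_zise} makes this manifest in the simply-laced case), and then applying the weighted Ehrhart theorem coefficient-by-coefficient. Equivalently, one can bypass~\Cref{thm:generized_ehrhart} entirely and argue from a direct Riemann-sum estimate, whose error term is insensitive to the $b$-dependence because the gradient of $\sizeshift$ on $b\ac$ grows only as $O(b)$.
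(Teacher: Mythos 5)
Your proposal is correct and follows essentially the same route as the paper: the corollary is obtained by reading off the leading coefficients of \Cref{thm:expected_size,thm:variance} (multiplied by the count in \Cref{thm:haiman_number}), identifying them with integrals via the weighted Ehrhart leading-coefficient theorem, and normalizing by the volume of $b\ac$. Your extra care about the $b$-dependence of $\zise$ (substituting $x=by$ so the weight becomes a fixed polynomial with $b$-polynomial coefficients) is a reasonable way to make rigorous what the paper leaves implicit, but it is not a different argument.
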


Although~\Cref{rem:no_uniform} suggests that there is no uniform formula for the third moment, we conjecture that the leading coefficient of the third moment \emph{does} have a uniform formula.

\begin{conjecture}
For $\gcd(h,b)=1$, we have the following uniform integral. \[\int_{x \in \Sommers_\Phi(b)} \left(\size(x) - \frac{nb^2}{24}\right)^3 = \frac{nhb^6}{60480}(2(h-1)-1).\]
\end{conjecture}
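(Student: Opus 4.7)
The plan is to follow the template of~\Cref{sec:proofs}, using the Baldoni--Berline--Vergne interpretation of the leading coefficient of an Ehrhart quasipolynomial as an integral over the polytope. First I would use the rigid motion $\wf$ of~\Cref{thm:wf} to rewrite
\[\int_{\Sommers_\Phi(b)}\bigl(\size(x)-\tfrac{nb^2}{24}\bigr)^3\,dx\;=\;\int_{b\ac}\bigl(\zise(x)-\tfrac{nb^2}{24}\bigr)^3\,dx.\]
In the simply-laced case~\Cref{thm:simple_zise} gives $\zise(x)=\tfrac{h}{2}\|x-b\check{\rho}/h\|^{2}-\tfrac{n(h+1)}{24}$. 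Substituting $x=by$ with $y\in\ac$ and dividing by $\mathrm{Vol}(b\ac)$ presents the normalized integral as a polynomial in $b$ of degree six whose leading coefficient is
\[\mathbb{E}_{y\in\ac}\bigl[A(y)^{3}\bigr],\qquad A(y):=\tfrac{h}{2}\|y-\check{\rho}/h\|^{2}-\tfrac{n}{24}.\]
So the conjecture is equivalent to the uniform identity $\mathbb{E}_{\ac}[A^{3}]=\tfrac{nh(2h-3)}{60480}$.

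Expanding the cube expresses $\mathbb{E}[A^3]$ as a linear combination of the three moments $m_{k}:=\mathbb{E}_{\ac}\|y-\check{\rho}/h\|^{2k}$. The first two are already determined by matching the leading $b^2$ and $b^4$ coefficients of~\Cref{thm:expected_size,thm:variance}, which force
\[m_{1}=\tfrac{n}{12h},\qquad m_{2}=\tfrac{n(5n+2h)}{720\,h^{2}},\]
and the conjecture then reduces to the single explicit identity
\[m_{3}\;=\;\frac{n\bigl(16h^{2}+42hn+35n^{2}-24h\bigr)}{60480\,h^{3}}.\]

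To attack $m_3$ I would use barycentric coordinates $y=\sum_{i=0}^{n}\lambda_{i}v_{i}$ on $\ac$, where $v_{0}:=0$ and $v_{i}:=\cw_{i}/c_{i}$ are the vertices of the fundamental alcove. From~\Cref{uniquerhoh} one has $\check{\rho}/h=\sum_{i}(c_{i}/h)v_{i}$, so that
\[\|y-\check{\rho}/h\|^{6}=\Bigl(\sum_{i,j}(\lambda_{i}-c_{i}/h)(\lambda_{j}-c_{j}/h)\,G_{ij}\Bigr)^{\!3},\qquad G_{ij}:=\langle v_{i},v_{j}\rangle,\]
a polynomial in the $\lambda_{i}$. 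The Dirichlet simplex integration formula $\int_{\ac}\prod\lambda_{i}^{a_i}\,dy=\mathrm{Vol}(\ac)\cdot n!\prod a_i!/(n+\sum a_i)!$ then writes $m_3$ explicitly as a type-dependent sum of triple products of Gram-matrix entries weighted by monomials in the marks $c_i$.

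The hard part will be evaluating this sum and showing it collapses to the claimed closed form. As a concrete path, in type $\widetilde{A}_{n-1}$ the identity can be read off the leading $b^{6}$ coefficient of~\Cref{thm:us_third_moment} (after accounting for the discrepancy between the lattice mean $\mu$ and the integral centering $nb^{2}/24$), and in types $\widetilde{D}_{n}$, $\widetilde{E}_{6}$, $\widetilde{E}_{7}$, and $\widetilde{E}_{8}$ the automation of~\Cref{sec:automation,sec:e_series} can be extended to third moments. A uniform simply-laced proof would require a new identity among triple products of inverse Cartan-matrix entries and the marks, which appears considerably more subtle than those implicitly underpinning the first- and second-moment formulas; in view of~\Cref{rem:no_uniform}, the non-simply-laced case likely demands a separate argument even though the conjectured answer is stated uniformly.
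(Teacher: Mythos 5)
The statement you are addressing is one of the paper's open conjectures: the authors supply no proof, only the general principle (\Cref{thm:generized_ehrhart} together with the cited integration results) that the leading coefficient of a weighted Ehrhart quasipolynomial is an integral over the polytope, plus computational evidence. So there is no argument in the paper to compare against, and judged on its own terms your proposal is a correct \emph{reduction} rather than a proof. Its positive content checks out: in the simply-laced case, transporting the integral to $b\ac$ via $\wf$, rescaling $x=by$, and invoking \Cref{thm:simple_zise} turns the (leading-coefficient) statement into $\mathbb{E}_{y\in\ac}\bigl[A(y)^3\bigr]=\tfrac{nh(2h-3)}{60480}$ with $A(y)=\tfrac{h}{2}\|y-\check\rho/h\|^2-\tfrac{n}{24}$; your values $m_1=\tfrac{n}{12h}$ and $m_2=\tfrac{n(5n+2h)}{720h^2}$ do follow from the leading coefficients of \Cref{thm:expected_size,thm:variance}; the conjecture is then equivalent to $m_3=\tfrac{n(16h^2+42hn+35n^2-24h)}{60480h^3}$ (I verified this algebra); and the $b^6$ coefficient of \Cref{thm:us_third_moment} confirms the identity in type $\widetilde{A}$, the shift between $\mu$ and $nb^2/24$ entering only at order $b^5$. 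One caveat: the equality in the conjecture must be read, as the authors themselves do when defining $\mathrm{top}_\Phi^b(i)$, as a statement about the leading coefficient --- the exact normalized integral has nonvanishing lower-order terms in $b$ coming from the constant $-\tfrac{n(h+1)}{24}$, so your word ``equivalent'' is correct only under that reading.

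The genuine gap is that the step carrying all the content --- a uniform evaluation of $m_3$, i.e.\ of the Gram-matrix/Dirichlet sum you set up in barycentric coordinates --- is precisely what is open, and your proposal stops at formulating it; you say so yourself. Extending the {\sf Mathematica} and {\sf Normaliz} computations of \Cref{sec:automation,sec:e_series} to third moments would verify the identity in types $\widetilde{D}_n$ and $\widetilde{E}_6,\widetilde{E}_7,\widetilde{E}_8$ (and is presumably how the conjecture was found), but a case check is not the uniform argument your outline promises, and \Cref{rem:no_uniform} warns that no uniform closed form for the full third moment is available to lean on. Finally, the conjecture carries no simply-laced hypothesis while your reduction does: \Cref{thm:simple_zise} fails otherwise. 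This is not fatal at the level of leading coefficients --- after applying $\wf$ the top-degree part of $\zise(by)$ is still $\tfrac{g}{2}b^2\|y-\check\rho/h\|^2$, the terms involving $\check\rho/h-\rho/g$ being of lower order in $b$ --- but then $g$ replaces $h$ on the left while $h$ appears on the right, the inputs $m_1,m_2$ are no longer supplied by \Cref{thm:expected_size,thm:variance} (proved only for simply-laced types), and a genuinely separate analysis is needed, as you acknowledge.
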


Computational evidence suggests that even this leading coefficient lacks a uniform formula beyond the third moment.  We record a few of these leading coefficients here.

\begin{conjecture}
Let the leading coefficient of $\ac^{\check{Q}}_{(\sizeshift-\mu)^k}(b)$ be denoted by \[\mathrm{top}_\Phi^b(i):= \int_{x \in \Sommers_{\Phi}(b)} \left(\size(x) - \frac{nb^2}{24}\right)^i.\]

In type $\widetilde{A}_n$,
\begin{align*}
     \mathrm{top}_\Phi^b(4) &= \frac{nhb^8}{4838400} \, {\left(19 \, n^{2} - 13 \, n + 4\right)}, \\
    \mathrm{top}_\Phi^b(5) &=  \frac{nhb^{10}}{95800320} \, {\left(23 \, n^2 - 25 \, n + 12\right)} {\left(2
\, n - 1\right)},\\
    \mathrm{top}_\Phi^b(6) &= {\scriptstyle \frac{nhb^{12}}{4184557977600} \, {\left(307561 \, n^{4} - 826062 \, n^{3} +
1048509 \, n^{2} - 647948 \, n + 155040\right)}}, \text{ and}\\
\mathrm{top}_\Phi^b(7) &= {\scriptstyle
\frac{nhb^{14}}{1195587993600} \, {\left(15562 \, n^{5} - 64721 \, n^{4} +
129288 \, n^{3} - 142241 \, n^{2} + 82300 \, n - 19488\right)}}.
\end{align*}
    
In type $\widetilde{D}_n$,
\begin{align*}
\mathrm{top}_\Phi^b(4) &= \frac{nhb^8}{2419200}(31n^2-99n+86),\\
\mathrm{top}_\Phi^b(5) &=        	
\frac{nhb^{10}}{23950080} \, {\left(70 \, n^{3} - 365 \, n^{2} + 667 \, n -
426\right)}, \text{ and}\\
\mathrm{top}_\Phi^b(6) &=
{\scriptstyle \frac{nhb^{12}}{523069747200} \, {\left(859445 \, n^{4} - 6449250 \, n^{3} +
19050243 \, n^{2} - 26075294 \, n + 13852536\right)}}.
\end{align*}
\end{conjecture}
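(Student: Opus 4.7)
The plan is to reduce each leading coefficient $\mathrm{top}_\Phi^b(i)$ to an explicit integral over the simplex $b\ac$ and to evaluate that integral using classical Dirichlet moment formulas.  By~\Cref{thm:wf} the rigid motion $\wf$ sends $\Sommers_\Phi(b)$ isometrically onto $b\ac$ and carries $\size$ to $\zise$, so after normalizing $\mathrm{Vol}(b\ac)=1$ one has
\[\mathrm{top}_\Phi^b(i)=\int_{y\in b\ac}\left(\zise(y)-\tfrac{nb^2}{24}\right)^i\,dy.\]
By~\Cref{thm:simple_zise}, $\zise(y)=\frac{h}{2}\|y-b\rho/h\|^2-\frac{n(h+1)}{24}$ in every simply-laced type, so the integrand is a polynomial in $y$ of degree $2i$ concentrated near the centroid $y=b\rho/h$.

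The fundamental alcove is a simplex with vertices $\Gamma=\{0\}\cup\{\cw_j/c_j\}$, so a uniformly distributed point $Y\in b\ac$ can be written as $Y=b\sum_{j=0}^{n} T_j v_j$ with $v_j\in\Gamma$ and $(T_0,\dots,T_n)$ uniform on the standard $n$-simplex. The classical moment identity
\[
\mathbb{E}\bigl[T_0^{a_0}T_1^{a_1}\cdots T_n^{a_n}\bigr]=\frac{n!\,a_0!\,a_1!\cdots a_n!}{(n+a_0+a_1+\cdots+a_n)!}
\]
together with the explicit entries of $C_\Phi^{-1}$ from~\Cref{prop:inv_of_cartan} expresses the integral as $b^{2i}$ times a finite multisum $\sum P(j_1,\dots,j_{2i})$, whose summand is a rational function of the coweight indices built from the inverse Cartan entries.

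The main obstacle is showing that this multisum collapses to the conjectured polynomial factor in $n$.  In type $\widetilde{A}_n$, where all $c_j=1$ and $C^{-1}$ has entries that are piecewise polynomial functions of the indices $(j_1,j_2)$, the resulting nested sum becomes a multiple power sum evaluable in closed form by iterated Faulhaber identities, yielding a polynomial in $n$ of degree at most $2i+1$; matching this polynomial against the conjectured expression then reduces to a finite interpolation check.  In type $\widetilde{D}_n$ the coefficients $c_j$ are unequal---being $1$ for $j\in\{0,1,n-1,n\}$ and $2$ otherwise---so the Dirichlet parameters at these four distinguished vertices differ from the rest, and one must further decompose the sum according to whether the indices fall in $\{1,\dots,n-2\}$ or $\{n-1,n\}$, since the entries of $C^{-1}$ are given by different formulas in these regimes.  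In both types the final step is combinatorial consolidation; the verification would be automated using the code described in~\Cref{sec:automation}, carried out to sufficiently many ranks $n$ to pin down the degree-$(2i{+}1)$ polynomial uniquely.
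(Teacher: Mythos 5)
First, a point of comparison: the statement you are addressing is recorded in the paper only as a conjecture --- the authors say explicitly that it rests on computational evidence, and no proof is given. So your proposal is not to be measured against a proof in the paper; it is a plan for supplying one. The reduction you describe is sound as far as it goes: by \Cref{thm:wf} the element $\wf$ is an isometry carrying $\Sommers_\Phi(b)$ onto $b\ac$ and $\size$ to $\zise$; by \Cref{thm:simple_zise} $\zise$ is an explicit quadratic form centered at $b\rho/h$ in the simply-laced case; and writing a uniform point of the simplex $b\ac$ in barycentric coordinates, the Dirichlet moment formula together with the inverse Cartan entries of \Cref{prop:inv_of_cartan} does reduce each $\mathrm{top}_\Phi^b(i)$ to an explicit finite multisum depending on the rank $n$ --- though only up to terms of order $b^{2i-2}$, coming from the constant $n(h+1)/24$ in $\zise$ and from centering at $nb^2/24$ rather than at the exact mean; you should say this, since you write the quantity as exactly $b^{2i}$ times a sum, and the harmlessness of these discrepancies (they only affect lower-order coefficients) is part of the argument.

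The genuine gap is in the last step. You never evaluate the multisums; you assert that in type $\widetilde{A}_n$ they ``collapse by iterated Faulhaber identities to a polynomial in $n$ of degree at most $2i+1$,'' and you propose to finish by computing finitely many ranks $n$ and interpolating. But the Dirichlet expansion produces denominators $(n+1)(n+2)\cdots(n+2i)$ from the moment formula and powers of $h$ (that is, of $n+1$ or $2n-2$) from $C_\Phi^{-1}$ and from $\rho/h$, so a priori the answer is a rational function of $n$; that it is a polynomial of bounded degree is precisely the nontrivial cancellation your interpolation argument presupposes. Without either (a) carrying out the symbolic summation in closed form for general $n$ --- which your framework permits but which you do not do in either type, and which in type $\widetilde{D}_n$ additionally requires the case analysis over the four vertices with mark $1$ that you only gesture at --- or (b) proving an a priori polynomiality or degree bound (or a rational-function degree bound, with correspondingly more sample ranks), the ``finite interpolation check'' does not establish the identities; it amounts to the same kind of computational evidence the authors already cite when stating the conjecture. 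As written, then, the proposal is a reasonable strategy outline rather than a proof: the decisive computations, and the justification that finitely many ranks determine the answer, are missing.
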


We do not even have a conjecture for the denominators of these expressions, although D.~Armstrong has suggested a connection to the Dirichlet $\eta$ function~\cite{drewprivate}.

\subsection{Non-Simply-Laced Types}
\label{sec:non_simply}

It is reasonable to ask for analogues of our results in non-simply-laced types.

\begin{openproblem}
Extend~\Cref{thm:max_size,thm:expected_size,thm:variance} to non-simply-laced types.
\label{problem:extend}
\end{openproblem}

To whet the reader's appetite, we offer a conjecture for the Fu{\ss}-Catalan case $b=mh+1$ in type $\widetilde{C}_n$, for which our open problem seems to be low-hanging fruit (see~\Cref{rem:size_not_number}).

\begin{conjecture}
\[\Expt{\lambda \in \core(\widetilde{C}_n,mh+1)}{\size(\lambda)} =
\frac{m n (2(m+1)n^2+(m+3)n-(m+1))}{12}.\]
\end{conjecture}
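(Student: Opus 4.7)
The Fuss case $b = mh+1$ in type $\widetilde{C}_n$ is especially clean because $t = m$, $r = 1$, and the only root of height $h-1 = 2n-1$ is the highest root $\tilde{\alpha} = 2e_1$; hence
\[\Sommers_{\Phi}(mh+1) = \{x : \langle x, \alpha_i \rangle \geq -m \text{ for all simple } \alpha_i,\ \langle x, \tilde{\alpha}\rangle \leq m+1\}.\]
Since $\langle \check{\rho}, \alpha_i\rangle = 1$ and $\langle \check{\rho}, \tilde{\alpha}\rangle = h-1$, the translation $x \mapsto x + m\check{\rho}$ carries $\Sommers_{\Phi}(mh+1)$ bijectively onto $b\ac$, and so
\[\sum_{\lambda \in \core(\widetilde{C}_n, b)} \size(\lambda) = \sum_{\mu \in b\ac \cap (m\check{\rho} + \check{Q})} \size(\mu - m\check{\rho}).\]
The plan is to avoid routing through the element $\wf$ of \Cref{thm:wf} and to work with this translation directly: it need not lie in $\wa$ (when $m$ is odd and $m\check{\rho} \notin \check{Q}$), but it still provides the desired bijection of lattice points.

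In coweight coordinates $(c_1,\ldots,c_n)$, $b\ac$ is the simplex $\{c_i \geq 0,\ 2(c_1+\cdots+c_{n-1})+c_n \leq mh+1\}$ (since $\tilde{\alpha}$ has coefficients $2,2,\ldots,2,1$ in type $C_n$). The coset $m\check{\rho}+\check{Q}$ is cut out by $c_n \equiv m \pmod 2$, because $\check{\rho}=(1,\ldots,1)$ in coweight coordinates and $\check{\Lambda}/\check{Q}\cong\mathbb{Z}/2$ is detected by $c_n \bmod 2$. Substituting $c_n=2d+\varepsilon$ with $\varepsilon = m \bmod 2$ eliminates the parity constraint and turns the domain into the standard simplex
\[\{(c_1,\ldots,c_{n-1},d)\in\mathbb{Z}_{\geq 0}^n : c_1+\cdots+c_{n-1}+d \leq mn\},\]
which has $\binom{(m+1)n}{n}$ lattice points---a reassuring match with Haiman's count from \Cref{thm:haiman_number}. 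Crucially, the two parities of $m$ produce the \emph{same} simplex, so there is no residual case split at the level of the summation domain.

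It then remains to expand $\size(\mu - m\check{\rho}) = \tfrac{g}{2}\|\mu\|^2 - gm\langle\mu,\check{\rho}\rangle - \langle\mu,\rho\rangle + (\text{const in }m)$ as a degree-$2$ polynomial in $(c_1,\ldots,c_{n-1},d)$ using the inner products $\langle\check{\omega}_i,\check{\omega}_j\rangle$ for type $C_n$ (read off from the inverse Cartan matrix), and to sum this polynomial over the standard simplex using the classical identities
\[\sum_{\sum x_i\leq N} x_1 = \binom{N+n}{n+1},\quad \sum x_1^2 = \binom{N+n+1}{n+2}+\binom{N+n}{n+2},\quad \sum x_1 x_2 = \binom{N+n}{n+2},\]
evaluated at $N=mn$. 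Dividing by $\binom{(m+1)n}{n}$ and simplifying should yield the stated expected value.

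The main obstacle is the bookkeeping in the polynomial expansion: one must keep careful track of the distinction between $\rho$ and $\check{\rho}$ (since $\widetilde{C}_n$ is non-simply-laced and $g = n+1 \neq h = 2n$, so the clean simplification of \Cref{thm:simple_zise} is unavailable), and handle $\langle \check{\omega}_n, \check{\omega}_j\rangle$ separately from $\langle \check{\omega}_i, \check{\omega}_j\rangle$ for $i,j<n$ due to the short-root asymmetry. The case split $m$ even versus odd enters only through $\varepsilon$; I expect, following the pattern of \Cref{sec:type_a_variance,sec:type_d_expected}, that the two cases will miraculously collapse to the same final formula, and that the algebraic simplification at the end will be the only place any real computation is needed.
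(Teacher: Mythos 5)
First, a point of comparison: the paper does \emph{not} prove this statement.  It is offered explicitly as a conjecture under the open problem of extending \Cref{thm:max_size,thm:expected_size,thm:variance} to non-simply-laced types, so there is no proof in the paper to measure your argument against; your proposal has to stand on its own, and as written it does not.  To be clear about what is right: for $b=mh+1$ you correctly get $t=m$, $r=1$, $\Phi_{h-1}=\{\tilde{\alpha}\}$, and the description of $\Sommers_\Phi(mh+1)$; since $\langle\check{\rho},\alpha_i\rangle=1$ and $\langle\check{\rho},\tilde{\alpha}\rangle=h-1$, the translation $x\mapsto x+m\check{\rho}$ does carry $\Sommers_\Phi(mh+1)$ onto $(mh+1)\ac$ and carries $\Q$ onto the coset $m\check{\rho}+\Q$; the coweight-coordinate description of $b\ac$ with marks $2,\dots,2,1$, the fact that membership in $\Q\subseteq\cwl$ is detected by the parity of $c_n$, the substitution $c_n=2d+\varepsilon$, and the resulting count $\binom{(m+1)n}{n}$ agreeing with \Cref{thm:haiman_number} are all correct.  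This is a sensible Fu{\ss}-case shortcut around $\wf$ of \Cref{thm:wf}, and it is uniform in $m$ because every simplex sum you need is a polynomial in $N=mn$.

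The gap is that the argument stops exactly where the content of the conjecture begins: you never expand $\size(\mu-m\check{\rho})$ in the coordinates $(c_1,\dots,c_{n-1},d)$, never evaluate the resulting sums, and never verify that the outcome equals $\frac{mn\,(2(m+1)n^2+(m+3)n-(m+1))}{12}$; ``should yield'' and ``I expect \dots will miraculously collapse'' carry all the weight.  Two concrete places where the unexecuted bookkeeping can silently go wrong: (i) the term you label ``const'' equals $\frac{g}{2}m^2\|\check{\rho}\|^2+m\langle\rho,\check{\rho}\rangle$ --- constant in $\mu$ but quadratic in $m$ --- and it contributes $\binom{(m+1)n}{n}$ times itself to the total, so it cannot be discarded; (ii) under the normalization $\|\tilde{\alpha}\|^2=2$ the Gram matrix you need is not the inverse Cartan matrix but its column rescaling, $\langle\check{\omega}_i,\check{\omega}_j\rangle=(A^{-1})_{ij}\cdot\frac{2}{\|\alpha_j\|^2}$ where $A=(\langle\check{\alpha}_i,\alpha_j\rangle)$ (already for $C_2$ this Gram matrix is $\left(\begin{smallmatrix}2&1\\1&1\end{smallmatrix}\right)$ while $A^{-1}$ is not even symmetric), and similarly $\langle\check{\omega}_i,\rho\rangle$ must be taken as the coefficient of $\alpha_i$ in $\rho$, with $\rho\neq\check{\rho}$ since $g=n+1\neq h=2n$.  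Since nothing is computed, the proposal neither proves nor even confirms the conjectured formula beyond what one can cross-check in the degenerate case $n=1$ (where it agrees with \Cref{thm:johnson_expected_size} for $a=2$).  Your simplex identities are correct and do suffice to finish the calculation; carrying it out would actually settle the Fu{\ss} case of the paper's open problem, but as it stands this is a plan for a proof, not a proof.
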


\subsection{Basic Representations and Combinatorial Models}
\label{sec:comb_models}

In this section, we suggest that researchers interested in extending combinatorial models of cores to other types might benefit from existing combinatorial models arising in the representation theory of affine Lie algebras.

Fix an affine Lie algebra $\mathfrak{g}$.  The highest weight module $L(\Lambda_0)$ is the \defn{basic representation} of $\mathfrak{g}$.  We refer the reader to~\cite{kac1994infinite} for further details.  The module $L(\Lambda_0)$ has an associated crystal $\mathcal{B}(\Lambda_0)$, which is an infinite directed graph with a unique source (the highest weight), and with edges labeled by simple affine roots $\widetilde{\simp}$.  For $\alpha \in \widetilde{\simp}$, an \defn{$\alpha$-string} is a maximal connected chain of $\mathcal{B}(\Lambda_0)$ whose edges are all labeled by $\alpha$.  There is a $\wa$-action on the vertices of $B(\Lambda_0)$, where $s_{\alpha}$ acts by reversing all $\alpha$-strings.

\begin{theorem}[\cite{kac1994infinite}]
  The $\wa$-orbit of the highest weight in $\mathcal{B}(\Lambda_0)$ is in $\wa$-equivariant bijection with the coroot lattice $\check{Q}$.
  \label{thm:wa_orbit_is_weights}
\end{theorem}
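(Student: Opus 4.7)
The plan is to unpack the $\widetilde{W}$-action on the crystal $\mathcal{B}(\Lambda_0)$ in terms of the semidirect product structure $\widetilde{W} = W \ltimes \check{Q}$ recalled in Section~\ref{sec:hyp_and_weyl}, and then identify the orbit of the highest weight with $\check{Q}$ by computing the stabilizer of $\Lambda_0$. Throughout, I would work at the level of weights and use the fact that the extremal weights of $L(\Lambda_0)$ (those in the Weyl orbit of $\Lambda_0$) occur with multiplicity one, so that vertices of $\mathcal{B}(\Lambda_0)$ in the $\widetilde{W}$-orbit of the highest weight vertex are determined by their weights.

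First, I would record the basic facts about $\Lambda_0$: it is the affine fundamental weight dual to the affine simple coroot, so $\langle \Lambda_0,\check{\alpha}_i\rangle = \delta_{i,0}$ for all $i$. In particular, $\Lambda_0$ is fixed by every simple reflection $s_{\alpha_i}$ with $i \geq 1$, and hence by the entire finite Weyl group $W$. Thus $W \subseteq \mathrm{Stab}_{\widetilde{W}}(\Lambda_0)$.

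Next, I would compute the action of a translation $t_\mu$ with $\mu \in \check{Q}$ on $\Lambda_0$. The standard formula for the level-one action (see, e.g.,~\cite{kac1994infinite}) gives
\[
 t_\mu(\Lambda_0) \;=\; \Lambda_0 + \mu - \tfrac{1}{2}\|\mu\|^2\,\delta ,
\]
so the projection of $t_\mu(\Lambda_0)$ to $V$ (modulo $\delta$) is $\mu$ itself. Consequently, no nonzero translation fixes $\Lambda_0$, and combining with the previous paragraph we conclude that $\mathrm{Stab}_{\widetilde{W}}(\Lambda_0) = W$. Using $\widetilde{W} = W \ltimes \check{Q}$, the map $\widetilde{W}/W \to \check{Q}$ sending $wt_\mu W \mapsto \mu$ is a bijection, and composing with $w \mapsto w\Lambda_0$ produces a $\widetilde{W}$-equivariant bijection between the orbit $\widetilde{W}\Lambda_0$ and $\check{Q}$ (matching the bijection $\bijQtoW$ of Proposition~\ref{def:bijQtoW} up to the shift by $\Lambda_0$ and a $\delta$-coefficient).

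Finally, I would lift this statement from the weight lattice to the crystal $\mathcal{B}(\Lambda_0)$. By general theory, the extremal weight vectors of $L(\Lambda_0)$ (those obtained from the highest weight vector by successively applying Weyl group elements via string reversals) span one-dimensional weight spaces, and the crystal action of $\widetilde{W}$ on the highest-weight vertex tracks precisely the Weyl-group action on $\Lambda_0$. Therefore the $\widetilde{W}$-orbit of the highest weight vertex in $\mathcal{B}(\Lambda_0)$ is in $\widetilde{W}$-equivariant bijection with $\widetilde{W}\Lambda_0$, and hence with $\check{Q}$. The main obstacle is really bookkeeping: verifying that the crystal-theoretic Weyl group action (by string reversals) agrees with the classical Weyl action on weights at the extremal vertices, and that the identification of the stabilizer as $W$ is unaffected by the $\delta$-component. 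Once these standard facts from~\cite{kac1994infinite} are in place, the bijection and equivariance follow directly.
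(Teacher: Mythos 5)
The paper does not actually prove this statement---it is imported verbatim from Kac's book \cite{kac1994infinite}---so there is no in-paper argument to measure yours against; what you have written is the standard proof, and it is essentially correct. Two bookkeeping points are worth tightening. First, the assignment $w t_\mu W \mapsto \mu$ is not well defined on cosets, since $w t_\mu = t_{w(\mu)} w$ and hence $w t_\mu W = t_{w(\mu)} W$; the clean statement is that $\waf W \mapsto \waf(0)$ (exactly the map $\bijQtoW$ of Proposition~\ref{def:bijQtoW}) is a bijection $\wa / W \to \check{Q}$, and the equivariance one wants is with respect to the affine action of $\wa$ on $V$. Second, the stabilizer computation should be phrased for a general element rather than separately for pure translations and pure finite Weyl group elements: writing $\waf = t_\mu w$ and using that $w$ fixes $\Lambda_0$, the level-one formula gives $\waf(\Lambda_0) = t_\mu(\Lambda_0) = \Lambda_0 + \mu - \tfrac{1}{2}\|\mu\|^2 \delta$, so the classical projection of $\waf(\Lambda_0) - \Lambda_0$ is $\mu = \waf(0)$; this single computation shows at once that the stabilizer of $\Lambda_0$ is exactly $W$, that the orbit $\wa \Lambda_0$ is in bijection with $\check{Q}$, and that this bijection intertwines the linear action on level-one weights with the affine action of $\wa$ on $V$. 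Your last step---multiplicity one of the extremal weights of $L(\Lambda_0)$ together with the fact that the string-reversal action of $\wa$ on $\mathcal{B}(\Lambda_0)$ implements the reflections on weights---is the standard argument from \cite{kac1994infinite} and correctly lifts the identification from weights to crystal vertices.
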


Rather than redevelop the combinatorics of cores (or abaci) for other affine types---as in~\cite{hanusa2012abacus,beazley2015bijective}---we propose that it might be worthwhile to study the restriction of existing combinatorial models for the crystal $B(\Lambda_0)$ to the $\wa$-orbit of~\Cref{thm:wa_orbit_is_weights}.  For example, in type $\widetilde{A}_n$ we observe that the ``Young wall''  model illustrated for $n=2$ in~\cite[Figure 22]{kang2003crystal} recovers cores.

\section*{Acknowledgements}

We thank D.~Armstrong for helpful conversations and encouragement and P.~Johnson for clarifying certain details of his paper~\cite{johnson2015lattice}.  The second author thanks D.~Stanton for introducing him to cores, M.~Guay-Paquet for a conversation regarding the last equality of~\cite[Theorem 8.16]{macdonald1971affine}, and F.~Saliola for the use of his hardware for certain computations.

%
%

\newcommand{\etalchar}[1]{$^{#1}$}
\providecommand{\bysame}{\leavevmode\hbox to3em{\hrulefill}\thinspace}
\providecommand{\MR}{\relax\ifhmode\unskip\space\fi MR }
\providecommand{\MRhref}[2]{%
  \href{http://www.ams.org/mathscinet-getitem?mr=#1}{#2}
}
\providecommand{\href}[2]{#2}



\begin{thebibliography}{BBDL{\etalchar{+}}12}

\bibitem[AB14]{ardila2014double}
Federico Ardila and Erwan Brugalle, \emph{The double {G}romov-{W}itten
  invariants of {H}irzebruch surfaces are piecewise polynomial}, arXiv preprint
  arXiv:1412.4563 (2014).

\bibitem[Agg14a]{aggarwal2014armstrong}
Amol Aggarwal, \emph{{A}rmstrong's conjecture for $(k, mk+1)$-core partitions},
  arXiv preprint arXiv:1407.5134 (2014).

\bibitem[Agg14b]{aggarwal2014converse}
\bysame, \emph{A converse to vandehey's theorem on simultaneous core
  containment}, arXiv preprint arXiv:1408.0550 (2014).

\bibitem[Agg15]{aggarwal2015does}
\bysame, \emph{When does the set of $(a, b, c) $-core partitions have a unique
  maximal element?}, The Electronic Journal of Combinatorics \textbf{22}
  (2015), no.~2, P2--31.

\bibitem[AHJ14]{armstrong2014results}
Drew Armstrong, Christopher Hanusa, and Brant Jones, \emph{Results and
  conjectures on simultaneous core partitions}, European Journal of
  Combinatorics \textbf{41} (2014), 205--220.

\bibitem[AKS09]{aukerman2009simultaneous}
David Aukerman, Ben Kane, and Lawrence Sze, \emph{On simultaneous
  $s$-cores/$t$-cores}, Discrete Mathematics \textbf{309} (2009), no.~9,
  2712--2720.

\bibitem[AL14]{amdeberhan2014multi}
Tewodros Amdeberhan and Emily Leven, \emph{Multi-cores, posets, and lattice
  paths}, arXiv preprint arXiv:1406.2250 (2014).

\bibitem[Alp14]{alpoge2014self}
Levent Alpoge, \emph{Self-conjugate core partitions and modular forms}, Journal
  of Number Theory \textbf{140} (2014), 60--92.

\bibitem[ALW14]{armstrong2014rational}
Drew Armstrong, Nicholas~A Loehr, and Gregory~S Warrington, \emph{Rational
  parking functions and {C}atalan numbers}, arXiv preprint arXiv:1403.1845
  (2014).

\bibitem[And02]{anderson2002partitions}
Jaclyn Anderson, \emph{Partitions which are simultaneously $t_1$-and
  $t_2$-core}, Discrete Mathematics \textbf{248} (2002), no.~1, 237--243.

\bibitem[Arm15a]{armstrong2011conjecture}
Drew Armstrong, \emph{Rational {C}atalan combinatorics}, 2012 (accessed 12 May,
  2015).

\bibitem[Arm15b]{drewprivate}
\bysame, personal communication, 2015.

\bibitem[ARW13]{armstrong2013rational}
Drew Armstrong, Brendon Rhoades, and Nathan Williams, \emph{Rational
  associahedra and noncrossing partitions}, The Electronic Journal of
  Combinatorics \textbf{20} (2013), no.~3, P54.

\bibitem[Ath05]{athanasiadis2005refinement}
Christos Athanasiadis, \emph{On a refinement of the generalized {C}atalan
  numbers for {W}eyl groups}, Transactions of the American Mathematical Society
  \textbf{357} (2005), no.~1, 179--196.

\bibitem[Bar06]{barvinok2006computing}
Alexander Barvinok, \emph{Computing the {E}hrhart quasi-polynomial of a
  rational simplex}, Mathematics of Computation \textbf{75} (2006), no.~255,
  1449--1466.

\bibitem[BBDL{\etalchar{+}}11]{baldoni2011integrate}
Velleda Baldoni, Nicole Berline, Jesus De~Loera, Matthias K{\"o}ppe, and
  Mich{\`e}le Vergne, \emph{How to integrate a polynomial over a simplex},
  Mathematics of Computation \textbf{80} (2011), no.~273, 297--325.

\bibitem[BBDL{\etalchar{+}}12]{baldoni2012computation}
Velleda Baldoni, Nicole Berline, Jes{\'u}s~A De~Loera, Matthias K{\"o}ppe, and
  Mich{\`e}le Vergne, \emph{Computation of the highest coefficients of weighted
  {E}hrhart quasi-polynomials of rational polyhedra}, Foundations of
  Computational Mathematics \textbf{12} (2012), no.~4, 435--469.

\bibitem[BDF{\etalchar{+}}06]{baldwin2006self}
John Baldwin, Melissa Depweg, Ben Ford, Abraham Kunin, and Lawrence Sze,
  \emph{Self-conjugate $t$-core partitions, sums of squares, and $p$-blocks of
  $a_n$}, Journal of Algebra \textbf{297} (2006), no.~2, 438--452.

\bibitem[BGLX14]{bergeron2014compositional}
Francois Bergeron, Adriano Garsia, Emily Leven, and Guoce Xin,
  \emph{Compositional $(km, kn)$-shuffle conjectures}, arXiv preprint
  arXiv:1404.4616 (2014).

\bibitem[BIS10]{bruns2010normaliz}
Winfried Bruns, Bogdan Ichim, and Christof S{\"o}ger, \emph{Normaliz.
  {A}lgorithms for rational cones and affine monoids}, J. Algebra \textbf{324}
  (2010), 1098--1113.

\bibitem[BK01]{bruns2001normaliz}
Winfried Bruns and Robert Koch, \emph{Normaliz, a program to compute
  normalizations of semigroups}, Available by anonymous ftp from ftp.
  mathematik. Uni-Osnabrueck. DE/pub/osm/kommalg/software (2001).

\bibitem[BNP{\etalchar{+}}15]{beazley2015bijective}
Elizabeth Beazley, Margaret Nichols, Min~Hae Park, XiaoLin Shi, and Alexander
  Youcis, \emph{Bijective projections on parabolic quotients of affine {W}eyl
  groups}, Journal of Algebraic Combinatorics \textbf{41} (2015), no.~4,
  911--948.

\bibitem[BR47]{brauer1947conjecture}
Richard Brauer and Gilbert de~Beauregard Robinson, \emph{On a conjecture by
  {N}akayama}, Royal Society of Canada, 1947.

\bibitem[BR07]{beck2007computing}
Matthias Beck and Sinai Robins, \emph{Computing the continuous discretely:
  {I}nteger-point enumeration in polyhedra}, Springer Science \& Business
  Media, 2007.

\bibitem[BV07]{berline2007local}
Nicole Berline and Michele Vergne, \emph{Local {E}uler-{M}aclaurin formula for
  polytopes}, Mosc. Math. J \textbf{7} (2007), no.~3, 355--386.

\bibitem[CDH15]{ceballos2015combinatorics}
Cesar Ceballos, Tom Denton, and Christopher Hanusa, \emph{Combinatorics of the
  zeta map on rational {D}yck paths}, arXiv preprint arXiv:1504.06383 (2015).

\bibitem[CHW14]{chen2014average}
William Chen, Harry Huang, and Larry Wang, \emph{Average size of a
  self-conjugate $(s, t)$-core partition}, arXiv preprint arXiv:1405.2175
  (2014).

\bibitem[Fan96]{fan1996euler}
C~Kenneth Fan, \emph{Euler characteristic of certain affine flag varieties},
  Transformation Groups \textbf{1} (1996), no.~1-2, 35--39.

\bibitem[Fay11]{fayers2011t}
Matthew Fayers, \emph{The $t$-core of an $s$-core}, Journal of Combinatorial
  Theory, Series A \textbf{118} (2011), no.~5, 1525--1539.

\bibitem[Fay14]{fayers2014generalisation}
\bysame, \emph{A generalisation of core partitions}, Journal of Combinatorial
  Theory, Series A \textbf{127} (2014), 58--84.

\bibitem[Fay15]{fayers2015s}
\bysame, \emph{$(s,t)$-cores: a weighted version of {A}rmstrong's conjecture},
  arXiv preprint arXiv:1504.01681 (2015).

\bibitem[FMS09]{ford2009self}
Ben Ford, Ho{\`a}ng Mai, and Lawrence Sze, \emph{Self-conjugate simultaneous
  $p$-and $q$-core partitions and blocks of $a_n$}, Journal of Number Theory
  \textbf{129} (2009), no.~4, 858--865.

\bibitem[FV10]{fishel2010bijection}
Susanna Fishel and Monica Vazirani, \emph{A bijection between dominant {S}hi
  regions and core partitions}, European Journal of Combinatorics \textbf{31}
  (2010), no.~8, 2087--2101.

\bibitem[GKS90]{garvan1990cranks}
Frank Garvan, Dongsu Kim, and Dennis Stanton, \emph{Cranks and $t$-cores},
  Inventiones Mathematicae \textbf{101} (1990), no.~1, 1--17.

\bibitem[GMV14]{gorsky2014affine}
Eugene Gorsky, Mikhail Mazin, and Monica Vazirani, \emph{Affine permutations
  and rational slope parking functions}, arXiv preprint arXiv:1403.0303 (2014).

\bibitem[Hai94]{haiman1994conjectures}
Mark Haiman, \emph{Conjectures on the quotient ring by diagonal invariants},
  Journal of Algebraic Combinatorics \textbf{3} (1994), no.~1, 17--76.

\bibitem[HJ12]{hanusa2012abacus}
Christopher Hanusa and Brant Jones, \emph{Abacus models for parabolic quotients
  of affine {W}eyl groups}, Journal of Algebra \textbf{361} (2012), 134--162.

\bibitem[HN13]{hanusa2013number}
Christopher Hanusa and Rishi Nath, \emph{The number of self-conjugate core
  partitions}, Journal of Number Theory \textbf{133} (2013), no.~2, 751--768.

\bibitem[Hum72]{humphreys1972introduction}
James Humphreys, \emph{Introduction to {L}ie algebras and representation
  theory}, vol.~9, Springer Science \& Business Media, 1972.

\bibitem[IM65]{iwahori1965some}
Nagayoshi Iwahori and Hideya Matsumoto, \emph{On some {B}ruhat decomposition
  and the structure of the {H}ecke rings of $p$-adic {C}hevalley groups},
  Publications Math{\'e}matiques de l'IH{\'E}S \textbf{25} (1965), no.~1,
  5--48.

\bibitem[JK81]{james1981representation}
Gordon James and Adalbert Kerber, \emph{The representation theory of the
  symmetric group}, 1981.

\bibitem[Joh15]{johnson2015lattice}
Paul Johnson, \emph{Lattice points and simultaneous core partitions}, arXiv
  preprint arXiv:1502.07934 (2015).

\bibitem[Kac94]{kac1994infinite}
Victor~G Kac, \emph{Infinite-dimensional {L}ie algebras}, vol.~44, Cambridge
  university press, 1994.

\bibitem[Kan03]{kang2003crystal}
Seok-Jin Kang, \emph{Crystal bases for quantum affine algebras and
  combinatorics of {Y}oung walls}, Proceedings of the London Mathematical
  Society \textbf{86} (2003), no.~1, 29--69.

\bibitem[Kos76]{kostant1976macdonald}
Bertram Kostant, \emph{On macdonald's $\eta$-function formula, the {L}aplacian
  and generalized exponents}, Advances in Mathematics \textbf{20} (1976),
  no.~2, 179--212.

\bibitem[Las01]{lascoux2001ordering}
Alain Lascoux, \emph{Ordering the affine symmetric group}, Algebraic
  combinatorics and applications, Springer, 2001, pp.~219--231.

\bibitem[LP12]{lam2012alcoved}
Thomas Lam and Alexander Postnikov, \emph{Alcoved polytopes {II}}, arXiv
  preprint arXiv:1202.4015 (2012).

\bibitem[Mac71]{macdonald1971affine}
Ian~G Macdonald, \emph{Affine root systems and {D}edekind's $\eta$-function},
  Inventiones mathematicae \textbf{15} (1971), no.~2, 91--143.

\bibitem[Nak40]{nakayama1940some}
Tadasi Nakayama, \emph{On some modular properties of irreducible
  representations of a symmetric group, {I}, {II}}, Japan. J. Math \textbf{17}
  (1940), 165--184.

\bibitem[Nat08]{nath2008t}
Rishi Nath, \emph{On the $t$-core of an $s$-core partition}, Integers:
  Electronic Journal of Combinatorial Number Theory \textbf{8} (2008), no.~A28,
  A28.

\bibitem[Nat14]{nath2014symmetry}
\bysame, \emph{Symmetry in maximal $(s-1,s+1)$-cores}, arXiv preprint
  arXiv:1411.0339 (2014).

\bibitem[OS07]{olsson2007block}
J{\o}rn Olsson and Dennis Stanton, \emph{Block inclusions and cores of
  partitions}, Aequationes mathematicae \textbf{74} (2007), no.~1-2, 90--110.

\bibitem[Som05]{sommers2005b}
Eric~N Sommers, \emph{$b$-stable ideals in the nilradical of a {B}orel
  subalgebra}, Canadian mathematical bulletin \textbf{48} (2005), no.~3,
  460--472.

\bibitem[ST14]{sulzgruber2014type}
Robin Sulzgruber and Marko Thiel, \emph{Type $c$ parking functions and a zeta
  map}, arXiv preprint arXiv:1411.3885 (2014).

\bibitem[Sut98]{suter1998number}
Ruedi Suter, \emph{The number of lattice points in alcoves and the exponents of
  the finite {W}eyl groups}, Mathematics of computation (1998), 751--758.

\bibitem[SZ13]{stanley2013catalan}
Richard Stanley and Fabrizio Zanello, \emph{The {C}atalan case of {A}rmstrong's
  conjecture on core partitions}, arXiv preprint arXiv:1312.4352 (2013).

\bibitem[Thi15]{thiel2015anderson}
Marko Thiel, \emph{From {A}nderson to {Z}eta: A uniform bijection between {S}hi
  regions and the finite torus}, arXiv preprint arXiv:1504.07363 (2015).

\bibitem[Van08]{vandehey2008general}
Joseph Vandehey, \emph{A general theory of $(s, t)$-core partitions}, Ph.D.
  thesis, University of Oregon, 2008.

\bibitem[{Wol}]{math}
{Wolfram Research, Inc.}, \emph{Mathematica 8.0}.

\bibitem[Xio14]{xiong2014largest}
Huan Xiong, \emph{On the largest size of $(t,t+1,\ldots,t+p)$-core partitions},
  arXiv preprint arXiv:1410.2061 (2014).

\bibitem[YZZ14]{yang2014enumeration}
Jane Yang, Michael Zhong, and Robin Zhou, \emph{On the enumeration of
  $(s,s+1,s+2)$-core partitions}, arXiv preprint arXiv:1406.2583 (2014).

\end{thebibliography}

\end{document}